\newtheorem{theorem}{\bf Theorem}[section]
\newtheorem*{namelesstheorem}{\bf Theorem}
\newtheorem{lemma}[theorem]{\bf Lemma}
\newtheorem{definition}[theorem]{\bf Definition}
\newtheorem{proposition}[theorem]{\bf Proposition}
\newtheorem{corollary}[theorem]{\bf Corollary}
\newtheorem{example}[theorem]{\bf Example}
\newtheorem{remark}[theorem]{\bf Remark}
\renewcommand{\geq}{\geqslant}
\def\cal{\mathcal}
\def\t{{\bf t}}
\def\O{{\bf 0}}
\def\bfPi{{\bf \Pi}}
\def\C{{\mathbb C}}
\def\D{{\mathbb D}}
\def\H{{\mathbb H}}
\def\P{{{\mathcal P}}}
\def\R{{\mathbb R}}
\def\Z{{\mathbb Z}}
\def\aut{{\rm Aut}}
\def\augteich{{\widehat{{\mathcal{T}}}}_{(S,Z)}}
\def\augmod{{\widehat{{\mathcal{M}}}}_{(S,Z)}}
\def\mod{{{\cal{M}}}_{(S,Z)}}
\def \mod{\mathrm{Mod}}
\def \SC{{\bf SC}}
\def\Mor{{\bf Mor}}
\def\Q{\cal{Q}}
\def\S{\cal{S}}
\def\DMC{\overline{\cal{M}}_{g,n}}
\def \Mark#1#2#3#4#5{\mathrm{Mark}_{#1} ^{#2}(#3,#4;{#5})}
\def \Homeo{\mathrm{Homeo}}
\def \Map{\mathrm{Map}}
\def \trimmed{{\mathrm{Trim}}}
\def \mymark{\Mark T \Gamma S Z X}
\title[An analytic construction of the Delinge-Mumford compactification]{An analytic construction of the Deligne-Mumford compactification of the moduli space of curves}
\email{jhh8@cornell.edu}
\address{ %
  Department of Mathematics\\
 Malott Hall\\
  Cornell University \\
  Ithaca New York\\
  14853 }
\thanks{The research of the second author was supported in part by the NSF}
\email{kochs@math.harvard.edu}
\address{ %
  Department of Mathematics\\
 Science Center\\
  Harvard University \\
  Cambridge Massachusetts\\
  02138}
\begin{document}
\maketitle
\begin{center}
{\em for Tamino}
\end{center}

\begin{abstract}
In 1969, P. Deligne and D. Mumford compactified the moduli space of curves $\cal{M}_{g,n}$. Their compactification $\DMC$ is a projective algebraic variety, and as such, it has an underlying analytic structure. Alternatively, the quotient of the augmented Teichm\"uller space by the action of the mapping class group gives a compactification of $\cal{M}_{g,n}$. We put an analytic structure on this quotient and prove that with respect to this structure, the compactification is canonically isomorphic (as an analytic space) to the Deligne-Mumford compactification $\DMC$.
\end{abstract}

\tableofcontents

\section*{Introduction}\label{intro}
Let ${\cal M}_{g,n}$ be the moduli space of curves of genus $g$ with $n$ marked points, where $2-2g-n<0$. In \cite{DM}, P. Deligne and D. Mumford constructed a projective variety which compactifies ${\cal M}_{g,n}$ known as the {\em{Deligne-Mumford compactification}}, denoted as $\DMC$. It has a certain universal property in the algebraic category: it is a {\em{coarse moduli space}} for the {\em{stable curves functor}} (see Section \ref{coarse}). 

One can alternatively consider the moduli space ${\cal M}_{g,n}$ from an analytic point of view in the context of Teichm\"uller theory. Let $S$ be a compact oriented topological surface of genus $g$, and let $Z\subset S$ be a finite set of cardinality $n$. Consider the Teichm\"uller space ${\cal T}_{(S,Z)}$; it is a complex manifold of dimension $3g-3+n$, and the mapping class group $\mod(S,Z)$ acts on it. The action is properly discontinuous but not free in general. The quotient of ${\cal T}_{(S,Z)}$ by this action can be identified with ${\cal M}_{g,n}$; in this way, ${\cal M}_{g,n}$ inherits the structure of a complex orbifold. In \cite{abikoff}, W. Abikoff introduced the {\em{augmented Teichm\"uller space}}, which we denote as $\augteich$. This space is the ordinary Teichm\"uller space ${\cal T}_{(S,Z)}$ with a stratified boundary attached (see Section \ref{ATeichSect}); the augmented Teichm\"uller space has no manifold structure. The mapping class group $\mod(S,Z)$ also acts on $\augteich$, and we define the quotient $\augmod:=\augteich/\mod(S,Z)$ to be the {\em{augmented moduli space}}. This space is a compactification of ${\cal M}_{g,n}$. 

The main question that motivates this article is: how do $\DMC$ and $\augmod$ compare? In \cite{harvey}, W. Harvey proved that they are homeomorphic. We wish to compare these spaces in the analytic category. Since $\DMC$ is a compact algebraic variety, it has an underlying analytic structure. However, the augmented moduli space is a priori just a topological space. It cannot inherit an analytic structure from the augmented Teichm\"uller space since $\augteich$ has no analytic structure. A large part of this work is devoted to endowing the augmented moduli space with an analytic structure, so that with respect to this analytic structure, it is a coarse moduli space for the stable curves functor (in the analytic category). We then prove that as an analytic space, the Deligne-Mumford compactification is also a coarse moduli space for the stable curves functor (in the analytic category), establishing that $\DMC$ and $\augmod$ are canonically isomorphic. 

\subsection{Coarse moduli spaces}\label{coarse} Let {\bf{AnalyticSpaces}} and {\bf{Sets}} denote the category of complex analytic spaces and the category of sets respectively. Consider the functor $\SC_{g,n}:{\bf{AnalyticSpaces}}\to{\bf{Sets}}$  which associates  to an analytic space $A$, the set of isomorphism classes of {\em{flat proper families of stable curves of genus $g$ with $n$ marked points, parametrized by $A$}}. Our principal result is that with respect to the analytic structure we will put on $\augmod$, it is a {\em{coarse moduli space}} in the following sense. 

\begin{namelesstheorem}\label{UnivPropThm}
There exists a natural transformation $\eta:\SC_{g,n}\to\Mor(\bullet,\augmod)$ with the following universal property: for every analytic space $Y$ together with a natural transformation $\eta_Y:\SC_{g,n}\to\Mor(\bullet,Y)$, there exists a unique morphism $F:\augmod\to Y$ such that for all analytic spaces $A$, the following diagram commutes.
\[
\xymatrix{
&							&\Mor(A,\augmod)\ar[dd]^{F_*}		\\
&\SC_{g,n}(A)\ar[ru]^{\eta}\ar[dr]_{\eta_Y}		&								\\
&							&\Mor(A,Y)}
\]						
\end{namelesstheorem}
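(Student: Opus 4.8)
The plan is to take for $\eta$ the assignment to a flat proper family $\mathcal X\to A$ of stable curves of its \emph{classifying map} $A\to\augmod$, $a\mapsto[\mathcal X_a]$, and then to prove the universal property by constructing $F$ chart by chart on $\augmod$ and descending through finite automorphism groups. That each classifying map is a morphism for the analytic structure on $\augmod$, so that $\eta$ indeed lands in $\Mor(\bullet,\augmod)$ and is a natural transformation, is exactly what the earlier sections provide; I will use that structure in its local form, namely that every point of $\augmod$ has a neighborhood of the form $V/\aut(C)$, where $V$ is the base of a versal deformation $\mathcal X_V\to V$ of a stable curve $C$ and $\aut(C)$ is its finite automorphism group acting on $V$, and where the chart map $V\to V/\aut(C)\hookrightarrow\augmod$ is the classifying map $v\mapsto[\mathcal X_v]$. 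I will also use the standard fact from deformation theory that every flat proper family of stable curves is, locally over its base, a pullback of such a versal family.

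\emph{Uniqueness.} Evaluating the square at the one-point analytic space identifies $\SC_{g,n}(\mathrm{pt})$ with the set of isomorphism classes of stable curves, shows $\eta$ sends $C$ to $[C]$, and forces $F([C])=\eta_Y(C)(\mathrm{pt})$ for every $C$; thus the underlying point-map of $F$ is determined. Since $\augmod$ is reduced --- each chart $V/\aut(C)$ being the quotient of a smooth space by a finite group, hence reduced (indeed normal) --- a morphism out of $\augmod$ is determined by its underlying continuous map, so $F$ is unique.

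\emph{Existence.} Define $F$ on points by $F([C]):=\eta_Y(C)(\mathrm{pt})$. On a chart $U=V/\aut(C)\hookrightarrow\augmod$ with versal family $\mathcal X_V\to V$ as above, put $\phi_V:=\eta_Y(\mathcal X_V)\colon V\to Y$. For $\alpha\in\aut(C)$ the $\aut(C)$-equivariance of the versal deformation gives an isomorphism of families $\alpha^*\mathcal X_V\cong\mathcal X_V$ over $V$, so by naturality of $\eta_Y$ we get $\phi_V=\eta_Y(\alpha^*\mathcal X_V)=\phi_V\circ\alpha$; hence $\phi_V$ is $\aut(C)$-invariant. By Cartan's theorem the quotient map $q\colon V\to V/\aut(C)$ realizes $V/\aut(C)$ as an analytic quotient, with $\mathcal O_{V/\aut(C)}=(q_*\mathcal O_V)^{\aut(C)}$, so $\phi_V$ descends to a morphism $\bar\phi_V\colon U\to Y$; naturality applied to the inclusions of points of $V$ shows that $\bar\phi_V$ agrees on $U$ with the point-map $F$. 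As these charts cover $\augmod$, and on overlaps the local morphisms have the same (continuous) underlying map while the overlaps are reduced, they glue to a global morphism $F\colon\augmod\to Y$. Finally the square commutes for arbitrary $A$: given $\mathcal X\to A$, write it locally over $A$ as $g^*\mathcal X_V$ for a morphism $g$ into a versal base; since the classifying map $\eta(\mathcal X_V)$ equals the chart map, which is $q$ followed by the open embedding of $U$, we get locally $F\circ\eta(\mathcal X)=\bar\phi_V\circ q\circ g=\phi_V\circ g=g^*\eta_Y(\mathcal X_V)=\eta_Y(\mathcal X)$, hence globally.

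\emph{Where the work is.} Granting the local structure of $\augmod$ established in the earlier sections, the remaining substance is all about the coarse (not fine) nature of the moduli problem: the point to get right is the $\aut(C)$-equivariance of the versal deformation --- that automorphisms of the central fibre act on $V$ compatibly with an isomorphism $\alpha^*\mathcal X_V\cong\mathcal X_V$ --- because this is precisely what lets $\eta_Y(\mathcal X_V)$ descend through $\aut(C)$; one must also check that the descended maps $\bar\phi_V$ are independent of the chosen versal chart so that they glue. The rest is functorial bookkeeping with pullbacks and with the universal property of analytic quotients by finite groups.
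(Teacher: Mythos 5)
Your proposal is correct and follows essentially the same strategy as the paper's proof: use the local orbifold structure $V/\aut(C)$ of $\augmod$, apply $\eta_Y$ to the local universal (versal) family $\mathcal X_V\to V$ over the chart $V$, descend through $\aut(C)$ by naturality applied to the isomorphisms $\alpha^*\mathcal X_V\cong\mathcal X_V$, and glue; your $V$ is the paper's $\Q_\Gamma$ restricted to a suitable neighborhood, and your ``versal family'' is the paper's universal family $p_\Gamma^{-1}(\widetilde V)\to\widetilde V$. One genuine improvement: you supply an explicit uniqueness argument (underlying point-map forced by evaluating at a point, and $\augmod$ reduced so a morphism is determined by its continuous underlying map), which the paper asserts but does not spell out.
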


\begin{remark}\label{hithere}
{\em{As mentioned above, as an {\em{algebraic}} space the Deligne-Mumford compactification $\DMC$ has the above universal property in the algebraic category. We wish to compare the underlying {\em{analytic}} structure of $\DMC$ with the analytic structure we will put on $\augmod$; $\DMC$ has the structure of a complex orbifold (see \cite{frank}, \cite{salamon}). We will prove the theorem above for the augmented moduli space $\augmod$, and then we will exhibit an analytic isomorphism $\augmod\to\DMC$.}}
\end{remark}
\noindent The analytic structure we will give $\augmod$ comes from an intermediate quotient of $\augteich$ which is a complex analytic manifold; this is the key of our construction. 

The space $\augteich$ is a union of strata $\S_\Gamma$ corresponding to multicurves $\Gamma$ on $S-Z$. For the multicurve $\Gamma\subset S-Z$ define
\[
U_\Gamma:= \bigcup_{\Gamma'\subseteq\Gamma} \S_{\Gamma'}
\]
and denote by $\Delta_\Gamma$ the subgroup of $\mod(S,Z)$ generated by the Dehn twists around elements of $\Gamma$. Then $\Delta_\Gamma$ acts on $U_\Gamma$, and we prove that the quotient $\Q_\Gamma:=U_\Gamma/\Delta_\Gamma$ is a complex manifold.  Moreover, $\Q_\Gamma$ parametrizes a {\em{$\Gamma$-marked flat proper family of stable curves}} (see Section \ref{Gammamarkedfamilysection}), and it is universal for this property.  

\subsection{Outline} We discuss stable curves in Section \ref{StableCurvesSect}, the augmented Teichm\"uller space in Section \ref{ATeichSect}, proper flat families of stable curves in Section \ref{StThmSect}, and an important vector bundle in Section \ref{Q2vectorbundlesection}. We define the notion of $\Gamma$-marking for a proper flat family of stable curves in Section \ref{Gammamarkedfamilysection}, and discuss Fenchel-Nielsen coordinates for families of stable curves in Section \ref{FenchelNielsen}. We use Fenchel-Nielsen coordinates to show that $\Q_\Gamma$ is a topological manifold of dimension $6g-6+2n$ and discuss the $\Gamma$-marked family it parametrizes in Section \ref{QGSect}. We then address the analytic structure of $\Q_\Gamma$ in Section \ref{complexQ}, but this is really a corollary of the discussions in Section \ref{plumbing} and Section \ref{phisect}; these sections along with Section \ref{complexQ} are the heart of the paper. To give $\Q_\Gamma$ a complex structure, we manufacture a map $\Phi:\P_\Gamma\to\Q_\Gamma$ coming from a {\em{plumbing construction}} (where $\P_\Gamma$ is a particular complex manifold) in Section \ref{plumbing}. Both $\Q_\Gamma$ and $\P_\Gamma$ are stratified spaces, where the strata are complex manifolds, and they are indexed by $\Gamma'\subseteq\Gamma$. Proving that $\Phi$ is locally injective is a significant challenge. The sequence of arguments proceeds as follows: 
\begin{itemize}
\item we first prove that $\Phi:\P_\Gamma\to\Q_\Gamma$ is continuous,
\item we then prove that $\Phi$ respects the strata and that the restriction is analytic; that is for all $\Gamma'\subseteq \Gamma$, the restriction $\P_\Gamma^{\Gamma'}\to \Q_\Gamma^{\Gamma'}$ is analytic, and 
\item we ultimately prove that $\Phi$ is a local homeomorphism. This follows from a strata by strata induction argument involving properness, the inverse function theorem applied to the map $\Phi$ restricted to strata of $\P_\Gamma$, and a monodromy computation. 
\end{itemize}
The universal property of $\Q_\Gamma$ is proved in Section \ref{complexQ}, and a description of the cotangent bundle of $\Q_\Gamma$ is given in Section \ref{cotansect}. 

The space $\augmod$ will acquire its analytic structure from  $\Q_\Gamma$ (only locally as we need different $\Gamma$'s in different places).  Its local structure is especially nice (for an analytic space): the space is locally isomorphic to a quotient of a subset of $\C^n$ by the action of a finite group (that is, it is a complex orbifold). The universal property of $\augmod$ is proved in Section \ref{MainThmSect}. Finally, in Section \ref{DMC}, we obtain an isomorphism between $\augmod$ and $\DMC$ in the category of analytic spaces. 

We conclude with an appendix explaining how our complex structure on $\augmod$ relates to that obtained by C. Earle and A. Marden in \cite{cliff}. 

\subsection*{The quotient $\augmod$: a bit of history} 
The space $\augmod$ was first introduced by W. Abikoff  \cite{abikoff4}, \cite{abikoff1}, \cite{abikoff2}, \cite{abikoff}. Over the past 40 years, many mathematicians have studied degenerating families of Riemann surfaces in the context of augmented Teichm\"uller space and augmented moduli space; among them are: L. Bers \cite{bers2} and \cite{bers1}, V. Braungardt \cite{germanguy}, E. Arbarello, M. Cornalba, and P. Griffiths \cite{ACG2}, C. Earle and A. Marden \cite{cliff}, J. Harris and I. Morrison \cite{modcurves}, W. Harvey \cite{harvey}, V. Hinich and A. Vaintrob \cite{hinich}, F. Herrlich \cite{frank}, F. Herrlich and G. Schmith\"uesen \cite{handbook}, I. Kra \cite{kra}, H. Masur \cite{masur}, J. Robbin and D. Salamon \cite{salamon}, M. Wolf and S. Wolpert \cite{wolfwolpert}, and S. Wolpert \cite{scott82}, \cite{scott83}, \cite{scott90}, \cite{scott92}, \cite{scott03}, \cite{scott08}, \cite{scott09}, \cite{scott10a}, \cite{scott10b}. 

In \cite{harvey}, W. Harvey proved that the Deligne-Mumford compactification $\DMC$ and the augmented moduli space $\augmod$ are homeomorphic. In \cite{germanguy}, V. Braungardt  proved that in the category of locally ringed spaces, the Deligne-Mumford compactification $\DMC$ and the augmented moduli space are isomorphic; this construction was repeated in \cite{handbook} by F. Herrlich and G. Schmith\"uesen. Specifically, the authors begin with $\DMC$ as a locally ringed space and consider normal ramified covers $X\to X/G\approx\DMC$. Braungardt showed that among these covers is a universal one, $\overline{\cal{T}}_{g,n}$, which is a locally ringed space. It is proved in \cite{germanguy} and \cite{handbook} that this space $\overline{\cal{T}}_{g,n}$ is homeomorphic to the augmented Teichm\"uller space, and this homeomorphism identifies the group $G$ with the mapping class group. The book \cite{ACG2} is an excellent comprehensive resource which contains current algebro-geometric and analytic results about $\DMC$. 

\subsection*{Acknowledgements} We thank C. McMullen, C. Earle, A. Marden, A. Epstein, G. Muller, F. Herrlich, D. Testa, A. Knutson, and O. Antol\'in-Camarena for many useful discussions. Thanks to X. Buff for the proof of Lemma \ref{xavier}. And special thanks to S. Wolpert for sharing his valuable insights and helpful comments on an early version of this manuscript. 

\section{Stable curves}\label{StableCurvesSect}

A {\it curve\/} $X$ is a reduced 1-dimensional analytic space. A point $x\in X$ is an {\it ordinary double point} if it has a neighborhood in $X$ isomorphic to a neighborhood of the origin in the curve of equation $xy=0$ in $\C^2$. We will call such points {\it nodes\/}. 

\begin{definition}Suppose that $X$ is a connected compact curve, whose singularities are all nodes. Denote by $N$ the set of nodes, and choose $Z\subset X$  some finite set of smooth points, of cardinality $|Z|$. Then $(X,Z)$ is called a {\em stable curve} if all the components of $X-Z-N$ are hyperbolic Riemann surfaces.
\end{definition}

\begin{proposition}\label{genusprop}
If $(X,Z)$ is a stable curve, then the hyperbolic area  of $X- Z-N$ is given by
\[
\mathrm{Area}(X- Z-N)= 2\pi\Bigl(2 \dim H^1(X, {\cal{O}}_X)-2+|Z|\Bigr).
\]
\end{proposition}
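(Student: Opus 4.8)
The plan is to reduce the statement to the classical Gauss--Bonnet theorem for the hyperbolic metric, applied component by component. Write $X - Z - N = \bigsqcup_{i=1}^k Y_i$, where each $Y_i$ is, by the definition of a stable curve, a hyperbolic Riemann surface of finite type, say of genus $g_i$ with $p_i$ punctures (the punctures coming from the points of $Z$ lying on that component, together with the branches of the nodes meeting it). Gauss--Bonnet for a complete hyperbolic surface of finite type gives $\mathrm{Area}(Y_i) = 2\pi(2g_i - 2 + p_i)$, so summing yields $\mathrm{Area}(X - Z - N) = 2\pi\bigl(\sum_i (2g_i - 2) + \sum_i p_i\bigr)$. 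Now $\sum_i p_i = |Z| + 2|N|$, since each node contributes two punctures (one on each branch, possibly on the same component). So the task is to show
\[
\sum_{i=1}^k (2g_i - 2) + 2|N| = 2\dim H^1(X, {\cal O}_X) - 2,
\]
i.e. that $\dim H^1(X,{\cal O}_X) = \sum_i g_i + |N| - k + 1$, which is the standard formula for the arithmetic genus of a nodal curve.

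First I would establish this genus formula via the normalization. Let $\nu : \widetilde X \to X$ be the normalization; then $\widetilde X = \bigsqcup_i \widetilde Y_i$ where $\widetilde Y_i$ is the smooth compact Riemann surface of genus $g_i$ obtained by filling in the punctures of $Y_i$. There is a short exact sequence of sheaves on $X$,
\[
0 \to {\cal O}_X \to \nu_* {\cal O}_{\widetilde X} \to \bigoplus_{x \in N} \C_x \to 0,
\]
where the skyscraper term records, at each node, the difference between a single value and the pair of values on the two branches. Taking the long exact sequence in cohomology, using $H^0(\widetilde X, {\cal O}_{\widetilde X}) = \C^k$, $H^0(X,{\cal O}_X) = \C$ (as $X$ is connected), $H^1(\nu_*{\cal O}_{\widetilde X}) = H^1(\widetilde X, {\cal O}_{\widetilde X}) = \C^{\sum_i g_i}$, and $H^1$ of a skyscraper sheaf vanishes, gives
\[
0 \to \C \to \C^k \to \C^{|N|} \to H^1(X,{\cal O}_X) \to \C^{\sum_i g_i} \to 0,
\]
whence $\dim H^1(X, {\cal O}_X) = \sum_i g_i + |N| - k + 1$ by the alternating sum of dimensions.

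Combining the two displays finishes the computation: $2\dim H^1(X,{\cal O}_X) - 2 + |Z| = 2\sum_i g_i + 2|N| - 2k + |Z| = \sum_i(2g_i - 2) + (2|N| + |Z|) = \sum_i(2g_i-2) + \sum_i p_i = \frac{1}{2\pi}\mathrm{Area}(X - Z - N)$, as desired. The only genuinely delicate point is the input from hyperbolic geometry: one must know that each component $Y_i$, being a hyperbolic Riemann surface of finite type, carries a complete finite-area hyperbolic metric to which Gauss--Bonnet applies with the stated area $2\pi(2g_i - 2 + p_i)$ — equivalently that $2 - 2g_i - p_i < 0$ for every component, which is exactly the content of the stability hypothesis. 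Everything else is bookkeeping with the normalization exact sequence, and I expect no real obstacle there; the main thing to be careful about is correctly counting the punctures of each $Y_i$, making sure that a node both of whose branches lie on the same component still contributes two punctures to that component.
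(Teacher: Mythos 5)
Your proof is correct and reaches the same formula by a closely related but genuinely different cohomological route. Both arguments pass through the normalization $\nu\colon \widetilde X\to X$, reduce the area to Gauss--Bonnet on $\widetilde X-\widetilde Z-\widetilde N$, and then compare Euler characteristics of sheaves; the difference lies in the choice of exact sequence. The paper works with the ideal sheaf of the nodes: it uses $0\to {\cal O}_X(-N)\to{\cal O}_X\to \bigoplus_{x\in N}\C_x\to 0$, the analogous sequence on $\widetilde X$, and Leray's theorem to identify $\chi\bigl({\cal O}_X(-N)\bigr)$ with $\chi\bigl({\cal O}_{\widetilde X}(-\widetilde N)\bigr)$ --- three pieces in all. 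You instead use the single normalization sequence $0\to{\cal O}_X\to \nu_*{\cal O}_{\widetilde X}\to\bigoplus_{x\in N}\C_x\to 0$ together with $H^i(X,\nu_*{\cal O}_{\widetilde X})\cong H^i(\widetilde X,{\cal O}_{\widetilde X})$ (valid because $\nu$ is finite), arriving directly at $\dim H^1(X,{\cal O}_X)=\sum_i g_i+|N|-k+1$. Your route is the more standard textbook computation of the arithmetic genus of a nodal curve and is somewhat more economical; the paper's route works entirely with $\chi$ and the ideal sheaf and never needs to name the number $k$ of components. A smaller stylistic difference is that you invoke Gauss--Bonnet component by component via $(g_i,p_i)$, while the paper packages the equivalent statement as Lemma~\ref{eulerlemma} in terms of $\chi({\cal O}_Y)$ for the whole (possibly disconnected) normalization. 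Your flag that a node whose two branches lie on the same component still contributes two punctures to that component is exactly the right thing to be careful about, and you account for it correctly.
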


The number  $\dim H^1(X, {\cal{O}}_X)$ is called the {\em arithmetic genus} of the curve; the proposition above says it could just as well have been defined in terms the quantities $\mathrm{Area}(X- Z-N)$ and $|Z|$. The {\em{geometric genus}} of the curve is the genus of the normalization $\widetilde X$. 

\begin{figure}[h] 
   \centering
   \includegraphics[width=5in]{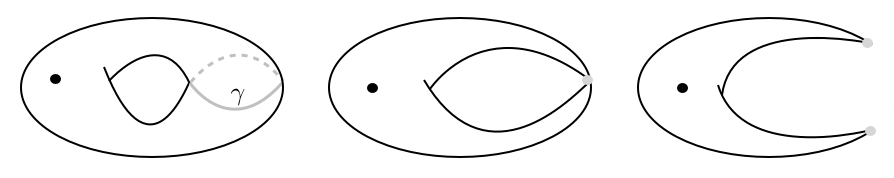} 
   \caption{On the left is a torus with one marked point, and a multicurve $\Gamma=\{\gamma\}$ drawn in grey. In the center is a stable curve obtained from the torus by collapsing $\gamma$ to the grey node. On the right is the normalization of the stable curve in the center; it is a sphere with three marked points, where the black point comes from the marked point on the torus, and the two grey marked points come from separating the node of the stable curve. The arithmetic genus of the stable curve in the middle is $1$, while its geometric genus is $0$.}
   \label{genuspic}
\end{figure}

\begin{proof} 
We require the following lemma for the proof of Proposition \ref{genusprop}.  
\begin{lemma}\label{eulerlemma}
Let $Y$ be a compact, (not necessarily connected) Riemann surface, and let $P\subset Y$ be finite, with $Y-P$ hyperbolic. Then 
\[
\mathrm{Area}(Y-P)=-\int_{Y-P} \kappa\; {\mathrm{d}}A= -2\pi\chi(Y-P)=-2\pi\bigl(\chi(Y)-|P|\bigr) = -2\pi\bigl(2\chi({\cal{O}}_Y)-|P|\bigr)
\]
where $\kappa=-1$ is the curvature. 
\end{lemma}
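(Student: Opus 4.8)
The plan is to verify the chain of equalities one link at a time; the only substantive link is the Gauss--Bonnet identity in the middle.

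The first equality $\mathrm{Area}(Y-P)=-\int_{Y-P}\kappa\,\mathrm{d}A$ is immediate, since by hypothesis the hyperbolic metric has constant curvature $\kappa\equiv -1$, so $-\int\kappa\,\mathrm{d}A=\int 1\,\mathrm{d}A=\mathrm{Area}(Y-P)$. For the middle equality $-\int_{Y-P}\kappa\,\mathrm{d}A=-2\pi\chi(Y-P)$ I would apply the Gauss--Bonnet theorem, with care because $Y-P$ is noncompact: the completeness of the finite-volume hyperbolic metric produces a cusp at each point of $P$ (the hypothesis that $Y-P$ is hyperbolic guarantees every component has negative Euler characteristic, so this picture is valid). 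I would exhaust $Y-P$ by compact subsurfaces $Y_\varepsilon$ obtained by deleting an embedded horocyclic cusp neighborhood of ``length $\varepsilon$'' around each point of $P$. Each $Y_\varepsilon$ is a compact surface with boundary, $\chi(Y_\varepsilon)=\chi(Y-P)$, and classical Gauss--Bonnet gives $\int_{Y_\varepsilon}\kappa\,\mathrm{d}A+\int_{\partial Y_\varepsilon}\kappa_g\,\mathrm{d}s=2\pi\chi(Y_\varepsilon)$. Each boundary component is a horocycle, whose geodesic curvature is $1$ and whose length is $\varepsilon$, so the boundary term equals $|P|\cdot\varepsilon\to 0$; meanwhile $\int_{Y_\varepsilon}\kappa\,\mathrm{d}A\to\int_{Y-P}\kappa\,\mathrm{d}A$ because the total area is finite. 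Letting $\varepsilon\to 0$ gives the claim. (Alternatively one can avoid the limiting argument by decomposing $Y-P$ into hyperbolic pairs of pants, using that each pair of pants has area exactly $2\pi=-2\pi\chi$, a consequence of Gauss--Bonnet applied to the two right-angled hexagons it is assembled from, together with the fact that a pants decomposition of a surface of Euler characteristic $\chi$ has exactly $-\chi$ pants.)

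The third equality $\chi(Y-P)=\chi(Y)-|P|$ is elementary topology: in a CW structure on $Y$ having the points of $P$ as $0$-cells, deleting those $|P|$ vertices lowers the Euler characteristic by $|P|$. Finally, for $\chi(Y)=2\chi(\mathcal{O}_Y)$ I would reduce to a single connected component, where $Y$ is a smooth compact Riemann surface of some genus $g$: then $\chi(Y)=2-2g$ from the standard triangulation count, while $\chi(\mathcal{O}_Y)=\dim H^0(Y,\mathcal{O}_Y)-\dim H^1(Y,\mathcal{O}_Y)=1-g$ (the first term by compactness and connectedness, the second being the definition of the genus, or via Riemann--Roch and Serre duality). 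Summing $\chi(Y)=\sum(2-2g_i)=2\sum(1-g_i)=2\chi(\mathcal{O}_Y)$ over components gives the identity in the disconnected case.

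The main obstacle is the middle step: making the noncompact Gauss--Bonnet argument rigorous, i.e. simultaneously controlling the geodesic-curvature boundary contribution and the convergence of the area integral near the cusps. Both are dispatched by the explicit model $\{z:\mathrm{Im}\,z>1\}/(z\mapsto z+1)$ of a cusp, in which horocycles, their lengths, their geodesic curvatures, and the finiteness of the cusp area are all computed directly; with that model in hand, the remainder of the lemma is bookkeeping.
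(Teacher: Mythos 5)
Your proof is correct and follows essentially the same route as the paper: curvature $\equiv -1$ for the first equality, Gauss--Bonnet applied to the cusped surface by cutting off each cusp along a short horocycle of geodesic curvature $1$ (so the boundary term vanishes in the limit), the elementary puncture count for the third equality, and $\chi(Y)=2\chi(\mathcal{O}_Y)$ via $\chi(\mathcal{O}_Y)=1-g$ per component for the last. The only cosmetic difference is that the paper cites Riemann--Roch for the final identity where you compute $h^0$ and $h^1$ directly.
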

\begin{proof}
The first equality is due to the fact that $Y-P$ is hyperbolic, so $\kappa= -1$; the second equality is the Gauss-Bonnet theorem, the third equality comes from the fact that the Euler characteristic of a surface with a point removed is equal to the Euler characteristic of the original surface minus 1, and the fourth equality is a consequence of the Riemann-Roch theorem (see Proposition A10.1.1 in \cite{teichbook}, for example). 

Note that the Gauss-Bonnet theorem applies to compact surfaces with boundary, and we have applied it to Riemann surfaces with cusps; we can cut off a cusp by an arbitrary short horocycle of geodesic curvature 1, so the integral of the geodesic curvature over the horocycle tends to 0 as the cut-off tends to the cusp, thus in the limit, the formula applies to such surfaces. 
\end{proof}

We now prove Proposition \ref{genusprop}. Denote by $\pi:\widetilde X\to X$ the normalization of $X$. It is a standard fact from analytic geometry that if $X$ is a reduced curve, then its normalization $\widetilde X$ is smooth. We will write $\widetilde N=\pi^{-1}(N)$ and $\widetilde Z=\pi^{-1}(Z)$. 
In our particular case, the singularities  are ordinary double points, and the normalization $\widetilde X$ just consists of separating them. 
Thus the natural map $\widetilde X-\widetilde Z-\widetilde N\to X-Z-N$ is an isomorphism, so
\[
\text{Area}(X-Z-N)=\text{Area}\Bigl(\widetilde X-\widetilde Z-\widetilde N\Bigr),
\]
and by Lemma \ref{eulerlemma}, we have 
\[
\text{Area}\Bigl(\widetilde X-\widetilde Z-\widetilde N\Bigr)=-2\pi\Bigl(2\chi({\cal{O}}_{\widetilde X})-|Z|-2|N|\Bigr);
\]
note that $|\widetilde N|=2|N|$, and $|\widetilde Z|=|Z|$. 

The short exact sequence of sheaves 
\[
0 \longrightarrow {\cal{O}}_X(-N) \longrightarrow {\cal{O}}_X \longrightarrow \bigoplus_{x\in N}  \C_x \longrightarrow 0
\]
leads to the long exact sequence of cohomology groups
\begin{align*}
0 & \longrightarrow H^0\bigl(X, {\cal{O}}_X(-N)\bigr)  \longrightarrow H^0(X, {\cal{O}}_X)  \longrightarrow \bigoplus_{x\in N}  \C_x\\
&\longrightarrow H^1\bigl(X, {\cal{O}}_X(-N)\bigr)  \longrightarrow H^1(X, {\cal{O}}_X) \longrightarrow 0,
\end{align*}   
so we find 
\[
\chi({\cal{O}}_X)=\chi\bigl({\cal{O}}_X(-N)\bigr)+|N|,
\]
by taking alternating sums of the dimension. 

 For every open set $U\subseteq X$, 
\[
\pi^*:{\cal{O}}_X(-N)(U) \longrightarrow {\cal{O}}_{\widetilde X}\bigl(-\widetilde N\bigr)\bigl(\pi^{-1}(U)\bigr)
\]
 is an isomorphism. Using the \v{C}ech construction of cohomology, we would now like to conclude that 
 \begin{equation}\label{isocohom}
\pi^*:H^i\Bigl(X,{\cal{O}}_X(-N)\Bigr)\longrightarrow H^i\Bigl(\widetilde X, {\cal{O}}_{\widetilde X}\bigl(-{\widetilde N}\bigr)\Bigr)
\end{equation} 
is an isomorphism. However, this is not quite true. Using the fact that noncompact open sets are cohomologically trivial, the isomorphism in Line \ref{isocohom} follows from Leray's theorem (see Theorem A7.2.6 in \cite{teichbook}), and this isomorphism implies 
\[
\chi\bigl({\cal{O}}_X(-N)\bigr)=\chi\Bigl({\cal{O}}_{\widetilde X} \bigl(-\widetilde N\bigr)\Bigr).
\]
The exact sequence
\[
0 \longrightarrow {\cal{O}}_{\widetilde X} (-N) \longrightarrow {\cal{O}}_{\widetilde X} \longrightarrow \bigoplus_{x\in N}  \C_x \longrightarrow 0
\]
gives 
\[
\chi({\cal{O}}_{\widetilde X})=\chi\Bigl({\cal{O}}_{\widetilde X}\bigl(-\widetilde N\bigr)\Bigr)+2|N|.
\]
Putting everything together, we have the following string of equalities:
\begin{align*}
\text{Area}(X-Z-N)&=-2\pi\Bigl(2\chi({\cal{O}}_{\widetilde X})-|Z|-2|N|\Bigr)\\
&=-2\pi\Bigl(2\chi\bigl({\cal{O}}_{\widetilde X}(-\widetilde N)\bigr)-|Z|+2|N|\Bigr)\\
&=-2\pi\Bigl(2\chi\bigl({\cal{O}}_X(-N)\bigr)-|Z|+2|N|\Bigr)\\
&=-2\pi\Bigl(2\chi({\cal{O}}_X)-2|N|-|Z|+2|N|\Bigr)\\
&=-2\pi\Bigl(2-2\dim\bigl(H^1(X,{\cal{O}}_X)\bigr)-|Z|\Bigr),
\end{align*}
and the proposition is proved. Note that the case where $N=\emptyset$ corresponds exactly to the statement of Lemma \ref{eulerlemma}. 
\end{proof}

Let $S$ be a compact oriented topological surface of genus $g$, and let $Z\subset S$ be finite.

\begin{definition}
Let $\Gamma=\{\gamma_1,\ldots,\gamma_n\}$ be a set of simple closed curves on $S-Z$, which are pairwise disjoint. The set $\Gamma$ is a {\em{multicurve on $S-Z$}} if for all $i\in[1,n]$, $\gamma_i$ is not homotopic to $\gamma_j$ for $j\neq i$, and every component of $S-\gamma_i$ which is a disk contains at least two points of $Z$. 
\end{definition}
We now introduce some notation. The multicurve $\Gamma=\{\gamma_1,\ldots,\gamma_n\}$ is a set of curves on $S-Z$. To refer to the corresponding subset of $S-Z$, we use the notation
\begin{equation}\label{notation}
[\Gamma]=\bigcup_{i=1}^n \gamma_i\subset S-Z.
\end{equation}
We say that the multicurve $\Gamma$ is contained in the multicurve $\Delta$ if every $\gamma\in\Gamma$ is homotopic in $S-Z$ to a curve $\delta\in\Delta$, and we write $\Gamma\subseteq\Delta$. 

The multicurve $\Gamma$ is maximal if $\Gamma\subseteq\Delta$ implies that $\Gamma=\Delta$. 

\begin{proposition}\label{count}
A multicurve $\Gamma$ on $S-Z$ is maximal if and only if the multicurve $\Gamma$ has $3g-3+|Z|$ components. 
\end{proposition}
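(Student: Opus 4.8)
The plan is to prove the two directions by first establishing the sharp inequality ``every multicurve on $S-Z$ has at most $3g-3+|Z|$ components'' together with a precise description of the equality case, and then checking that the multicurves realizing equality are exactly the maximal ones.

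\textbf{Cutting and Euler characteristics.} First I would cut $S-Z$ along $[\Gamma]$, obtaining a (possibly disconnected) surface with boundary $S_\Gamma$, still carrying the punctures coming from $Z$. Write its connected components as $\Sigma_1,\dots,\Sigma_m$, where $\Sigma_k$ has genus $h_k$ and $b_k$ \emph{ends}, an end being either a boundary circle (one of the two copies of some $\gamma_i$) or a puncture (one of the points of $Z$). Cutting along an embedded circle does not change the Euler characteristic, so $\sum_k\chi(\Sigma_k)=\chi(S-Z)=2-2g-|Z|$; and since each $\gamma_i$ yields two ends while each point of $Z$ yields one, $\sum_k b_k=2|\Gamma|+|Z|$. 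Substituting $\chi(\Sigma_k)=2-2h_k-b_k$ and simplifying gives the identity $|\Gamma|=(m+g-1)-\sum_k h_k$.

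\textbf{Every complementary piece has negative Euler characteristic.} The core of the counting step is to show $\chi(\Sigma_k)\le -1$ for every $k$; this is where the axioms defining a multicurve are used. No $\Sigma_k$ can be a disk (else the bounding $\gamma_i$ would be null-homotopic in $S-Z$, so a component of $S-\gamma_i$ would be a disk containing no point of $Z$), nor a once-punctured disk (else a component of $S-\gamma_i$ would be a disk containing a single point of $Z$), nor an annulus with both boundary circles in $[\Gamma]$ (distinct $\gamma_i,\gamma_j$ would be freely homotopic in $S-Z$, against pairwise non-homotopy; and if the two circles were copies of one $\gamma_i$, then $S-Z$, being connected, would be a torus, contradicting $\chi(S-Z)<0$). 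A closed component, or a sphere with at most two ends, is likewise ruled out using $\chi(S-Z)<0$ and the connectedness of $S-Z$. Hence $\chi(\Sigma_k)\le -1$ for all $k$, so $-m\ge\sum_k\chi(\Sigma_k)=2-2g-|Z|$, i.e.\ $m\le 2g-2+|Z|$, and therefore $|\Gamma|=(m+g-1)-\sum_k h_k\le 3g-3+|Z|$. Tracing the inequalities shows equality holds if and only if $h_k=0$ and $\chi(\Sigma_k)=-1$ for every $k$, i.e.\ if and only if $S_\Gamma$ is a disjoint union of three-holed spheres (pairs of pants).

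\textbf{Maximal if and only if a pants decomposition.} Finally I would show that $\Gamma$ is maximal exactly when $S_\Gamma$ is a union of pairs of pants. If it is not, some $\Sigma_k$ has $\chi(\Sigma_k)\le -2$, and by the classification of surfaces $\Sigma_k$ contains a simple closed curve $\delta$ that is essential and non-peripheral in $\Sigma_k$. Since the complementary components of a multicurve are incompressible and not annular in $S-Z$, such a $\delta$ is not null-homotopic in $S-Z$, bounds no disk containing at most one point of $Z$, and is not freely homotopic to any component of $[\Gamma]$; it is also disjoint from $[\Gamma]$. Thus $\Gamma\cup\{\delta\}$ is a multicurve strictly containing $\Gamma$, so $\Gamma$ is not maximal. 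Conversely, if $S_\Gamma$ is a union of pairs of pants then $|\Gamma|=3g-3+|Z|$ by the previous step, and no multicurve can strictly contain $\Gamma$, since that would force more than $3g-3+|Z|$ components, against the inequality; so $\Gamma$ is maximal. Combining, $\Gamma$ is maximal $\iff$ $S_\Gamma$ is a union of pairs of pants $\iff$ $|\Gamma|=3g-3+|Z|$. The Euler-characteristic bookkeeping here is routine; the only genuine content is the case analysis (driven entirely by the multicurve axioms and $\chi(S-Z)<0$) excluding complementary pieces of non-negative Euler characteristic, and the incompressibility of those pieces, which is what ensures an essential non-peripheral curve in a piece remains essential and genuinely new in $S-Z$ — both standard facts of surface topology that may be quoted from \cite{teichbook} if desired.
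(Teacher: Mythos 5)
Your proof is correct and is exactly the ``quick Euler characteristic computation'' that the paper invokes without supplying details. The identity $|\Gamma|=(m+g-1)-\sum_k h_k$, the case analysis using the multicurve axioms and $\chi(S-Z)<0$ to force $\chi(\Sigma_k)\le-1$, and the observation that equality pins down a pants decomposition are all sound, and the appeal to incompressibility of complementary subsurfaces (to guarantee a new essential non-peripheral curve in any non-pants piece, hence non-maximality) is the standard surface-topology fact the argument needs.
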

\begin{proof}
This result is standard and follows from a quick Euler characteristic computation. 
\end{proof}

We will denote by $S/\Gamma$ the topological space obtained by collapsing the elements of $\Gamma$ to points. 

\begin{definition}
 A {\em marking for a stable curve $(X,Z_X)$ by $(S,Z)$} is a continuous map $\phi:S\to X$ such that 
\begin{itemize}
\item $\phi(Z)=Z_X$, and
\item there exists a multicurve $\Gamma\subset (S,Z)$ such that $\phi$ induces an orientation-preserving homeomorphism $\phi_*:(S/\Gamma,Z)\to (X,Z_X)$.
\end{itemize}
\end{definition}
We will sometimes refer to $\phi:S\to X$ as a $\Gamma$-marking of the stable curve $(X,Z_X)$ by $(S,Z)$, when we wish to emphasize the multicurve $\Gamma$ that was collapsed. 

\begin{figure}[h] 
   \centering
   \includegraphics[width=5in]{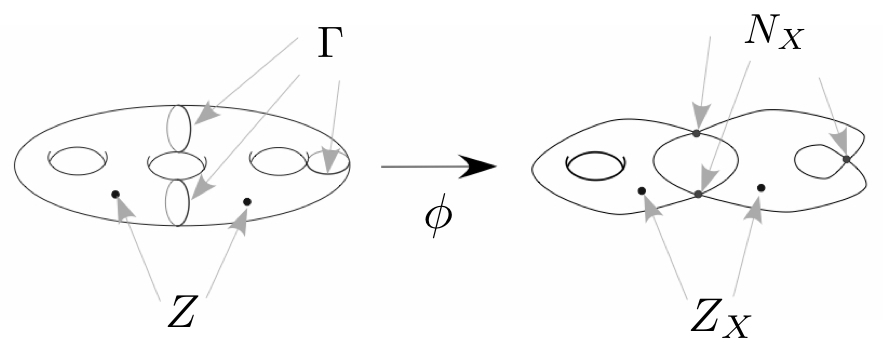} 
   \caption{On the left is the topological surface $S$ with two marked points in the set $Z$. There is a multicurve $\Gamma$ drawn on $S-Z$. On the right is the stable curve $X$ with two marked points in the set $Z_X$, and three nodes in the set $N_X$. The marking $\phi:(S,Z)\to (X,Z_X)$ collapses the curves of $\Gamma$ to the points of $N_X$.}
   \label{squish}
\end{figure}

\begin{remark}\label{familymarking}
{\em{We will define a $\Gamma$-marking of a family of stable curves in Section \ref{Gammamarkedfamilysection}. It is essential to realize that this is NOT a family of $\Gamma$-markings.}}
\end{remark}

\begin{proposition}
Let $\phi$ be a marking of $(X,Z_X)$ by $(S,Z)$ as defined above. Then the topological genus of $S$ is equal to the arithmetic genus of $X$. 
\end{proposition}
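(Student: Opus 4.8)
The plan is to compute both genera and match them via the Euler characteristic bookkeeping already established in Proposition \ref{genusprop}. Let $\Gamma$ be the multicurve that is collapsed by the marking $\phi$, so that $\phi_*\colon(S/\Gamma,Z)\to(X,Z_X)$ is an orientation-preserving homeomorphism. Write $N=N_X$ for the set of nodes of $X$; since collapsing each curve $\gamma_i\in\Gamma$ produces exactly one node and distinct curves are non-homotopic hence give distinct nodes, we have $|N|=|\Gamma|$, say $|\Gamma|=k$. The normalization $\widetilde X$ is obtained from $X$ by separating the nodes, which topologically is the same as cutting $S$ along each $\gamma_i$ and filling in the resulting boundary circles with disks (punctured at the points $\widetilde N$). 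Thus $\widetilde X$ is a closed (possibly disconnected) surface obtained from the closed surface $S$ by $k$ surgeries, each cutting one circle and capping the two new boundary components with two disks.

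First I would track the topological Euler characteristic through this surgery. Cutting $S$ along a single curve $\gamma$ and capping off with two disks changes $\chi$ by $+2$: removing a regular-neighborhood annulus (with $\chi=0$) leaves a surface with two boundary circles, and gluing on two disks adds $2$ to the Euler characteristic. Doing this for all $k$ curves gives $\chi(\widetilde X)=\chi(S)+2k$. On the analytic side, $\chi(\widetilde X)=2\chi(\mathcal{O}_{\widetilde X})$ by Riemann--Roch (the fourth equality in Lemma \ref{eulerlemma}), and the relation between the arithmetic genus $p_a:=\dim H^1(X,\mathcal{O}_X)$ and $\chi(\mathcal{O}_{\widetilde X})$ is exactly the one extracted inside the proof of Proposition \ref{genusprop}: combining $\chi(\mathcal{O}_X)=\chi(\mathcal{O}_X(-N))+|N|$, the isomorphism $\chi(\mathcal{O}_X(-N))=\chi(\mathcal{O}_{\widetilde X}(-\widetilde N))$, and $\chi(\mathcal{O}_{\widetilde X})=\chi(\mathcal{O}_{\widetilde X}(-\widetilde N))+2|N|$ yields $\chi(\mathcal{O}_{\widetilde X})=\chi(\mathcal{O}_X)+|N|=\chi(\mathcal{O}_X)+k$. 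Since $\chi(\mathcal{O}_X)=1-p_a$, we get $\chi(\widetilde X)=2(1-p_a)+2k=2-2p_a+2k$.

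Putting the two computations together, $\chi(S)+2k=2-2p_a+2k$, so $\chi(S)=2-2p_a$, i.e. the topological genus of $S$ equals $p_a=\dim H^1(X,\mathcal{O}_X)$, which is the arithmetic genus of $X$. (Alternatively, one can bypass $\widetilde X$ entirely: $S/\Gamma$ is homotopy equivalent to $X$ with each node's two branches identified, and one checks directly that $\chi(S/\Gamma)=\chi(S)+k$ while $\chi(X)=\chi(\widetilde X)-k = 2-2p_a-k$ combined with $\chi(S/\Gamma)=\chi(X)$ gives the same conclusion; but routing through Lemma \ref{eulerlemma} keeps the argument self-contained with what is already proved.)

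The one point that needs genuine care — and which I expect to be the main obstacle — is the surgery count $\chi(\widetilde X)=\chi(S)+2k$, or equivalently $|N|=|\Gamma|$ together with the fact that each collapsed curve contributes a single node whose normalization separates it into two distinct points. This requires knowing that the curves $\gamma_i$ are pairwise non-homotopic and essential (part of the definition of a multicurve, so no two of them collapse to the same node and none collapses to a smooth point), and that the homeomorphism $\phi_*$ on $S/\Gamma$ identifies the image of $[\Gamma]$ precisely with $N_X$. Once the combinatorics of "which circle goes to which node" is pinned down, the Euler characteristic arithmetic is routine and the proposition follows.
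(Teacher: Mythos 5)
Your proof is correct, but it takes a genuinely different route from the paper's. The paper's proof invokes the {\em statement} of Proposition \ref{genusprop}: it completes $\Gamma$ to a maximal multicurve, decomposes $X-Z_X-N_X$ into $2g-2+|Z|$ cusped hyperbolic pairs of pants each of area $2\pi$, computes $\mathrm{Area}(X-Z_X-N_X)=2\pi(2g-2+|Z|)$, and equates this with the area formula of Proposition \ref{genusprop} to read off $g=\dim H^1(X,\mathcal{O}_X)$. You instead bypass the hyperbolic area entirely: you track topological Euler characteristics through the surgery $S\rightsquigarrow\widetilde S^\Gamma\cong\widetilde X$ and match them against the purely sheaf-theoretic identities $\chi(\mathcal{O}_X)=\chi(\mathcal{O}_X(-N))+|N|$, $\chi(\mathcal{O}_X(-N))=\chi(\mathcal{O}_{\widetilde X}(-\widetilde N))$, $\chi(\mathcal{O}_{\widetilde X})=\chi(\mathcal{O}_{\widetilde X}(-\widetilde N))+2|N|$, and the Riemann--Roch equality $\chi(\widetilde X)=2\chi(\mathcal{O}_{\widetilde X})$, all of which appear {\em inside} the proof of Proposition \ref{genusprop} rather than in its statement. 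Your approach is more elementary in the sense that it needs no pair-of-pants decomposition, no completion of $\Gamma$ to a maximal multicurve, and no area computation, only Euler-characteristic bookkeeping; the cost is that it re-opens the proof of Proposition \ref{genusprop} and re-uses intermediate steps rather than treating that proposition as a black box. The surgery count $\chi(\widetilde X)=\chi(S)+2|\Gamma|$ that you single out as the delicate point is indeed correct (cutting along a circle preserves $\chi$; each of the $2|\Gamma|$ resulting boundary circles, when capped or collapsed, contributes $+1$), and the distinctness of nodes follows from pairwise disjointness of the $\gamma_i$ (stronger than non-homotopy, and part of the definition of a multicurve), so the one concern you flag is unproblematic.
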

\begin{proof}
Let $g$ be equal to the topological genus of $S$. Since $\phi$ is a marking of $(X,Z_X)$ by $(S,Z)$, there exists a multicurve $\Gamma=\{\gamma_1,\ldots,\gamma_n\}\subset S-Z$ so that $\phi$ induces a homeomorphism $\phi_*:(S/\Gamma,Z)\to(X,Z_X)$. Complete $\Gamma$ to a maximal multicurve on $S-Z$; that is, add a collection of curves $\{\gamma_{n+1},\ldots,\gamma_{n+m}\}$ to $\Gamma$ so that $\{\gamma_1,\ldots,\gamma_{n+m}\}$ forms a maximal multicurve, called $\widetilde\Gamma$ on the surface $S-Z$. This multicurve $\widetilde\Gamma$ has $3g-3+|Z|$ components (Proposition \ref{count}), and it decomposes the surface $S-Z$ into $2g-2+|Z|$ topological pairs of pants. Note that 
\[
\bigcup_{i=1}^n \phi_*(\gamma_i)=N_X,\text{ the set of nodes of }X,
\]
and for $i\in[n+1,n+m]$, $\phi_*(\gamma_i)$ is a simple closed curve on $X-Z_X-N_X$. For $i\in [n+1,n+m]$, replace each $\phi_*(\gamma_i)$ with the geodesic in its homotopy class, $\delta_i$. The set of geodesics $\Delta:=\cup \delta_i$ decomposes $X-Z_X-N_X$ into $2g-2+|Z|$ cusped hyperbolic pairs of pants. Each cusped hyperbolic pair of pants has area $2\pi$, so 
\[
\mathrm{Area}(X-Z_X-N_X)=2\pi(2g-2+|Z|).
\]
Together with Proposition \ref{genusprop}, we obtain
\[
2\pi(2g-2+|Z|)=2\pi\Bigl(2 \dim H^1(X, {\cal{O}}_X)-2+|Z_X|\Bigr). 
\]
Since $|Z|=|Z_X|$, we conclude that $g= \dim H^1(X, {\cal{O}}_X)$, the arithmetic genus of $X$. 
\end{proof}

\begin{proposition}\label{finiteauts}
Let $(X,Z_X)$ be a stable curve. Then the group of conformal automorphisms of $(X,Z_X)$, $\aut(X,Z_X)$,  is finite. 
\end{proposition}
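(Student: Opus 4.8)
The plan is to reduce the statement to the classical fact that a hyperbolic Riemann surface of finite type has a finite automorphism group, and then package the contributions from the (finitely many) components together with the combinatorial data of the nodes and marked points. Let $\pi:\widetilde X\to X$ be the normalization, $N$ the set of nodes, and $\widetilde N=\pi^{-1}(N)$, $\widetilde Z=\pi^{-1}(Z)$. The key observation is that any conformal automorphism $\alpha$ of $(X,Z_X)$ preserves the singular locus $N$ (nodes are intrinsically defined), hence permutes the components of $X$, and therefore lifts to a conformal automorphism $\widetilde\alpha$ of $\widetilde X$ permuting the components of $\widetilde X$ and preserving the finite set $\widetilde Z\cup\widetilde N$ (it must send punctures to punctures since it is conformal on each component minus these points). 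Conversely, an automorphism of $\widetilde X$ preserving $\widetilde Z\cup\widetilde N$ descends to $X$ precisely when it is compatible with the gluing involution on $\widetilde N$ that identifies pairs of points to nodes.

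The steps, in order, would be: (1) Observe that the normalization is functorial, so restriction/lifting gives an injective group homomorphism $\aut(X,Z_X)\hookrightarrow \aut\bigl(\widetilde X,\widetilde Z\cup\widetilde N\bigr)$; injectivity holds because $\pi$ is an isomorphism over $X-N$, which is dense. (2) Bound $\aut\bigl(\widetilde X,\widetilde Z\cup\widetilde N\bigr)$. Since $\widetilde X$ is a disjoint union of compact Riemann surfaces $\widetilde X_1,\dots,\widetilde X_k$, an automorphism first permutes these components (finitely many choices), then, on components fixed up to the chosen permutation, restricts to an isomorphism $\widetilde X_i\to\widetilde X_j$ carrying $(\widetilde Z\cup\widetilde N)\cap\widetilde X_i$ to $(\widetilde Z\cup\widetilde N)\cap\widetilde X_j$. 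By the stability hypothesis each component $\widetilde X_i-\bigl((\widetilde Z\cup\widetilde N)\cap\widetilde X_i\bigr)$ is a hyperbolic Riemann surface of finite type, so the group of such isomorphisms among any fixed pair is either empty or a torsor under the finite group $\aut$ of a fixed hyperbolic surface of finite type — here one invokes the standard fact (e.g. via the Schwarz lemma / the uniqueness of the hyperbolic metric, so that automorphisms are isometries of a finite-area surface, or equivalently via Hurwitz-type bounds) that this group is finite. (3) Conclude that $\aut\bigl(\widetilde X,\widetilde Z\cup\widetilde N\bigr)$ is finite as an extension of a subgroup of the symmetric group on the components by a finite product of finite groups, hence $\aut(X,Z_X)$ is finite as a subgroup.

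The main obstacle — really the only nontrivial input — is the finiteness of the automorphism group of a single hyperbolic Riemann surface of finite type fixing its punctures; everything else is bookkeeping about components and the gluing data. One clean way to handle it: by the uniformization theorem each such component $C=\widetilde X_i\setminus(\text{marked/node points})$ carries a unique complete hyperbolic metric of finite area, any conformal automorphism of the punctured surface is an isometry for this metric, and an automorphism extends to the compact surface permuting the finitely many punctures; finiteness then follows because the isometry group of a finite-area hyperbolic surface is compact and discrete (a subgroup of a Fuchsian group's normalizer), hence finite. Alternatively one can cite that $2-2g_i-n_i<0$ for each component forces $|\aut|\le 84(g_i-1)$ (Hurwitz) or the analogous bound in the punctured case. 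I would simply invoke this as standard and spend no space on its proof, since the stability condition in the definition of a stable curve is exactly what guarantees every component falls in this hyperbolic range.
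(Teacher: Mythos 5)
Your proof is correct and fills in the details of the one-sentence explanation the paper offers in lieu of a proof: the paper simply cites Harris--Morrison \cite{modcurves} and remarks that the result ``is essentially due to the fact that each connected component of the complement of the nodes in $X$ is hyperbolic,'' which is exactly the reduction you carry out by lifting to the normalization (injective because $\pi$ is an isomorphism over the dense set $X-N$) and invoking finiteness of the automorphism group of a finite-type hyperbolic surface, together with bookkeeping over the finitely many irreducible components. One small wording slip: a stable curve $X$ is by definition connected, so $\alpha$ permutes the \emph{irreducible} components of $X$ (equivalently, the connected components of $\widetilde X$), not ``the components of $X$.''
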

This is a standard result that can be found in \cite{modcurves}. It is essentially due to the fact that each connected component of the complement of the nodes in $X$ is hyperbolic. We now present a rigidity result.
\begin{proposition}
Let $(X,Z_X)$ be marked by $(S,Z)$. If $\alpha:(X,Z_X)\to (X,Z_X)$ is analytic such that the diagram
\[
\xymatrix{
&   &   &(X,Z_X)\ar[dd]^{\alpha} \\
&(S/\Gamma,Z)\ar[rru]^{\phi}\ar[rrd]_{\phi} &   & \\
&   &     &(X,Z_X)}
\]
commutes up to homotopy, then $\alpha$ is the identity. 
\end{proposition}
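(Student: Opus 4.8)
\emph{Strategy.} I would reduce the whole proposition to one elementary rigidity statement for a single finite type hyperbolic Riemann surface: \emph{such a surface admits no nontrivial conformal automorphism inducing the identity on its fundamental group up to conjugacy.} Lifting such an automorphism to the universal cover $\H$, it becomes a M\"obius transformation commuting with the Fuchsian deck group, which is non-elementary because the surface has negative Euler characteristic by stability; and a non-elementary subgroup of $\mathrm{PSL}_2(\R)$ has trivial centraliser in $\mathrm{PSL}_2(\R)$. Now, since $\alpha$ is analytic it preserves the singular locus $N_X$, and as an automorphism of the pair it preserves $Z_X$; hence it restricts to a conformal --- and therefore isometric --- automorphism of the finite type hyperbolic surface $X^\circ:=X-Z_X-N_X$, the disjoint union of the hyperbolic components of $X-N_X-Z_X$. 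It therefore suffices to show that $\alpha$ maps each component of $X^\circ$ to itself and is isotopic to the identity there; the rigidity statement then forces $\alpha=\mathrm{id}$ on the dense set $X^\circ$, hence $\alpha=\mathrm{id}$ by continuity. So everything comes down to analysing the hypothesis $\alpha\circ\phi\simeq\phi$ at the level of free homotopy classes of loops.

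\emph{Using the marking.} I would first note that the homeomorphism $\phi\colon(S/\Gamma,Z)\to(X,Z_X)$ restricts to a homeomorphism $\psi$ from the surface $(S/\Gamma)$ with its $n$ node points and the set $Z$ removed (canonically $S-[\Gamma]-Z$) onto $X^\circ$; in particular every free homotopy class of loops in $X^\circ$ is $\psi$ of such a class downstairs. Restricting a homotopy from $\phi$ to $\alpha\circ\phi$ to the product of this open set with $[0,1]$ yields a free homotopy in $X$ (not a priori in $X^\circ$) from $\psi$ to $\alpha\circ\psi$; consequently, for every loop $c$ in $X^\circ$, the loops $c$ and $\alpha(c)$ are freely homotopic in $X$. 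The remaining point is to promote ``freely homotopic in $X$'' to ``freely homotopic in $X^\circ$''.

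\emph{Promoting the homotopies; the main obstacle.} Here I would use that a simple closed geodesic of $X^\circ$ which is non-peripheral in its component, or peripheral to a puncture of $Z_X$, cannot be slid through a node, so its free homotopy class in $X$ meets $X^\circ$ only in its own class in that component; the only classes that genuinely merge in $X$ are those peripheral to nodes. Granting this, I would extend $\Gamma$ to a pants decomposition $\widetilde\Gamma=\{\gamma_1,\dots,\gamma_{n+m}\}$ of $(S,Z)$ as in Proposition~\ref{count}: each $\psi(\gamma_j)$ with $j>n$ is a non-peripheral simple closed geodesic of $X^\circ$, so $\alpha(\psi(\gamma_j))\simeq\psi(\gamma_j)$ in $X$ forces $\alpha$ to fix the component of $\psi(\gamma_j)$ and its isotopy class there, hence the geodesic itself setwise (an isometry fixes the geodesic in a given free homotopy class); likewise $\alpha$ fixes each puncture of $Z_X$. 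Then $\alpha$ is an orientation preserving isometry of $X^\circ$ fixing every curve of the pants decomposition $\psi(\{\gamma_{n+1},\dots,\gamma_{n+m}\})$ and every cusp, hence fixing every complementary pair of pants, hence equal to the identity on each of them, hence on $X^\circ$. The hard part is the sliding claim, together with the components invisible to the pants curves --- thrice punctured spheres all of whose punctures are nodes, whose fundamental groups may die in $\pi_1(X)$ and on which $\alpha$ could a priori act by a nontrivial permutation; these I would pin down separately, using that $\alpha$ induces an automorphism of the dual graph of $X$ and that the homeomorphism $\phi$ rigidifies this combinatorial data, a component and its $\alpha$-image corresponding to disjoint subsurfaces of $S$ that the homotopy class of $\phi$ forces to coincide.
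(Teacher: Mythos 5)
The paper offers no proof of this proposition: it simply refers the reader to Proposition~6.8.1 of Hubbard's Teichm\"uller theory volume. Your overall strategy --- reduce to the standard rigidity statement that a finite-type hyperbolic surface admits no nontrivial conformal automorphism inducing an inner automorphism of $\pi_1$, and then show that $\alpha$ preserves each component $Y_i$ of $X^\circ$ and is homotopic to the identity there --- is the natural route and is certainly the right skeleton.

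However, the ``sliding claim'' on which your promotion step rests is false, and this is a genuine gap. You assert that a simple closed geodesic $c$ in a component $Y_i$, non-peripheral in $Y_i$, has a free homotopy class in $X$ that ``meets $X^\circ$ only in its own class in that component.'' But the inclusion $Y_i\hookrightarrow X-Z_X$ factors through the component $\widetilde X_i-\widetilde Z_i$ of the normalization with the node preimages \emph{filled in} (not punctured), and $\pi_1(Y_i)\to\pi_1(\widetilde X_i-\widetilde Z_i)$ is very far from injective. Concretely, take $S$ of genus $2$, $Z=\emptyset$, and $\Gamma$ a single non-separating curve. Then $X$ is a torus with two points identified, $Y=X^\circ$ is a twice-punctured torus, $\pi_1(X)\cong\Z^2 * \Z$, and the infinitely many non-isotopic non-separating simple closed curves in $Y$ of a fixed homology class $(1,0)$ (differing by powers of the Dehn twist about the separating curve enclosing both punctures) all map to the same conjugacy class in $\pi_1(X)$. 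So ``$\alpha(\psi(\gamma_j))\simeq\psi(\gamma_j)$ in $X$'' does \emph{not} force $\alpha$ to fix the isotopy class of $\psi(\gamma_j)$ in $Y_i$, hence does not force $\alpha$ to fix the geodesic setwise. Worse, for pants curves $c$ that die in $\pi_1(\widetilde X_i-\widetilde Z_i)$ (for instance a curve encircling two node-punctures and nothing else), not even the component is pinned down by the conjugacy class in $\pi_1(X)$. You flag the all-nodal thrice-punctured spheres as the residual case to be handled by dual-graph rigidity, but the failure of the sliding claim is not confined to those components --- it occurs whenever the filling map $\pi_1(Y_i)\to\pi_1(\widetilde X_i-\widetilde Z_i)$ has a kernel, which is the typical situation. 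For the same reason, ``fixing every cusp'' is only established for the punctures at $Z_X$, not for the cusps corresponding to nodes, which is exactly what the pair-of-pants rigidity step needs. A correct argument cannot rely solely on free homotopy classes of loops in $X$: one must use the full homotopy of maps $\phi\simeq\alpha\circ\phi$ (for instance transported to a homeomorphism $f:=\phi^{-1}\circ\alpha\circ\phi$ of $S/\Gamma$ homotopic to the identity rel $Z$ and preserving the singular locus), together with the finiteness of $\mathrm{Aut}(X,Z_X)$ from Proposition~\ref{finiteauts}, to first rigidify the induced permutation of the components and nodes and then the mapping class on each component.
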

We refer the reader to Proposition 6.8.1 in \cite{teichbook} for a proof of this statement. 

\section{The augmented Teichm\"uller space}\label{ATeichSect}
Let $S$ be a compact, oriented surface of genus $g$, and $Z\subset S$ be a finite set of $n$ points, where $2-2g-n<0$. We define $\augteich$ in the following way. 

\begin{definition}\label{ATeichDef} 
The {\em{augmented Teichm\"uller space of $(S,Z)$}}, $\augteich$, is the set of stable curves, together with a marking $\phi$ by $(S,Z)$, up to an equivalence relation $\sim$: 

$\phi_1:S\to X_1$ and $\phi_2:S\to X_2$ are $\sim$-equivalent if and only if there exists a complex analytic isomorphism $\alpha:\bigl(X_1,\phi_1(Z)\bigr)\to \bigl(X_2,\phi_2(Z)\bigr)$, a homeomorphism $\beta:(S,Z)\to (S,Z)$, which is the identity on $Z$, and which is isotopic to the identity relative to $Z$ such that the diagram
\[
\xymatrix{
&(S,Z)\ar[rrr]^{\phi_1}\ar[d]_{\beta}	&	&	&\bigl(X_1,\phi_1(Z)\bigr)\ar[d]^{\alpha}	\\
&(S,Z)\ar[rrr]^{\phi_2}				& 	&	&\bigl(X_2,\phi_2(Z)\bigr)}
\]
commutes, and 
\[
\alpha\circ\phi_1|_Z = \phi_2|_Z.
\]
\end{definition}

\begin{remark}
{\em{The map $\beta$ sends the multicurve collapsed by $\phi_1$ to the multicurve collapsed by $\phi_2$, and these multicurves are isotopic (by definition).}}
\end{remark}
The ``set'' of stable curves does not exist, but we leave this set theoretic difficulty to the reader. 

We now need to put a topology on $\augteich$. This requires a modification of the standard annulus (or collar), $A_\gamma$ around a geodesic $\gamma$ on a complete hyperbolic surface \cite{buser}, \cite{teichbook}. Recall that these are still defined when the ``length of the geodesic becomes $0$,'' i.e., there is  a ``standard annulus'' or collar around a node, where in this case  the standard annulus is actually a union of two punctured disks, bounded by horocycles of length $2$. 

A neighborhood of an element $\tau_0\in\augteich$ represented by a homeomorphism  $\phi_0:(S/\Gamma_0,Z) \to \bigl(X_0, \phi_0(Z)\bigr)$ consists of $\tau\in\augteich$ represented by homeomorphisms $\phi:(S/\Gamma,Z) \to \bigl(X, \phi(Z)\bigr)$ where $X$ is a stable curve, $\Gamma$ is a subset (up to homotopy) of $\Gamma_0$, and the curves in the homotopy classes of 
\[
\phi(\gamma),\;  \gamma\in \Gamma_0-\Gamma
\] 
are short. Moreover, away from the nodes and short curves, the Riemann surfaces are close; the problem is to define just what this means. 

It is tempting to define ``away from the short curves'' to mean ``on the  complement of the standard annuli around the short curves,''  but this does not work.  In a pair of pants with two or three cusps, the boundaries of the standard annuli are not all disjoint; see Figure \ref{standardcollar}. Thus on a curve with nodes, the complements of the standard annuli do not always form a manifold with boundary. To avoid this problem, it is convenient to define the {\em trimmed annuli} around closed geodesics. 

\begin{figure}[h] 
   \centering
   \includegraphics[width=2.5in]{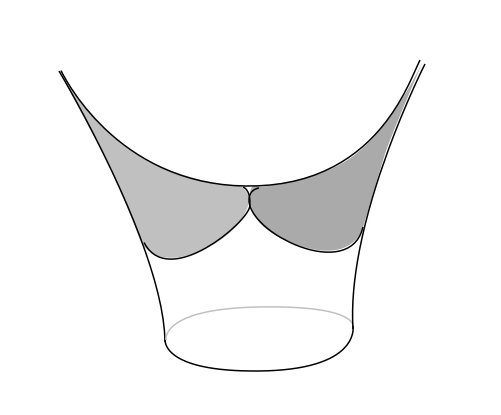} 
   \caption{A pair of pants with two cusps. The collars around the cusps are shaded in grey; they are bounded by horocycles of length 2, and touch at a point.}
   \label{standardcollar}
\end{figure}
Let $\gamma$ be a simple closed geodesic on a complete hyperbolic surface, let $A_\gamma$ be the standard annulus around $\gamma$. The {\em{trimmed annulus $A'_\gamma$}}  is the annulus of modulus 
\[
\mod A_\gamma' =\frac {(\mod A_\gamma)^{3/2}}{(\mod A_\gamma)^{1/2}+1},
\]
bounded by a horocycle, around the same curve or node.  The formula may seem a little complicated; 
\[
m \mapsto \frac{m^{3/2}}{m^{1/2}+1}
\]
 is chosen so that it is a $C^\infty$-function of $m$, so that 
 \[
 0< \frac{m^{3/2}}{m^{1/2}+1}<m,\quad\text{and}\quad \left(m-\frac{m^{3/2}}{m^{1/2}+1}\right)\to\infty\quad\text{as}\quad m\to\infty.
 \] 
Give  $\augteich$ the topology where an $\epsilon$-neighborhood $U_{\epsilon}\subset\augteich$ of the class of $\phi_0:S/\Gamma_0\to X_0$ consists of the set of elements represented by maps $\phi:S/\Gamma\to X$ such that 
\begin{itemize}
\item{up to homotopy, $\Gamma\subseteq\Gamma_0$}
\item{the geodesic in the homotopy classes of $\phi(\gamma),\gamma\in\Gamma_0-\Gamma$  all have length less than $\epsilon$,}
\item{there exists a $(1+\epsilon)$-quasiconformal map 
\[
\alpha:\left(X-\phi(Z)-A'_{\Gamma}(X-\phi(Z))\right)\longrightarrow \left(X_0-\phi_0(Z)-A'_{\Gamma_0}(X-\phi_0(Z))\right)
\]
where $A'_{\Gamma}(X-\phi(Z))\subset X-\phi(Z)$ is the collection of trimmed annuli about the geodesics in the homotopy classes of the curves of $\phi(\Gamma)$ in $X-\phi(Z)$.}
\end{itemize}
An alternative description of the topology of $\augteich$ can be given in terms of Chabauty limits and the topology of representations into $\mathrm{PSL}(2,\R)$; this can be found in \cite{harvey}, and similar descriptions can be found in \cite{scott03}, and \cite{scott08}. 
\subsection{The strata of augmented Teichm\"uller space}\label{ATeichStrat}

Let $\Gamma$ be a multicurve on $S-Z$. Denote by $\widetilde S^{\Gamma}$ the differentiable surface where $S$ is cut along $\Gamma$, forming a surface with boundary, and then components of the boundary are collapsed to points. Inasmuch as a topological surface has a normalization, $\widetilde S^{\Gamma}$ is the normalization of $S/\Gamma$. On this surface, we will mark the points $\widetilde Z$ corresponding to $Z$, and the points $\widetilde N$ corresponding to the boundary components (two points for each element of $\Gamma$). The surface $\widetilde S^{\Gamma}$ might not be connected; in this case, 
\[
{\cal{T}}_{\left(\widetilde S^{\Gamma},\widetilde Z \cup \widetilde N\right)}
\]
is the product of the Teichm\"uller spaces of the components. 
\begin{figure}[h] 
   \centering
   \includegraphics[width=5in]{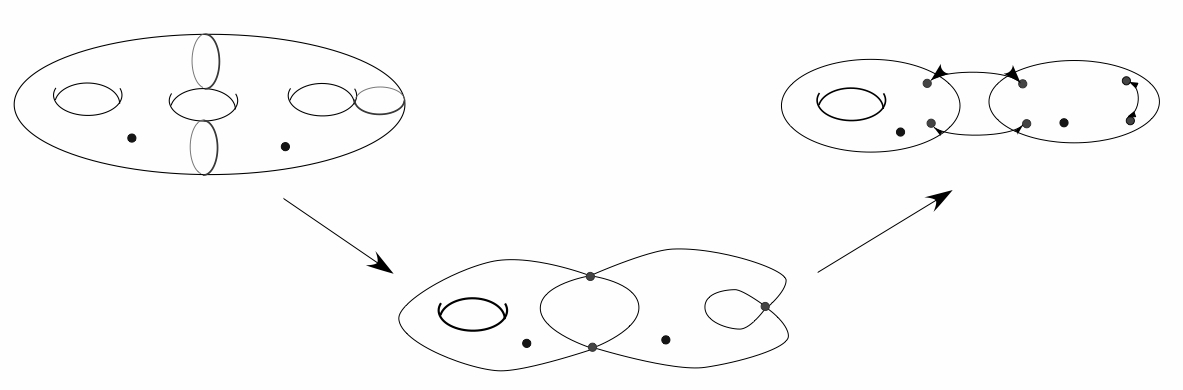} 
   \caption{On the left is the surface $(S,Z)$, with the multicurve $\Gamma$ drawn on $S-Z$ (see Figure \ref{squish}). In the center is the surface $(S/\Gamma,Z)$, where the components of $\Gamma$ have been collapsed to points. The surface $(\widetilde S^\Gamma,\widetilde Z\cup\widetilde N)$ is on the right; note that it is disconnected. In this case, the Teichm\"uller space of $(\widetilde S^{\Gamma},\widetilde Z \cup \widetilde N)$ is the product of two Teichm\"uller spaces: one corresponding to a torus with three marked points, and one corresponding to a sphere with five marked points.}
   \label{smallteich}
\end{figure}
The space $\augteich$ is the disjoint union of strata $\S_\Gamma$, one stratum for each homotopy class of multicurves. (In this case homotopy classes and isotopy classes coincide, see \cite{davideps}). A point belongs to $\S_\Gamma$  if it is represented by a map $\phi:S\to X$ which collapses a multicurve in the homotopy class of $\Gamma$. 

The space $\S_\Gamma$ is canonically isomorphic to the Teichm\"uller space of the pair $(\widetilde S^{\Gamma},\widetilde Z\cup\widetilde N)$. The minimal strata, which correspond to maximal multicurves, are points.  

By Theorem 6.8.3 in \cite{teichbook}, every stratum parametrizes a family of Riemann surfaces with marked points corresponding to $\widetilde Z\cup\widetilde N$. But we can also think of it as parametrizing a family of curves with nodes, by gluing together the pairs of points of $\widetilde N$ corresponding to the same $\gamma\in\Gamma$. 

\begin{example}\label{torus}
{\em{For $\tau$ in the upper-half plane $\H$, let $\Lambda_\tau\subset\C$ be the lattice $\Z\oplus\tau\Z$, and define $S:=\C/\Lambda_i=\C/(\Z\oplus i\Z)$, define $Z:=\{0\}$ in $S$, and define $X_\tau:=\C/\Lambda_\tau$. Then the Teichm\"uller space ${\cal{T}}_{(S,Z)}$ can be identified with $\H$ where the Riemann surface $X_\tau$ is marked by the homeomorphism $\phi:(S,Z)\to (X,\phi(0))$, induced by the real linear map $\tilde\phi:\C\to\C$, given by $\tilde\phi(1)=1$, and $\tilde\phi(i)=\tau$. 

If $\tau$ is in a small horodisk based at $p/q$, then $q\tau-p$ is close to $0$. Let $n,m\in\Z$ so that $nq+mp=1$. Then a new basis of the lattice $\Lambda_\tau$ is given by \[
\Lambda_\tau=<n+m\tau,-p+q\tau>.
\] 
\begin{figure}[h] 
   \centering
   \includegraphics[width=5in]{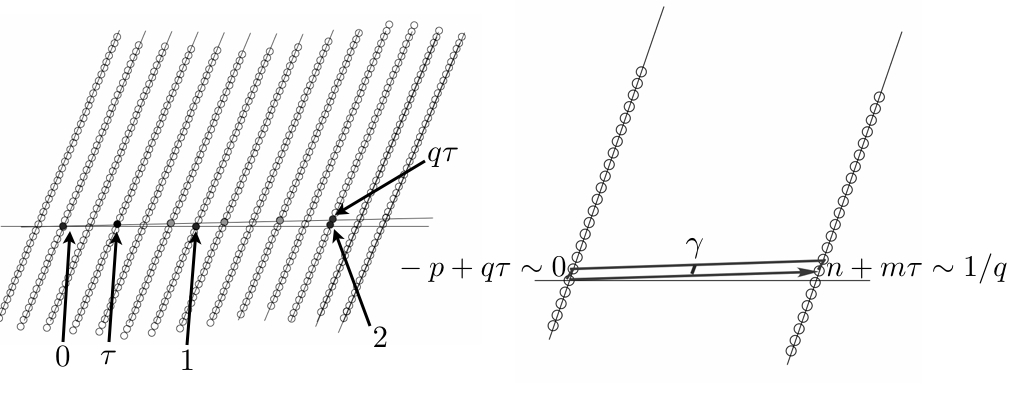} 
   \caption{On the left is a picture of the lattice $\Lambda_\tau\subset\C$ for some $\tau$ in a small horodisk based at $p/q=2/5$. Note that $q\tau-p$ is close to $0$. On the right is a blow up of a fundamental domain for the lattice $\Lambda_\tau$; a new basis for the lattice is given by $-p+q\tau\sim 0$ and $n+m\tau\sim 1/q$. The geodesic $\gamma$ joining $1/(2q)$ to $1/(2q)-p+q\tau$ is short; it is drawn in the middle of the parallelogram on the right. This curve  corresponds to the curve of slope $-q/p$ on $(S,Z)$.}
   \label{lattice}
\end{figure}
The augmented Teichm\"uller space $\augteich$ is $\H\cup(\mathbb{Q}\cup\{\infty\})$; if $\tau$ is in a small horodisk based at $p/q$, then the curve of slope $-q/p$ on $S-Z$ is getting short. The boundary stratum $\{p/q\}$ corresponds to collapsing the multicurve of slope $-q/p$ on $(S,Z)$. The topology of $\augteich$ is the ordinary topology on $\H$, and a neighborhood of $p/q\in\mathbb{Q}$ is the union of $\{p/q\}$ and a horodisk based at $p/q$.}}
\end{example}

We define the mapping class group $\mod(S,Z)$ to be the group of isotopy classes of orientation-preserving homeomorphisms $(S,Z)\to (S,Z)$ that fix $Z$ pointwise (sometimes called the {\it pure\/} mapping class group). Evidently $\mod(S,Z)$ acts on $\augteich$ by homeomorphisms: for $f$ representing an element $[f]\in\mod(S,Z)$, the action is given by $f\cdot (X,\phi):=(X,\phi\circ f)$.

Recall that an action $G\times X \to X$ is properly discontinuous if every point of $X$ has a neighborhood $U$ such that the set of $g\in G$ with $(g\cdot U) \cap U \neq \emptyset$ is finite; the action of $\mod(S,Z)$ on $\augteich$ is not properly discontinuous as can be seen in Example \ref{torus}, where $\mod(S,Z)\approx \mathrm{SL}(2,\Z)$. 
\begin{definition}\label{MCG(S,Z,G)}
 Let $\Gamma$ be a multicurve on $S-Z$. We define the following groups: 
 \begin{itemize}
 \item $\mod(S,Z,\Gamma)$ is the subgroup of $\mod(S,Z)$ consisting of those mapping classes which have representative homeomorphisms $h:(S,Z)\to (S,Z)$, such that for all $\gamma\in\Gamma$, $h([\gamma])=[\gamma]$, and such that $h$ fixes each component of $S-[\Gamma]$, and 
  \item $\mod(S/\Gamma, Z)$ is the group of isotopy classes of homeomorphisms $S/\Gamma \to S/\Gamma$ that fix $Z$ pointwise, fix the image of each $\gamma\in\Gamma$ in $S/\Gamma$, and map each component of $S-\Gamma$ to itself, and
 \item $\Delta_\Gamma$ is the subgroup of $\mod(S,Z)$ generated by Dehn twists around the elements of $\Gamma$.
 \end{itemize}
\end{definition}

The group  $\mod(S/\Gamma, Z)$ is the pure Teichm\"uller modular group of the Teichm\"uller space 
\[
{\cal{T}}_{(\widetilde S^\Gamma,\widetilde Z\cup\widetilde N)}.
\]
This defines a homomorphism 
\[
\Psi: \mod(S,Z,\Gamma) \to \mod(S/\Gamma, Z).
\] 
\begin{proposition} \label{psi} 
The homomorphism $\Psi$ is surjective, and its kernel is the subgroup $\Delta_\Gamma \subset \mod(S,Z,\Gamma)$.
\end{proposition}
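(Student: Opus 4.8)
The three assertions---that $\Psi$ is surjective, that $\Delta_\Gamma\subseteq\ker\Psi$, and that $\ker\Psi\subseteq\Delta_\Gamma$---will be established in that order, using only elementary surface topology: the Alexander trick, the identification of the mapping class group of an annulus rel its boundary with $\Z$ generated by the Dehn twist, and the capping exact sequence for mapping class groups. Fix pairwise disjoint closed annular neighbourhoods $A_1,\dots,A_n$ of $\gamma_1,\dots,\gamma_n$; let $q\colon S\to S/\Gamma$ be the collapsing map, $N=q([\Gamma])$ the node set, and recall that $q$ restricts to a homeomorphism $S-[\Gamma]\to(S/\Gamma)-N$. Write $\hat S$ for the compact bordered surface obtained by cutting $S$ along $[\Gamma]$: it has $2n$ boundary circles $\partial_i^{+},\partial_i^{-}$, one pair per $\gamma_i$; gluing $\partial_i^{+}$ to $\partial_i^{-}$ recovers $S$, and collapsing each $\partial_i^{\pm}$ to a point of $\widetilde N$ produces $\widetilde S^{\Gamma}$. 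Recall also that $\mod(S/\Gamma,Z)$ is canonically $\mathrm{Mod}(\widetilde S^{\Gamma},\widetilde Z\cup\widetilde N)$.

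For $\Delta_\Gamma\subseteq\ker\Psi$ it suffices to treat a generator $T_{\gamma_i}$, represented by a homeomorphism supported in $A_i$ and equal to the identity on $\partial A_i$. Collapsing the core $\gamma_i$ turns $A_i$ into a wedge $D^{+}\cup D^{-}$ of two closed disks joined at the node $q(\gamma_i)$, and $T_{\gamma_i}$ descends to a self-homeomorphism of $D^{+}\cup D^{-}$ that fixes the node and is the identity near $\partial D^{\pm}$; the Alexander trick on $D^{+}$ and on $D^{-}$, coned from the node, isotopes this map to the identity through homeomorphisms that fix the node and are supported near it, hence fix $Z$ and preserve each component of $(S/\Gamma)-N$. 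So $\Psi(T_{\gamma_i})=1$. For surjectivity, given $[g]\in\mod(S/\Gamma,Z)$, lift $g$ across the normalisation to $\tilde g$ on $\widetilde S^{\Gamma}$ (using only that $g$ fixes each node and each local branch); then isotope $\tilde g$, supported near $\widetilde N$ and away from $\widetilde Z$, first so that it is the identity on small circles $\partial D_i^{\pm}$ about the points of $\widetilde N$ (an orientation-preserving circle homeomorphism is isotopic to the identity) and then, by the Alexander trick rel the centres, so that it is the identity on $\bigcup_i(D_i^{+}\cup D_i^{-})$; regluing now yields a homeomorphism $h$ of $S$ preserving each $\gamma_i$, each side of $\gamma_i$, and each component of $S-[\Gamma]$, so $[h]\in\mod(S,Z,\Gamma)$, and $q\circ h=g\circ q$ on the dense set $S-[\Gamma]$, hence everywhere, so $\Psi([h])=[g]$.

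For $\ker\Psi\subseteq\Delta_\Gamma$, let $[h]\in\ker\Psi$. Triviality of $\bar h=\Psi([h])$ forbids, up to isotopy, interchanging the two branches at any node, so after an isotopy $h(A_i)=A_i$ for all $i$, each component of $A_i-\gamma_i$ is preserved, and the cut homeomorphism $\hat h\colon\hat S\to\hat S$ may be taken to fix $\partial\hat S$ pointwise. Collapsing the boundary circles carries $\hat h$ to the normalisation lift $\tilde h$ of $\bar h$; restricting an isotopy $\bar h\simeq\mathrm{id}$ to $(S/\Gamma)-N$ and filling in the punctures---the isotopy is proper at each puncture because each stage fixes the corresponding node---shows $\tilde h\simeq\mathrm{id}$ in $\mathrm{Mod}(\widetilde S^{\Gamma},\widetilde Z\cup\widetilde N)$. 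By the capping exact sequence, iterated over the $2n$ boundary circles, the kernel of $\mathrm{Mod}(\hat S,\partial\hat S)\to\mathrm{Mod}(\widetilde S^{\Gamma},\widetilde Z\cup\widetilde N)$ is freely generated by the boundary twists $T_{\partial_i^{\pm}}$, so $\hat h$ is isotopic rel $\partial\hat S$ to $\prod_i T_{\partial_i^{+}}^{a_i}T_{\partial_i^{-}}^{b_i}$; regluing $\partial_i^{+}$ to $\partial_i^{-}$ sends both $T_{\partial_i^{+}}$ and $T_{\partial_i^{-}}$ to $T_{\gamma_i}$, whence $h$ is isotopic rel $Z$ to $\prod_i T_{\gamma_i}^{a_i+b_i}\in\Delta_\Gamma$. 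With the previous step this gives $\ker\Psi=\Delta_\Gamma$.

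The Alexander-trick steps and the marked-point bookkeeping are routine; the genuine obstacle is the interaction of the nodal and normalised surfaces with the capping sequence, concentrated in two points: (i) upgrading an isotopy of $\bar h$ to the identity on the nodal surface $S/\Gamma$ to an isotopy of $\tilde h$ to the identity on the smooth marked surface $\widetilde S^{\Gamma}$, which requires controlling the isotopy near the punctures created by normalisation; and (ii) identifying the kernel of the capping map as exactly $\langle T_{\partial_i^{\pm}}\rangle$ and checking that these boundary twists reglue to the $T_{\gamma_i}$. One can alternatively read the whole statement off the standard exact sequence for cutting along a multicurve, but the argument above keeps the role of the collapsing map $q$ explicit.
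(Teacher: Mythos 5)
Your proof is correct and is a careful expansion of exactly the sketch the paper gives, whose entire content is that $\pi_0$ of orientation-preserving homeomorphisms of an annulus rel boundary is $\Z$ generated by the Dehn twist: you package the surjectivity step as the Alexander trick and the kernel step as the iterated capping exact sequence, but both are just this fact in disguise, and the cut-and-reglue bookkeeping (both boundary twists $T_{\partial_i^{\pm}}$ regluing to $T_{\gamma_i}$) is precisely the paper's ``any two extensions differ by a Dehn twist.'' One cosmetic point: the kernel of the iterated capping map is free \emph{abelian} on the $2n$ boundary twists (they commute, being twists about disjoint curves), not free, though this does not affect the conclusion.
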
 
\begin{proof} The surjectivity comes down to the (obvious)  statement that the identity on the boundary of an annulus extends to a homeomorphism of the annulus, and the computation of the kernel follows from the (less obvious) fact that any two such extensions differ by a Dehn twist.  We leave the details to the reader.
\end{proof} 
\begin{proposition}\label{MCGSZGprop}
Let $\tau\in\S_\Gamma$, and let $g\in\mod(S,Z,\Gamma)$. The following equivalent:
\begin{center}
\begin{enumerate}
\item\label{first}$g\cdot\tau=\tau$,
\item\label{second}for all neighborhoods $U\subseteq \augteich\text{ of }\tau$, we have $g(U)\cap U\neq\emptyset$, and
\item \label{third}$g\in\Psi^{-1}\Bigl(\mathrm{Aut}\bigl(X,\phi(Z)\bigr)\Bigr)$.
\end{enumerate}
\end{center}
\end{proposition}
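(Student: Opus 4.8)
The plan is to prove the two equivalences $(1)\Leftrightarrow(3)$ and $(1)\Leftrightarrow(2)$ separately. The implication $(1)\Rightarrow(2)$ is immediate, so the real content lies in $(1)\Leftrightarrow(3)$ and in $(2)\Rightarrow(1)$. The common starting point for both is the observation that any $g\in\mod(S,Z,\Gamma)$ maps the stratum $\S_\Gamma$ to itself — by definition it fixes $[\Gamma]$ up to homotopy and fixes each component of $S-[\Gamma]$ — and that, under the canonical identification $\S_\Gamma\cong{\cal{T}}_{(\widetilde S^\Gamma,\widetilde Z\cup\widetilde N)}$, the map $\tau\mapsto g\cdot\tau$ on $\S_\Gamma$ is precisely the Teichm\"uller modular transformation determined by $\Psi(g)\in\mod(S/\Gamma,Z)$. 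This is because the action of $\mod(S,Z,\Gamma)$ on $\S_\Gamma$ factors through $\Psi$: the kernel $\Delta_\Gamma$ of $\Psi$ (Proposition \ref{psi}) is generated by Dehn twists around curves of $\Gamma$, and since those curves are collapsed to nodes in a $\Gamma$-marked stable curve, such twists act trivially on $\S_\Gamma$ in view of the equivalence relation of Definition \ref{ATeichDef}.

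For $(1)\Leftrightarrow(3)$ I would argue as follows. By the previous paragraph, $g\cdot\tau=\tau$ if and only if $\Psi(g)$ fixes the point of ${\cal{T}}_{(\widetilde S^\Gamma,\widetilde Z\cup\widetilde N)}$ corresponding to $\tau$. The stabilizer of a point of Teichm\"uller space under the pure modular group is, by the standard rigidity statement (the principle of Proposition 6.8.1 in \cite{teichbook}, already invoked above), exactly the group of conformal automorphisms of the marked Riemann surface representing that point — here the normalization $(\widetilde X,\widetilde Z\cup\widetilde N)$ — realized inside $\mod(S/\Gamma,Z)$ through the marking, and this realization is a bijection onto the stabilizer by the usual two-sided argument (an automorphism $\alpha$ of the stable curve produces the class $\phi^{-1}\circ\widetilde\alpha\circ\phi$, which fixes $\tau$ because $\widetilde\alpha$ exhibits the required equivalence). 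One then identifies a conformal automorphism of $(X,\phi(Z))$ with a conformal automorphism of its normalization that respects the gluing of the two preimages of each node and preserves each branch (the latter because such automorphisms come from $\mod(S,Z,\Gamma)$, which fixes the components of $S-[\Gamma]$), so that this stabilizer is exactly the image of $\mathrm{Aut}(X,\phi(Z))$ in $\mod(S/\Gamma,Z)$. Hence $(1)$ holds iff $\Psi(g)\in\mathrm{Aut}(X,\phi(Z))$, i.e.\ iff $g\in\Psi^{-1}\bigl(\mathrm{Aut}(X,\phi(Z))\bigr)$.

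For $(1)\Leftrightarrow(2)$, the implication $(1)\Rightarrow(2)$ is trivial, since $g\cdot\tau=\tau$ gives $\tau\in g(U)\cap U$ for every neighborhood $U$ of $\tau$. For $(2)\Rightarrow(1)$, suppose $g\cdot\tau\neq\tau$. Then $\Psi(g)$ moves the corresponding point of ${\cal{T}}_{(\widetilde S^\Gamma,\widetilde Z\cup\widetilde N)}$, and since the pure modular group acts properly discontinuously on Teichm\"uller space, there is a neighborhood $V$ of $\tau$ in $\S_\Gamma$ with $\Psi(g)(V)\cap V=\emptyset$. The remaining task is to ``thicken'' $V$ to a neighborhood $U$ of $\tau$ in $\augteich$ with $g(U)\cap U=\emptyset$. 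Here I would use the explicit description of the topology of $\augteich$ near $\tau$ from Section \ref{ATeichSect}: a point near $\tau$ is a $\Gamma'$-marked stable curve with $\Gamma'\subseteq\Gamma$ together with a $(1+\epsilon)$-quasiconformal comparison on the complement of the trimmed annuli about the short curves; because $g$ fixes $\Gamma$ and each of its components, it carries this data for a point near $\tau$ to the corresponding data for its image near $g\cdot\tau$, and choosing $\epsilon$ small enough that the quasiconformal bound forces the ``$\S_\Gamma$-part'' of such a point into $V$ yields the desired disjoint $U$.

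The main obstacle will be exactly this last thickening step in $(2)\Rightarrow(1)$: promoting a separation statement that holds on the single stratum $\S_\Gamma$, where the properly discontinuous modular-group action is available, to one valid in the whole space $\augteich$, which has no manifold structure, so one is forced to argue with the trimmed-annulus/quasiconformal definition of its topology and to verify carefully that an element of $\mod(S,Z,\Gamma)$ is compatible with that description. A smaller but genuine point to get right in $(1)\Leftrightarrow(3)$ is the bookkeeping identifying $\mathrm{Aut}(X,\phi(Z))$ with the correct group of automorphisms of the normalization and its image in $\mod(S/\Gamma,Z)$ — that is, keeping track of which branch at each node is sent where.
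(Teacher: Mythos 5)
Your proof matches the paper's in substance: the content is identifying the stabilizer of $\tau$ under the $\mod(S/\Gamma,Z)$-action with $\mathrm{Aut}(X,\phi(Z))$ (via rigidity) and the factorization of the $\mod(S,Z,\Gamma)$-action on $\S_\Gamma$ through $\Psi$. The one calibration issue is that you single out $(2)\Rightarrow(1)$ as the main obstacle and propose a trimmed-annulus/quasiconformal thickening argument, whereas the paper declares $(1)\iff(2)$ obvious. It is obvious once one knows that $\augteich$ is Hausdorff, which the paper takes for granted: if $g\cdot\tau\neq\tau$, choose disjoint neighborhoods $V_1\ni\tau$ and $V_2\ni g\cdot\tau$ in $\augteich$ and set $U:=V_1\cap g^{-1}(V_2)$; then $U\subseteq V_1$ is disjoint from $g(U)\subseteq V_2$. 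Your thickening construction is in effect a hands-on local proof of that separation --- not wrong, but more work than needed, and as written it leaves somewhat vague what the ``$\S_\Gamma$-part'' of a nearby point lying in a smaller stratum $\S_{\Gamma'}$ is supposed to be. The paper instead locates the real content in $(2)\iff(3)$, which is exactly the stabilizer identification you spell out in your $(1)\iff(3)$ paragraph, so the two proofs agree on the substantive step.
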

\begin{proof}
The equivalence $(\ref{first})\iff(\ref{second})$ is obvious, and the equivalence $(\ref{second})\iff (\ref{third})$ follows from the fact that the stabilizer of $\tau\in \S_\Gamma$ in $\mod(S/\Gamma, Z)$ is the group of automorphisms of $(X,Z_X)$ where the $\Gamma$-marking $(S,Z)\to (X,Z_X)$ represents $\tau$. 
\end{proof}
\begin{corollary}\label{finitecosets}
Every $\tau \in \S_\Gamma $ has a neighborhood $U\subseteq \augteich$ for which the set of $g\in \mod(S,Z)$ such that $(g \cdot U) \cap U \neq \emptyset$ is a finite union of cosets of the group $\Delta_\Gamma$.
\end{corollary}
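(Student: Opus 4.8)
The plan is to produce the required neighborhood explicitly, as a standard $\epsilon$-neighborhood $U_\epsilon$ of $\tau$ for $\epsilon$ small, and to analyze the set $\Sigma:=\{g\in\mod(S,Z):(g\cdot U_\epsilon)\cap U_\epsilon\neq\emptyset\}$ directly. The first observation is that $U_\epsilon$ is invariant under $\Delta_\Gamma$: a Dehn twist around $\gamma\in\Gamma$ leaves every $\Gamma$-length unchanged, alters only the twist along $\gamma$ (which is unconstrained in the definition of $U_\epsilon$), and restricts to a map isotopic to the identity on the complement of the $\Gamma$-collars, hence carries $U_\epsilon$ onto itself. Consequently $(g\delta)\cdot U_\epsilon=g\cdot U_\epsilon$ for $\delta\in\Delta_\Gamma$, so $\Sigma$ is automatically a union of left cosets of $\Delta_\Gamma$, and it suffices to show that (for $\epsilon$ small) $\Sigma$ meets only finitely many of them.

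Next I would show that every $g\in\Sigma$ preserves the isotopy class $[\Gamma]$. Pick $\sigma\in(g\cdot U_\epsilon)\cap U_\epsilon$; since this set is open and nonempty and the top stratum $\S_\emptyset\cong\mathcal{T}_{(S,Z)}$ is dense in $\augteich$, we may take $\sigma\in\S_\emptyset$, represented by a homeomorphism $\phi_\sigma\colon S\to X_\sigma$ onto a smooth Riemann surface. For $\epsilon$ small enough (depending on $\tau$), the defining conditions of $U_\epsilon$ — short $\Gamma$-geodesics together with a $(1+\epsilon)$-quasiconformal, marking-compatible identification of the complement of the $\Gamma$-collars with the fixed thick part of $X_\tau$ — force the intrinsically defined set of closed geodesics of $X_\sigma$ below a fixed universal threshold to be exactly $\phi_\sigma([\Gamma])$: standard collar estimates and the lower bound on lengths in the (compact) thick part of $X_\tau$ rule out any other short curve. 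Running the same argument for $g^{-1}\sigma\in U_\epsilon$, which is represented by $\phi_\sigma\circ g^{-1}$, shows this same intrinsic set equals $\phi_\sigma(g^{-1}([\Gamma]))$, whence $g^{-1}([\Gamma])=[\Gamma]$. Thus $\Sigma$ lies in the subgroup of $\mod(S,Z)$ stabilizing $[\Gamma]$; since $\mod(S,Z,\Gamma)$ is normal of finite index there (the quotient injects into the finite group of permutations of the curves in $\Gamma$ and of the components of $S-[\Gamma]$) and $\Delta_\Gamma$ is normal there as well (conjugation permutes the generating Dehn twists up to sign), a routine coset bookkeeping reduces everything to bounding the number of cosets $g\Delta_\Gamma$, $g\in\mod(S,Z,\Gamma)$, that can occur.

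The heart of the matter — and the step I expect to be the main obstacle — is that the action of $\mod(S,Z)$ on $\augteich$ is \emph{not} properly discontinuous at $\tau$ (the role of $\Delta_\Gamma$ is precisely to absorb this failure), so one cannot simply quote proper discontinuity. The way around it is to transfer the problem to the stratum $\S_\Gamma\cong\mathcal{T}_{(\widetilde S^{\Gamma},\widetilde Z\cup\widetilde N)}$, on which the Teichm\"uller modular group $\mod(S/\Gamma,Z)$ \emph{does} act properly discontinuously. For $g\in\mod(S,Z,\Gamma)\cap\Sigma$, choose $\sigma\in\S_\emptyset\cap(g\cdot U_\epsilon)\cap U_\epsilon$ as above; then $\sigma\in U_\epsilon$ supplies a $(1+\epsilon)$-quasiconformal, marking-compatible map from the thick part of $X_\sigma$ onto the thick part of $X_\tau$, and $g^{-1}\sigma\in U_\epsilon$ supplies such a map with respect to the marking $\phi_\sigma\circ g^{-1}$; composing these two yields a quasiconformal self-map of $X_\tau$ whose dilatation is bounded independently of $g$ and which lies in the homotopy class by which $\Psi(g)$ acts on $\S_\Gamma$. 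Hence $\Psi(g)$ moves $\tau$ a uniformly bounded Teichm\"uller distance in $\mathcal{T}_{(\widetilde S^{\Gamma},\widetilde Z\cup\widetilde N)}$, so proper discontinuity shows that only finitely many elements $\Psi(g)$ arise (they all lie in a fixed finite set containing $\aut(X_\tau,\phi(Z))$, which is finite by Proposition \ref{finiteauts}); since $\ker\Psi=\Delta_\Gamma$ by Proposition \ref{psi}, the set $\mod(S,Z,\Gamma)\cap\Sigma$ meets only finitely many cosets of $\Delta_\Gamma$, completing the argument.

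Two points need care. First, the assertion that for small $\epsilon$ the short geodesics of $X_\sigma$ are \emph{exactly} $\phi_\sigma([\Gamma])$: this is where the trimmed-collar construction and the standard collar lemma are used — in particular they guarantee that nothing outside $\Gamma$ becomes short and that the complements of the trimmed collars really fit together into a manifold with boundary comparable to that of $X_\tau$. Second, the marking compatibility of the quasiconformal maps furnished by the topology of $\augteich$: it is this compatibility that makes the composition above represent $\Psi(g)$ rather than an unrelated element of $\mod(S/\Gamma,Z)$, and it is the concrete incarnation of the idea that a small neighborhood of $\tau$ in $\augteich$ retracts, with controlled distortion, onto a small neighborhood of $\tau$ in $\S_\Gamma$. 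Everything else — the coset bookkeeping, the normality and finite-index statements, and the input from Proposition \ref{MCGSZGprop} — is routine.
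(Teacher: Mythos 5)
Your proof is correct and takes the same essential route the paper intends: reduce, via the homomorphism $\Psi$ whose kernel is $\Delta_\Gamma$ (Proposition \ref{psi}), to the properly discontinuous action of $\mod(S/\Gamma,Z)$ on the stratum $\S_\Gamma$. The paper's own argument is the single sentence \emph{``This follows immediate[ly] from Proposition \ref{MCGSZGprop} and from Proposition \ref{psi}''}, so the content you supply---the $\Delta_\Gamma$-invariance of $U_\epsilon$, the short-curve argument forcing $g^{-1}([\Gamma])=[\Gamma]$, and the quasiconformal composition bounding the Teichm\"uller displacement of $\Psi(g)\cdot\tau$---is exactly what that ``immediate'' elides; the one place to pause is the closing ``routine coset bookkeeping,'' since finiteness in each nonidentity coset $h\cdot\mod(S,Z,\Gamma)$ of the stabilizer of $[\Gamma]$ does not follow formally from finiteness in the identity coset but instead requires rerunning the same quasiconformal bound with $h^{-1}\cdot U_\epsilon$ in place of one of the two copies of $U_\epsilon$ (the bound stays uniform over the finitely many $h$), which is harmless but should be said.
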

\begin{proof}
This follows immediate from Proposition \ref{MCGSZGprop} and from Proposition \ref{psi}.
\end{proof}

\section{Families of stable curves}\label{StThmSect}

Consider the locus 
\[
C:=\{(x,y,t)\in \C^3:xy=t\}\cap \{(x,y,t)\in\C^3 :|x|<4,|y|<4,\text{ and }|t|<1\}.
\]
Denote by $\rho:C \to \D$ the map $\rho:(x,y,t)\mapsto t$, and write $C_t=\rho^{-1}(t)$.  Note that $C_0$ is the union of the axes in the bidisk of radius $4$.

 Definition \ref{FlatDef} is a precise way of saying that a family $p:A \to B$ of curves with nodes parametrized by $B$  is flat if it looks locally in $A$ like the family $\rho:C \to \D$.
\begin{definition}\label{FlatDef}
Let $B$ be an analytic space. {\em{A flat family of curves with nodes, parametrized by $B$}} is an analytic space $A$ together with a morphism $p:A\to B$ such that for every $a\in A$, there is a neighborhood $U$ of $a$,  neighborhood $V$ of $p(a)$, a map $\psi: V \to \D$ and an isomorphism $\widetilde \psi:U \to \psi^*C$ such that the diagram
\[
\xymatrix{
U\ar[r]^{\widetilde\psi}\ar[rd]_p &\psi^*C\ar[r]\ar[d] &C\ar[d]^\rho\\
          &V\ar[r]^\psi &\D}
\]
commutes. 

We call such a pair $\psi:V \to \D, \widetilde \psi:U \to \psi^*C$ a {\it plumbing fixture\/} at the point $a$ (we borrowed the terminology from S. Wolpert, who borrowed it from D. Mumford).
\end{definition}
\begin{remark} {\em{We did not require that $0$ should be in the image of $\psi$.  This allows for the fibers of $p$ to be  double points, but also to be smooth points; in a neighborhood of such points the morphism $p$ is smooth, that is, there exist local coordinates with parameters.}}
\end{remark} 

Definition \ref{FlatDef} of a flat family of stable curves is equivalent to the standard definition of {\it flat\/} (see \cite{schlessinger}). It brings out the fact that ``flat'' means that the fibers vary ``continuously''.

\begin{definition}\label{stabfamily}
Let $p:X\to T$ be a proper flat family of curves with nodes; let $N\subset X$ be the set of nodes. Let $\sigma_1,\ldots,\sigma_m:T\to X-N$ be holomorphic sections with disjoint images; set $\Sigma:=\cup \sigma_i(T)$ and $\Sigma(t):=\cup \sigma_i(t)$. We will write $X(t)=p^{-1}(t)$. Then $(p:X\to T,\Sigma)$ is a {\em proper flat family of stable curves} if the fibers $(X(t),\Sigma_t)$  are stable curves for all $t\in T$. 
\end{definition}

\begin{example}\label{flatex} 
{\em{The subset of $X\subset \C^3$ defined by the equation
\[
y^2=x(x-1)(x-t)
\]
with the projection $p(x,y,t)=t$ is a flat family of elliptic curves. Two of the fibers have nodes: $p^{-1}(0)$ and $p^{-1}(1)$.}

As defined, this is not a flat proper family of stable curves: to get one we need to take the projective closure of the fibers, written (using  homogeneous coordinates in $\mathbb P^2$) as the subset of $\mathbb {P}^2 \times \C$  of equation
\[
x_0x_2^2=x_1(x_1-x_0)(x_1-t)
\]
with the projection $p([x_0:x_1:x_2],t)= t$ and the section $\sigma(t)=[0:1:0]$.  In that case the smooth fibers are elliptic curves with a marked point. The non-smooth fibers are $X(0)$ and $X(1)$; they are copies of $\mathbb P^1$ with two points identified, and a third point marked.}

\end{example}
We now present an example of a family which is not flat. 
\begin{example}\label{noflatex}
{\em{
Consider the map $pr_1:\C^3\to\C$, given by projection onto the first factor $(x,y,z)\mapsto x$. Let $B\subset\C^3$ be the union of the $xy$-plane and the $z$-axis, and consider the map 
\[
f:=pr_1|_{B}:B\to\C.
\]
Each fiber is a curve with nodes; this family is parametrized by $\C$, but the family is not flat; the fiber above $0$ is the union of the $y$-axis and the $z$-axis, whereas the fiber above every other point is just the $y$-axis. }}
\end{example}

 \subsection{The vertical hyperbolic metric}
 
 Let $(p:X \to T,\Sigma)$ be a proper flat family of stable curves, and define $X^*:=X-N-\Sigma$ to be the open set in the total space consisting of the complement of the marked points and the nodes.  The projection $p:X^* \to T$ is smooth (but not proper, of course), so there is a vertical tangent bundle $V \to X^*$. Denote by $V(t)$ the set of vectors tangent to $X^*(t)$, and by $V$ the union of all the $V(t)$.

Since each $(X(t), \Sigma(t))$ is stable,  $X^*(t)$ has a hyperbolic structure. This defines for each $t$ a metric $\rho_t:V(t) \to \R$; in a local coordinate $z$ on $X^*(t)$ we would write $\rho_t= \rho_t(z) |dz|$. We will call such functions $V \to \R$ {\it vertical metrics}.

Theorem \ref {rhocontinuous} is obviously of fundamental importance. Although it readily follows from results in Section 1 of \cite{scott90}, we provide our own proof. 
\begin{theorem}\label{rhocontinuous} 
The metric map $\rho:V \to \R$ is continuous.
\end{theorem}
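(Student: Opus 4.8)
The plan is to establish continuity of $\rho$ by combining a compactness argument with the local model $\rho:C\to\D$ of a flat family near a node. Fix a point $x_0\in X^*$ lying over $t_0\in T$, and a local coordinate $z$ on $X^*(t_0)$ near $x_0$ so that $\rho_{t_0}=\rho_{t_0}(z)\,|dz|$; the goal is to show that $\rho_t(z)\to\rho_{t_0}(z)$ uniformly on a neighborhood of $x_0$ as $t\to t_0$. The heart of the matter is that the hyperbolic metric on a surface is determined by the conformal structure, so we must show that the fibers $X^*(t)$ converge, in an appropriate conformal sense, to $X^*(t_0)$, and then invoke continuity (in fact real-analyticity) of the uniformizing metric under such convergence.

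First I would reduce to a uniform statement using properness. Since $T$ can be taken small and $p$ is proper, the locus of nodes $N$ is a finite union of sections-like subvarieties, and each node of $X(t_0)$ sits inside a plumbing fixture $\psi:V\to\D$, $\widetilde\psi:U\to\psi^*C$; away from these finitely many fixtures and away from the marked points $\Sigma$, the map $p:X\to T$ is a proper submersion, hence (Ehresmann) a smooth fiber bundle, so we get a family of diffeomorphisms $X^*(t)\to X^*(t_0)$ depending smoothly on $t$ on the complement of the plumbing collars. Pulling back the complex structures along these diffeomorphisms yields a continuously varying family of Beltrami coefficients $\mu_t$ on the \emph{truncated} surface (the complement of the plumbing collars in $X^*(t_0)$), with $\mu_t\to 0$ as $t\to t_0$. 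On the plumbing collars themselves the picture is governed explicitly by the model $xy=t$: for $t\neq 0$ the fiber $C_t$ is an annulus $\{|x|<4,\ |t|/4<|x|\}$ of modulus $\to\infty$, which degenerates to the pair of punctured disks $C_0$, and the hyperbolic metric of a long collar is explicitly controlled (e.g. comparison with the standard hyperbolic collar lemma) and converges, away from the core geodesic, to the hyperbolic cusp metric on $C_0$.

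Next I would globalize. On the truncated part, the Ahlfors--Bers measurable Riemann mapping theorem, together with its continuous (indeed analytic) dependence on parameters, shows the solutions of the Beltrami equations with coefficients $\mu_t$ vary continuously, and hence so do the uniformizing hyperbolic metrics of the fibers on compact subsets bounded away from the degenerating collars; since $\mu_t\to 0$, the limit is the hyperbolic metric of $X^*(t_0)$. The remaining issue is to match this with the collar estimate near the nodes and to rule out that, for $x_0$ near a forming node, the metrics fail to converge: here one uses that for any fixed $x_0\in X^*(t_0)$ there is a definite hyperbolic (equivalently, a definite $\D$-coordinate) distance from $x_0$ to the nodes, so $x_0$ lies in the ``thick part'' uniformly for $t$ near $t_0$, and on the thick part the two estimates agree. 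Combining the truncated-part convergence with the collar-part convergence gives continuity of $(t,z)\mapsto\rho_t(z)$ at $(t_0,x_0)$, which is the assertion.

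The main obstacle I expect is the bookkeeping at the interface between the plumbing collars and the truncated surface: one must choose the truncation so that it varies continuously with $t$ (the plumbing fixture gives natural cutoffs $|x|=\epsilon$, $|y|=\epsilon$, which is what makes this work), and then verify that the hyperbolic metric of $X^*(t)$, restricted to this truncated region, is genuinely close to that of $X^*(t_0)$ — this is not immediate because hyperbolic metrics are global objects and a long collar being glued in does affect the metric on the rest of the surface, though only by an exponentially small amount in the collar length. Controlling that exponentially small error uniformly, via a Schwarz-lemma/monotonicity comparison between the hyperbolic metric of $X^*(t)$ and that of the truncated pieces completed in two different ways (capping off with a disk versus with a cusp), is the technical crux; everything else is either Ehresmann's theorem, the explicit model $xy=t$, or the standard continuity of the measurable Riemann mapping theorem in parameters.
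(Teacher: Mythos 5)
The proposal takes a genuinely different route from the paper, and it has a genuine gap that the student acknowledges but does not close. The paper characterizes the Poincar\'e metric via the Kobayashi unit ball, $B_yY=\{\tfrac12\gamma'(0)\,:\,\gamma:\D\to Y\text{ analytic},\ \gamma(0)=y\}$, and then proves two claims --- that analytic disks into $X^*(t_0)$ can be perturbed into nearby fibers (controlling the ball from below), and that sequences of analytic disks into nearby fibers sub-converge, by a Montel-type argument together with the observation that opening a node produces a fat collar which keeps points far from the degenerating region (controlling the ball from above). This neatly avoids any direct comparison of hyperbolic metrics across fibers with changing topology. The student instead proposes to uniformize directly: Ehresmann trivialization away from the plumbing collars, measurable Riemann mapping with parameters on the truncated surface, and an explicit model in the collar, then patch.

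The gap is in the patching. The student correctly identifies the crux --- that the hyperbolic metric of $X^*(t)$ restricted to the truncated region is \emph{not} the intrinsic hyperbolic metric of that region, and that the discrepancy must be controlled uniformly in $t$ as the collar degenerates --- but then only gestures at ``a Schwarz-lemma/monotonicity comparison'' and asserts the error is ``exponentially small in the collar length.'' That assertion is precisely what needs proving, and it is not a routine verification: the Schwarz lemma gives monotonicity of the inclusion (the metric of the subsurface dominates), but turning that into a two-sided quantitative estimate that converges as $t\to t_0$ requires a genuine lemma (something like Wolpert's expansion for metrics near the cusp, or a careful barrier argument on the two capped-off completions). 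Until that estimate is supplied, the proposal establishes convergence of the metric on the truncated surface, not on $X^*(t)$, and the theorem is about the latter. The paper's two examples immediately following the theorem statement (the punctured bidisk and the rescaled disk) show that this kind of pointwise convergence of hyperbolic metrics under degeneration is genuinely delicate and can fail; the student's proof as written never pinpoints where the proper-flatness hypothesis enters to rule those pathologies out, whereas in the paper's Kobayashi argument this is transparent (it is exactly the ``fat collar keeps points far from the node'' step in Claims~1 and~2).
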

Before giving the proof, we present three examples illustrating why Theorem \ref{rhocontinuous} might be problematic, and why it might be true anyway. The first two examples are similar in nature. 
  \begin{example} \label{hyperbolicmetriccont} 
  {\em{
  Consider the family 
 \[
 X=\D\times \D-\{(0,0)\}\quad\text{with}\quad p(t,z)=t.
 \]
 The fibers are hyperbolic, and the vertical metric is 
 \begin{equation*}
  \rho_t = \begin{cases} \frac{2|dz| }{1-|z|^2} \quad &\text{if $t\ne 0$}\\
  \frac{|dz|}{|z|\log |z|}\quad &\text {if $t=0$.}
  \end{cases}
  \end{equation*}
  }}
  \end{example}
  \begin{example} 
  {\em{Consider the family 
\[
 X= \left\{ (t,z)\in \D\times \C: |zt|<1\text{ for }t\neq 0,\text{ and $|z|<1$ for $t=0$} \right\} \quad\text{with}\quad p(t,z)=t.
 \]
The fibers are hyperbolic for this example as well, and the vertical metric is 
\begin{equation*}
\rho_t = 
  \begin{cases} \frac{2|t||dz| }{1-|tz|^2} \quad &\text{if $t\ne 0$}\\
  \frac{2|dz|}{1-|z|^2}\quad &\text {if $t=0$}.
  \end{cases}
  \end{equation*}
  }}
  \end{example}
Evidently, $\rho_t$ is not continuous at $t=0$ in either example. It might seem that our families $X^* \to T$ are similar, especially to the first example:  we have removed the nodes and marked points, leaving punctures.  We will see that our ``proper flat'' assumption prevents this sort of pathology. For instance, in our model family $\{xy=t\}$ the problem disappears as discussed in the next example. 
\begin{example}
{\em{Recall the space  
\[
C= \left\{ (x,y,t)\in \C^3 : xy=t \quad \text{and}\quad |x|<4, |y|<4, |t|<1\right\},
\]
and set $p:C\to \D$ to be $p(x,y,t)=t$. Let $C^*$ be $C$ with the origin removed.  The map $p:C^* \to \D$ is smooth with all fibers hyperbolic, giving a vertical metric $\rho_t$.

For $t\neq 0$, the metric $\rho_t$ is the hyperbolic metric on $C_t$; the projection of $C_t$ onto the $x$-axis identifies $C_t$ with the 
annulus
\[
\left\{x\in \C : \frac{|t|}4 < |x| < 4\right\}.
\]
To compute the metric $\rho_t$ on this annulus, we push forward the metric from the universal cover to find that 
\begin{align*}
\rho_t &= \frac{\pi}{\cos\Bigl(\pi\frac{\log |x| - \log\sqrt{|t|}}{\log 16 -\log|t|}\Bigr) |x| (\log 16 -\log |t|)}\\
&= \frac 1{|x| \log(4/|x|)}\frac {\log|t|}{\log (|t|/16)} +o\left(\frac 1{\log(1/|t|)}\right).
\end{align*}
(This formula is also established in \cite{wentworth}, \cite{scott90} and \cite{scott03}). The limit of $\rho_t$ as $t\to 0$ exists on the $x$-axis and on the $y$-axis (away from the origin); these limits are 
\[
\rho_0= \frac {|dx|}{|x|\log (4/|x|)} \quad\text{and} \quad \rho_0= \frac {|dy|}{|y|\log (4/|y|)},
\]
i.e., on each it is precisely the hyperbolic metric of the punctured disk.
}}
 \end{example}  

\medskip

\noindent{\bf Proof of Theorem \ref{rhocontinuous}.}  
For this proof, we use the Kobayashi-metric description of the Poincar\'e metric:

If $Y$ is a hyperbolic Riemann surface, then the unit ball $B_yY\subset T_yY$ for the Poincar\'e metric is
\[
B_yY=\left\{ \frac 12 \gamma'(0)\ |\ \gamma:\D \to Y \text{ analytic, } \gamma(0)=y\right\}.
\] 
In light of this description, the following two statements say the Poincar\'e ball at points of  $X^*(t_0)$ cannot be much bigger or much smaller than the balls in nearby fibers $X^*(t)$, proving Theorem \ref{rhocontinuous}.

Choose $x\in X^*(t_0)$, and a $C^\infty$-section $s:T\to X^*$ with $s(t_0)=x$.  

\noindent{\bf Claim 1.} For all $r<1$, there exists a neighborhood $T'\subset T$ of $t_0$ such that for every analytic $f:\D \to X^*(t_0)$, there exists a continuous map $F:T'\times \D_r \to X$ commuting with the projections to $T'$ and analytic on  each $\{t\}\times \D_r$, such that $F(t,0)=s(t)$ and $F(t_0,z)=f(z)$ when $|t|<r$.

\noindent{\bf Claim 2.\ } For all $r<1$ and for all sequences $t_i$ tending to $t_0$, all sequences of analytic maps $f_i:\D\to X^*(t_i)$ with $f_i(0)=s(t_i)$ have a subsequence that converges uniformly on compact subsets of $\D$ to an analytic map $f:\D \to X^*(t_0)$. 

The key fact to prove these claims is that when a node ``opens'', it gives rise to a short geodesic, surrounded by a fat collar, and hence every point outside the collar is very far from the geodesic.

Let us set up some notation.  For each node $c\in N(t_0)$, choose disjoint plumbing fixtures
\[
\psi_c:V_c \to \D,\  \widetilde \psi:U_c \to \psi^*C
\]
at $c$, which do not intersect $\Sigma$. Let $V_{c,\epsilon}\subseteq T$ and $U_{c,\epsilon}\subseteq X$ be the subsets corresponding to 
\[
|x_c|<4\epsilon, |y_c|< 4\epsilon, |t_c|<\epsilon^2.
\]
Define 
\[
V_\epsilon:=\bigcap_c V_{c,\epsilon}, \quad X_\epsilon:=p^{-1}(V_\epsilon),\quad \text{and}\quad X'_\epsilon:=X_\epsilon-\bigcup_c U_{c,\epsilon}.
\]
The family $p:X_\epsilon\to V_\epsilon$ is differentiably a proper smooth family of manifolds with boundary, so there exists a $C^\infty$-trivialization
\[
\Phi:V_\epsilon\times X'_\epsilon(t_0) \to X'_\epsilon,
\]
that is the identity on $\{t_0\}\times X'_\epsilon(t_0)$.  Furthermore, we can choose the trivialization so that $s$ and all the sections $\sigma_i\in\Sigma$ are horizontal.  

\noindent{\bf Proof of Claim 1.} Choose $r'$ with $r<r'<1$, and an analytic map $f:\D \to X^*(t_0)$ with $f(0)=x$. Then for $\epsilon$ sufficiently small, $f(\D_{r'})\subseteq X'_\epsilon$ since the nodes are infinitely far away from $x$.

The map $G:V_\epsilon \times \D_{r'} \to X_\epsilon$ given by
\[
G(t,z)= \Phi(t,f(z))
\]
is a $C^\infty$-map, unfortunately not analytic on the fibers $\{t\}\times \D_{r'}$, but quasiconformal for a Beltrami form $\mu(t)$ such that $\|\mu(t)\| \to 0$ as $t\to t_0$.

Thus by the Riemann mapping theorem we can choose a continuous map 
\[
H:V_\epsilon \times \D_{r'} \to V_\epsilon \times \D_{r'}
\]
quasiconformal on the fibers, with $H(t,0)=(t,0)$, and for each $t$ maps the standard complex structure on $\D_{r'}$ to the $\mu(t)$-structure.  Moreover, we can choose $H$ to be arbitrarily close to the identity on $V_{\epsilon'} \times \D_r$ for $\epsilon'<\epsilon$ sufficiently small. Note that $H$ is the inverse of a solution of the Beltrami equation. 

Now the map $F(t,z)= G(H(t,z))$ is the map required by Claim 1.

\noindent{\bf Proof of Claim 2.} Choose $r<1$.  For sufficiently small $\epsilon$ and sufficiently  large $i$  we have $f_i(\D_r)\subseteq X'_\epsilon(t_i)$ for the same reason as above: points in $U_{c,\epsilon}$ are far away from $s(t_i)$.

We can therefore consider the sequence of maps $g_i:\D_r \to X'_\epsilon(t_0)$ given by
\[
g_i(z):= pr_2( \Phi^{-1}(t_i, f_i(z)) ).
\]
As above, these maps are not conformal, but they are quasiconformal with quasiconformal constant tending to $1$ as $i\to \infty$.  Moreover $g_i(0)=x$ for all $i$.  As such the sequence $i\mapsto g_i$ has a subsequence converging uniformly on compact subsets of $\D_r$, and the limit is our desired $f:\D \to X^*(t_0)$.
 \section{An important vector bundle}\label{Q2vectorbundlesection}

While ordinary differentials have residues at simple poles, quadratic differentials have residues at double poles. More particularly the residue of $dz^2(a/z^2+O(1/z))$ is equal to $a$, and this number is well-defined (with respect to changing coordinates). 

Let $(p:X \to T,\Sigma)$ be a proper flat family of stable curves of genus $g$, with $n$ marked points. Let $E(t)$ be the vector space of meromorphic quadratic differentials on $X(t)$, holomorphic on $X^*(t)$, and with at most simple poles at the points of $\Sigma(t)$ and at most double poles at $N(t)$ with equal residues at the pairs of points corresponding to the same node.

\begin{proposition} \label{dimensionvsquaddiff} We have for all $t\in T$, $\dim E(t)=3g-3+n$. 
\end{proposition}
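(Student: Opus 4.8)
The plan is to reduce the statement to the classical Riemann--Roch count on the normalization $\widetilde{X}(t)$ of the fiber $X(t)$. Fix $t\in T$ and write $X=X(t)$, with node set $N=N(t)$, marked-point set $\Sigma(t)$ of cardinality $n$, normalization $\pi:\widetilde{X}\to X$, and $\widetilde{N}=\pi^{-1}(N)$, $\widetilde{Z}=\pi^{-1}(\Sigma(t))$. A quadratic differential of the kind described pulls back under $\pi$ to a meromorphic quadratic differential on $\widetilde{X}$ with at most double poles along $\widetilde{N}$ and at most simple poles along $\widetilde{Z}$; conversely, such a differential on $\widetilde{X}$ descends to $X$ precisely when, at each node, the residues at the two preimages agree. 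So $E(t)$ is the space of global sections of the line bundle $L:=\omega_{\widetilde{X}}^{\otimes 2}(\widetilde{N}+\widetilde{Z})$ on $\widetilde{X}$ (here $\omega_{\widetilde{X}}$ is the canonical bundle, and $\widetilde{N}+\widetilde{Z}$ the reduced effective divisor of all preimages of nodes and marked points), cut out by the $|N|$ residue-matching conditions.

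\smallskip
First I would compute $h^0(\widetilde{X},L)$ by Riemann--Roch on each connected component of $\widetilde{X}$. On a smooth compact Riemann surface of genus $h$ one has $\deg\omega^{\otimes 2}(D)=4h-4+\deg D$, and as soon as this degree exceeds $2h-2$ (which holds here because each component, being a component of $X-Z-N$, is hyperbolic, forcing $2h-2+(\text{number of special points on it})>0$, i.e. the relevant twisted degree is positive and in fact $>2h-2$), the $H^1$ term vanishes, giving $h^0=4h-4+\deg D-h+1=3h-3+\deg D$ on that component. Summing over components and using $\deg(\widetilde N+\widetilde Z)=2|N|+n$ together with the genus relation $\sum_{\text{components}} h + |N| = g$ (the arithmetic genus, established earlier via Proposition~\ref{genusprop}: collapsing the cut curves adds $|N|$ to the genus) yields $h^0(\widetilde{X},L)=3g-3+n+|N|$. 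I would double-check the edge cases where a component is $\mathbb{P}^1$ with few special points, but hyperbolicity guarantees at least three special points there, keeping the $H^1$ vanishing intact.

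\smallskip
Next I would show the $|N|$ residue-matching linear conditions are independent, so that $\dim E(t) = h^0(\widetilde{X},L) - |N| = 3g-3+n$. For independence it suffices to exhibit, for each node $c$ with preimages $c_1,c_2\in\widetilde{X}$, a section of $L$ whose residues at $c_1,c_2$ differ while being unconstrained at the other nodes; equivalently, that the residue map $H^0(\widetilde X,L)\to \bigoplus_{c\in N}\C^2$ (recording the pair of residues at each node) is surjective onto the subspace where we only need to hit arbitrary differences. This follows from another Riemann--Roch / vanishing argument: the relevant restriction map is surjective because $H^1$ of the subsheaf of $L$ with a double zero imposed at one of the two preimages still vanishes (the twisted degree stays above $2h-2$ after subtracting $2$, again thanks to hyperbolicity, possibly needing a mild case check when a component becomes too small — but a hyperbolic component has enough special points to absorb this).

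\smallskip
The main obstacle I anticipate is precisely this independence/surjectivity step: the clean count $h^0-|N|$ only gives the right answer if no residue condition is redundant, and making that rigorous requires the vanishing statements above to survive on every component no matter how degenerate, which is where the stability (hyperbolicity of all components of $X-Z-N$) has to be invoked carefully. An alternative, perhaps cleaner, route would be to avoid normalization entirely and instead use Serre duality on the (possibly singular) curve $X$ directly: the space $E(t)$ is $H^0(X,\omega_X^{\otimes 2}(\Sigma(t)))$ where $\omega_X$ is the \emph{dualizing} sheaf, and then $\deg\omega_X^{\otimes2}(\Sigma)=4(g-1)+n$ on the arithmetic-genus-$g$ curve, with the $H^1$ term dual to $H^0(X,\omega_X^{-1}(-\Sigma))$, which vanishes since that sheaf has negative degree on every component; Riemann--Roch for the dualizing sheaf on a nodal curve then gives $h^0=4(g-1)+n-(g-1)=3g-3+n$ in one stroke. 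I would likely present the dualizing-sheaf argument as the main proof and relegate the normalization computation to a remark, since it keeps the bookkeeping minimal and makes the $H^1$-vanishing transparent.
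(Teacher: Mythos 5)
Your normalization route is essentially the paper's proof: the paper also passes to $\widetilde X$, establishes $H^1\bigl(\Omega_{\widetilde X}^{\otimes 2}(\widetilde N+\widetilde\Sigma)\bigr)=0$ by Serre duality and stability, and uses that to obtain both the count $h^0\bigl(\Omega_{\widetilde X}^{\otimes 2}(2\widetilde N+\widetilde\Sigma)\bigr)=3g-3+n+|N|$ and the surjectivity of the residue map (hence independence of the $|N|$ matching conditions) in one stroke, via the short exact sequence $0 \to \Omega_{\widetilde X}^{\otimes 2}(\widetilde N + \widetilde \Sigma) \to \Omega_{\widetilde X}^{\otimes 2}(2\widetilde N + \widetilde \Sigma) \to \C^{\widetilde N} \to 0$. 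Two slips to fix in your writeup: your $L$ should be $\omega_{\widetilde X}^{\otimes 2}(2\widetilde N + \widetilde Z)$, not $\omega_{\widetilde X}^{\otimes 2}(\widetilde N + \widetilde Z)$ (you want double, not simple, poles along $\widetilde N$, as you correctly say in words), and the genus relation should read $g = \sum_i h_i + |N| - k + 1$ with $k$ the number of components of $\widetilde X$, not $g = \sum_i h_i + |N|$; your stated conclusion $h^0 = 3g-3+n+|N|$ is correct once both are repaired. Your preferred route, identifying $E(t)$ with $H^0\bigl(X,\omega_X^{\otimes 2}(\Sigma)\bigr)$ for the dualizing sheaf $\omega_X$ and applying Riemann--Roch and Serre duality directly on the nodal curve, is genuinely different from the paper's and is cleaner: the residue-matching condition is built into $\omega_X^{\otimes 2}$ at the nodes, so there is no independence step to check, and the $H^1$-vanishing is a one-line negative-degree observation on each component. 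What it buys is brevity and transparency; what it costs is invoking duality and Riemann--Roch for a singular Gorenstein curve, whereas the paper's short-exact-sequence version stays entirely within the smooth Riemann-surface theory it uses elsewhere.
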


For a rough dimension count: collapsing a curve of $\Gamma$ and separating the double points decreases the count by 1; allowing double poles at the corresponding points increases the dimension by 4, and imposing equal residues decreases the dimension by 1.  Altogether, $3g-3+n$ has decreased by 3, then increased by 4, then decreased by 1, hence remains unchanged.  It isn't quite clear that these changes are independent; the following sheaf-theoretic argument shows that they are.

Fix some $t\in T$, and omit it from our notation. That is, we write $X=X(t)$, with nodes  $N=N(t)$, and marked points $\Sigma=\Sigma(t)$. 

Recall our notation for the normalization (see Section \ref{StableCurvesSect}),
\[
\pi:\widetilde X \to X,\quad \widetilde N:=\pi^{-1}(N),\quad\text{and}\quad\widetilde \Sigma:=\pi^{-1}(\Sigma).
\]
\begin{proof}
Consider the short exact sequence of sheaves
\[
0 \to \Omega_{\widetilde X}^{\otimes 2}(\widetilde N+ \widetilde \Sigma )\to \Omega_{\widetilde X}^{\otimes 2} (2\widetilde N+\widetilde \Sigma)\to \C_{\widetilde \Sigma} ^{\widetilde N}\to 0
\]
where the $(2\widetilde N+\widetilde \Sigma)$ indicates that we allow double poles at the points of $\widetilde X$ corresponding to the nodes, and we allow at most simple poles at the points of $\widetilde \Sigma$. This short exact sequence gives the following exact sequence of cohomology groups
\[
0\to H^0\left(\Omega_{\widetilde X} ^{\otimes 2} (\widetilde{N}+\widetilde{\Sigma})\right)\to H^0 \left(\Omega_{\widetilde X} ^{\otimes 2} (2\widetilde{N}+\widetilde{\Sigma})\right)\to \C_{\widetilde \Sigma}^{\widetilde N}\to H^1\left(\Omega_{\widetilde X} ^{\otimes 2} (\widetilde N+\widetilde \Sigma)\right)\to\cdots
\]
\begin{lemma}
The cohomology group $H^1\left(\Omega_{\widetilde X} ^{\otimes 2} (\widetilde N+\widetilde \Sigma)\right)$ is $0$. 
\end{lemma}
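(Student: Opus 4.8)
The statement to prove is that $H^1\bigl(\Omega_{\widetilde X}^{\otimes 2}(\widetilde N+\widetilde\Sigma)\bigr)=0$. The plan is to reduce to the standard vanishing theorem for line bundles of sufficiently high degree on a smooth compact Riemann surface, applied componentwise to the normalization $\widetilde X$. Since $\widetilde X$ may be disconnected, I will work one connected component at a time; on a component $Y$ the sheaf in question is the line bundle $\Omega_Y^{\otimes 2}(D)$ where $D$ is the effective divisor cut out on $Y$ by the points of $\widetilde N$ and $\widetilde\Sigma$ lying on $Y$.

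First I would recall Serre duality: for a line bundle $L$ on a compact Riemann surface $Y$ of genus $g_Y$, $H^1(Y,L)\cong H^0\bigl(Y,\Omega_Y\otimes L^{-1}\bigr)^*$. Applying this with $L=\Omega_Y^{\otimes 2}(D)$ gives $H^1\bigl(Y,\Omega_Y^{\otimes 2}(D)\bigr)\cong H^0\bigl(Y,\Omega_Y^{-1}(-D)\bigr)^*$. So it suffices to show $\Omega_Y^{-1}(-D)$ has no nonzero global sections, and for that it is enough to check its degree is negative (a line bundle of negative degree on a compact Riemann surface has no nonzero sections). The degree of $\Omega_Y^{-1}(-D)$ is $-(2g_Y-2)-\deg D = 2-2g_Y-\deg D$.

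Next I would argue this is always negative. The stability hypothesis is exactly what makes this work: each component $Y^*$ of $X-Z-N$ is hyperbolic, i.e. $Y$ with the points of $D$ removed is hyperbolic, which by the area/Euler-characteristic bookkeeping of Lemma~\ref{eulerlemma} and Proposition~\ref{genusprop} forces $2-2g_Y-\deg D<0$ (equivalently $\chi(Y\setminus D)<0$, equivalently the number of special points on $Y$ exceeds $2-2g_Y$). Hence $\deg\Omega_Y^{-1}(-D)<0$, so $H^0\bigl(Y,\Omega_Y^{-1}(-D)\bigr)=0$, so $H^1\bigl(Y,\Omega_Y^{\otimes 2}(D)\bigr)=0$. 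Summing over the connected components of $\widetilde X$ gives $H^1\bigl(\Omega_{\widetilde X}^{\otimes 2}(\widetilde N+\widetilde\Sigma)\bigr)=0$, as desired.

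The only place requiring care — and the main (mild) obstacle — is bookkeeping the divisor $D$ on each component and confirming the degree inequality is strict in every case, including the edge cases of a rational component with few special points or an elliptic component; but stability is precisely the condition ruling out the bad cases ($\mathbb P^1$ with $\le 2$ special points, genus-$1$ with none), so the inequality $2-2g_Y-\deg D<0$ holds on every component without exception. Everything else is the standard line-bundle cohomology on smooth curves.
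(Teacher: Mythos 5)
Your proof is correct and, at bottom, takes the same key step as the paper: both apply Serre duality to replace $H^1\bigl(\Omega_{\widetilde X}^{\otimes 2}(\widetilde N+\widetilde\Sigma)\bigr)$ by $H^0\bigl(T_{\widetilde X}(-\widetilde N-\widetilde\Sigma)\bigr)$ (your $\Omega_Y^{-1}(-D)$ is exactly $T_Y(-D)$). Where you diverge is in establishing that this $H^0$ vanishes: the paper runs a genus-by-genus case analysis of holomorphic vector fields (none in genus $\geq 2$, only constants in genus $1$, and a $3$-dimensional space in genus $0$, each killed by the vanishing conditions imposed by stability), whereas you compute $\deg T_Y(-D)=2-2g_Y-\deg D$ and invoke the fact that a line bundle of negative degree on a compact Riemann surface has no nonzero sections, with stability giving $\deg D>2-2g_Y$ on every component. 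Your degree argument folds the three cases into one uniform inequality and is, if anything, cleaner; the paper's case analysis is more concrete and spells out exactly which vector fields exist in each genus. Both are complete proofs, and the stability hypothesis enters in the same place either way.
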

\begin{proof}
The proof is essentially by Serre Duality
\[
H^1\left(\Omega_{\widetilde X} ^{\otimes 2} (\widetilde N+\widetilde \Sigma)\right)\text\quad\text{is dual to}\quad H^0\left(T_{\widetilde X}^{\otimes 2} \otimes\Omega_{\widetilde X}\left(-\widetilde N-\widetilde \Sigma\right)\right)
\]
which is isomorphic to
\[
H^0\left(T_{\widetilde X}\left(-\widetilde N-\widetilde \Sigma\right)\right);
\]
this is just the space of holomorphic vector fields on $\widetilde X$ which vanish at points of $\widetilde N$ and $\widetilde \Sigma$. 

If $\widetilde X$ has genus $0$, then $|\widetilde N|+|\widetilde \Sigma|\geq 3$ as $X$ must be a stable curve. Then any vector field on $\widetilde X$ would have to vanish on $\widetilde N\cup \widetilde \Sigma$, which means it is necessarily the zero vector field. 

If $\widetilde X$ has genus $1$, then any holomorphic vector field is constant. Since $X$ is a stable curve, $|\widetilde N|+|\widetilde \Sigma|\geq 1$, and this vector field must vanish on $\widetilde N\cup \widetilde \Sigma$. Such a vector field is identically zero. 

If $\widetilde X$ has genus greater than $1$, there are no nonzero holomorphic vector fields. The result now follows. 
\end{proof}

We have a short exact sequence
\[
0\to H^0\left(\Omega_{\widetilde X} ^{\otimes 2} (\widetilde{N}+\widetilde{\Sigma})\right)\to H^0 \left(\Omega_{\widetilde X} ^{\otimes 2} (2\widetilde{N}+\widetilde{\Sigma})\right)\to \C_{\widetilde \Sigma}^{\widetilde N}\to 0
\]
The quantity we seek is 
\[
\dim\left(H^0 \left(\Omega_{\widetilde X} ^{\otimes 2} (2\widetilde{N}+\widetilde{\Sigma})\right)\right)=\dim\left(H^0\left(\Omega_{\widetilde X} ^{\otimes 2} (\widetilde{N}+\widetilde{\Sigma})\right)\right)+\dim\left(\C_{\widetilde \Sigma}^{\widetilde N}\right)
\]
Evaluating the sum on the right yields
\begin{eqnarray*}
\left[\left(\sum_i 3g(\widetilde X_i)-3\right)+|\widetilde N|+|\widetilde \Sigma|\right]+|\widetilde N| &=& -\frac{3}{2}\chi(\widetilde X)+4|N|+|\Sigma|\\
&=&-\frac{3}{2}\left(\chi(S)+2|N|\right)+4|N|+|\Sigma|\\
&=&-\frac{3}{2}(2-2g)+|N|+|\Sigma|\\
&=&3g-3+|N|+|\Sigma|,
\end{eqnarray*}
where the first sum is taken over all connected components $i$ of $\widetilde X$, $g(\widetilde X_i)$ is the genus of $\widetilde X_i$, and $S$ is a topological surface which marks $X$ (see Proposition \ref{genusprop}). 

Imposing the condition that the quadratic differentials must have equal residues at points of $\widetilde N$ which correspond to the same node, the dimension count drops by $|N|$, and we obtain 
\[
\dim\left(H^0 \left(\Omega_{\widetilde X} ^{\otimes 2} (2\widetilde{N}+\widetilde{\Sigma})\right)\right)=3g-3+|\Sigma|=3g-3+n
\]
as desired.
\end{proof}
In view of Proposition \ref{dimensionvsquaddiff}, it is extremely tempting to think that the vector spaces $E(t)$ are the fibers of a vector bundle over $T$.  This is indeed the case, but we have found it surprisingly difficult to prove. We cannot put parameters in the argument above because one cannot normalize families of curves.

We derive it from Grauert's direct image theorem found in \cite{grauert} (alternatively in \cite{adrien_bourbaki}), and a result characterizing locally free sheaves among coherent sheaves.  If $\cal F$ is a coherent sheaf on an analytic space $Z$, define the ``fiber dimension'' $\dim \cal F(z)$ to be the dimension of the finite-dimensional space  $H^0(\cal F \otimes_{\cal O_Z} \C_z)$ where $\C_z$ is the sky-scraper sheaf supported at $z$ whose sections are $\C$ viewed as an $\cal O_Z$-module by evaluating functions at $z$. Then $\cal F$ is locally free if and only if 
\[
z\mapsto  \dim\left(H^0({\cal{F}}\otimes_{\cal O_Z} \C_z)\right)
\]
is constant.  In that case, $\cal F$ is naturally the sheaf of sections of a vector bundle whose fibers are the spaces $H^0(\cal F \otimes_{\cal O_Z} \C_z)$.

To use these results, we need to build the sheaf $\cal F$ on $X$ defined as follows. Restricted to the smooth part $X^*$, it is the tensor square of the sheaf of relative differentials $\Omega_{X^*/T}^{\otimes 2}(\Sigma)$, that is, quadratic differentials on the fibers with at most simple poles on the marked points (which are the images of the sections $\sigma_i\in\Sigma$). Within a plumbing fixture $(\psi:V \to \D,  \tilde \psi:U \to \psi^*C)$ it is the space of multiples of $\tilde \psi^*\omega$, where
\[
\omega:=\frac 14 \left (\frac {dx}x-\frac {dy}y\right)^2,
\]
by analytic functions on $U$, that is, by elements of ${\mathcal O}_X(U)$. (This sheaf $\cal F$ is thoroughly discussed in \cite{scott_cliff}). 

Recall the locus
\[
C= \left\{ (x,y,t)\in \C^3 : xy=t \quad \text{and}\quad |x|<4, |y|<4, |t|<1\right\}.
\]
\begin{lemma} In the coordinates $(t,x)$ on $C-\{(x,y,t)\  |\ x=t=0\}$, the restriction of $\omega$ to vertical tangent vectors is $dx^2/x^2$, and in the coordinates $(t,y)$ on $C-\{(x,y,t)\  |\ y=t=0\}$, the restriction of $\omega$ to vertical tangent vectors is $dy^2/y^2$.
\end{lemma}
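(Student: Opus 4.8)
This is a direct computation built on the defining relation $xy=t$ of $C$, which says precisely that the function $xy$ is \emph{constant on each fiber} $C_t=\rho^{-1}(t)$. First I would record that on $C-\{(x,y,t)\mid x=t=0\}$ the pair $(t,x)$ is genuinely a coordinate system: on $C$ the vanishing of $x$ forces $t=xy=0$, so the set $C-\{x=t=0\}$ is exactly $\{x\neq 0\}$, and there $y=t/x$, so locally the fiber $C_t$ is the graph $\{(x,t/x)\}$ with coordinate $x$. Symmetrically, $(t,y)$ is a coordinate system on $C-\{y=t=0\}=\{y\neq 0\}$.

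Next I would restrict the $1$-form $\tfrac{dx}{x}-\tfrac{dy}{y}$ to vertical tangent vectors. Since $d(xy)=x\,dy+y\,dx$ and $xy=t$ is constant along fibers, every vertical tangent vector $v$ satisfies $0=d(xy)(v)=x\,dy(v)+y\,dx(v)$, i.e.\ $x\,dy=-y\,dx$ on vertical vectors. Hence, on the dense open subset where $xy\neq 0$, we get $\tfrac{dy}{y}=-\tfrac{dx}{x}$ on vertical vectors, so that
\[
\omega\big|_{\mathrm{vert}}=\tfrac14\left(\tfrac{dx}{x}-\tfrac{dy}{y}\right)^2\Big|_{\mathrm{vert}}=\tfrac14\left(2\,\tfrac{dx}{x}\right)^2=\frac{dx^2}{x^2},
\]
and equally $\omega\big|_{\mathrm{vert}}=\tfrac14\bigl(-2\,\tfrac{dy}{y}\bigr)^2=\dfrac{dy^2}{y^2}$ when expressed in the coordinates $(t,y)$.

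Finally I would upgrade this from "on the dense open set $xy\neq 0$" to "everywhere on $C-\{x=t=0\}$": both $\omega|_{\mathrm{vert}}$ and $dx^2/x^2$ are holomorphic sections of the square of the relative cotangent sheaf over $C-\{x=t=0\}$ (where $x$ is invertible), and they coincide on a dense open subset, hence coincide identically by the identity principle. The only mildly delicate point — and it is the closest thing to an obstacle here — is to be sure $dx^2/x^2$ is the right answer along the part of the singular fiber lying in $C-\{x=t=0\}$, i.e.\ where $y=0$ but $x\neq 0$; this is exactly what the identity-principle argument handles, or, alternatively, one can substitute $\tfrac{dy}{y}=\tfrac{dt}{t}-\tfrac{dx}{x}$ (obtained from $y=t/x$) and observe that the $\tfrac{dt}{t}$ term dies on vertical vectors. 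The symmetric statement for $(t,y)$ follows by interchanging the roles of $x$ and $y$.
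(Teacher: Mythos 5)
Your proof is correct and follows the same route as the paper: substitute the vertical-tangent relation $y\,dx+x\,dy=0$ (equivalently $yv+xw=0$ for a vertical vector $(v,w,0)$) into $\omega$ and observe the cancellation of $y$, yielding $dx^2/x^2$, with the extension to the locus $y=0$ handled by continuity. Note only that your parenthetical alternative $\tfrac{dy}{y}=\tfrac{dt}{t}-\tfrac{dx}{x}$ itself degenerates at $t=0$, so it does not in fact replace the identity-principle step; the cleaner route, which you give first and which the paper uses, is to keep $y$ in the expression so that the cancellation is visible before any division by $y$ is performed.
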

\begin{proof} On $C$, vertical tangent vector fields are written $(v,w,0)$ satisfying 
\[
yv+xw=0.
\]
Let us work in the coordinates $(t,x)$, valid except on the $y$ axis when $t=0$. In these coordinates, for $t\ne 0$, the quadratic form $\omega$ evaluates on the vector field $ (v,w,0)$ to give
\[
 \frac 14 \left (\frac {v}x-\frac {w}y\right)^2=  \frac 14 \left (\frac {v}x+\frac {yv}{xy}\right)^2=
 \left(\frac vx\right)^2.
 \]
 Thus $\omega$ restricts on the $x$-axis to the quadratic differential $dx^2/x^2$, and an identical computation shows that it restricts to the $y$ axis as $dy^2/y^2$.  
\end{proof}

It follows that on $U\cap X^*$ and restricted to vertical tangent vectors, the sheaves $\Omega_{X^*/T}^{\otimes 2}(\Sigma)$ and the sheaf of multiples of $\omega$ coincide, so our sheaf $\cal F$ is well-defined, and on each $X(t)$ it is the sheaf of quadratic differentials, holomorphic except that they are allowed simples poles at the $\Sigma(t)$ and double poles with equal residues at $N(t)$.

This is clearly a coherent sheaf in $X$, and since $p:X\to T$ is proper, $p_*\mathcal F$ is a coherent sheaf on $T$.  We saw in Proposition \ref{dimensionvsquaddiff} that the fibers have constant dimension, so  $p_*\mathcal F$ is locally free, that is, it is the sheaf of sections of an analytic vector bundle, which we denote as $Q^2_{X/T}$, and we have proven the following theorem. 

\begin{theorem}\label{vectorbundlethm}
The space $Q^2_{X/T}$ is an analytic vector bundle over $T$. 
\end{theorem}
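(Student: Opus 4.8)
The plan is to deduce Theorem~\ref{vectorbundlethm} from three inputs that have essentially already been assembled in the excerpt: (i) the sheaf $\cal F$ on $X$ is coherent; (ii) $p:X\to T$ is proper, so by Grauert's direct image theorem $p_*\cal F$ is a coherent sheaf on $T$; and (iii) the fibrewise-dimension criterion for local freeness of a coherent sheaf. The only genuinely non-formal ingredient is the identification of the fibre $(p_*\cal F)\otimes_{\cal O_T}\C_t$ with the space $E(t)$ of Proposition~\ref{dimensionvsquaddiff}, whose dimension is the constant $3g-3+n$; once that identification is in hand, criterion (iii) finishes the proof and exhibits $Q^2_{X/T}$ as the vector bundle whose fibre over $t$ is $E(t)$.

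Concretely, I would proceed as follows. First, recall the construction of $\cal F$: on $X^*$ it is $\Omega_{X^*/T}^{\otimes 2}(\Sigma)$, and on each plumbing fixture it is the ${\cal O}_X$-module of multiples of $\tilde\psi^*\omega$ with $\omega=\frac14(dx/x-dy/y)^2$; the lemma computing $\omega|_{\text{vert}}=dx^2/x^2$ (resp.\ $dy^2/y^2$) shows these two descriptions agree on the overlap $U\cap X^*$, so $\cal F$ is a well-defined sheaf, and it is coherent because locally it is either a line bundle on a manifold or a cyclic ${\cal O}_X$-module. Second, invoke Grauert: since $p$ is proper and $\cal F$ coherent, $p_*\cal F$ is coherent on $T$. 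Third, identify the fibre: the base-change map gives $H^0(\cal F\otimes_{\cal O_T}\C_t)\to H^0(X(t),\cal F|_{X(t)})$, and $\cal F|_{X(t)}$ is exactly the sheaf of quadratic differentials on $X(t)$ holomorphic on $X^*(t)$, with at most simple poles at $\Sigma(t)$ and at most double poles with equal residues at the node pairs of $N(t)$ — so its $H^0$ is $E(t)$, of dimension $3g-3+n$ for every $t$ by Proposition~\ref{dimensionvsquaddiff}. Fourth, apply the stated criterion: since $t\mapsto\dim H^0(\cal F\otimes_{\cal O_T}\C_t)$ is constant, $p_*\cal F$ is locally free, hence the sheaf of sections of an analytic vector bundle $Q^2_{X/T}\to T$ with fibres $E(t)$.

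The step I expect to be the main obstacle — or at least the one requiring the most care — is the fibre identification, i.e.\ showing that the algebraic base-change map $(p_*\cal F)\otimes_{\cal O_T}\C_t\to H^0(X(t),\cal F|_{X(t)})$ is actually an isomorphism onto $E(t)$, rather than merely injective or surjective. One must check both that $\cal F$ restricts correctly to the (possibly singular) fibre $X(t)$ — this is where the plumbing-fixture description matters, since near a node of $X(t)$ one needs $\cal F|_{X(t)}$ to be the cyclic sheaf generated by $\omega|_{X(t)}=dx^2/x^2=dy^2/y^2$ on the two branches, encoding precisely the ``double pole with equal residues'' condition — and that the semicontinuity/base-change machinery gives an isomorphism on fibres. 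In fact, since the map $T\ni t\mapsto\dim H^0(X(t),\cal F|_{X(t)})$ is already known to be constant by Proposition~\ref{dimensionvsquaddiff}, Grauert's base-change theorem yields that $p_*\cal F$ is locally free \emph{and} that base change holds, so the two facts bootstrap each other; I would phrase the argument to exploit this, citing \cite{grauert} (or \cite{adrien_bourbaki}) for the precise statement. Everything else — coherence of $\cal F$, properness of $p$, the dimension count — is either immediate or already proved above.
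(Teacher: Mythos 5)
Your proposal follows the paper's proof exactly: construct the coherent sheaf ${\cal F}$ on $X$ via the plumbing-fixture description and the form $\omega$, apply Grauert's direct image theorem to get coherence of $p_*{\cal F}$, and conclude local freeness from the constant fiber dimension established in Proposition~\ref{dimensionvsquaddiff}. You are in fact slightly more careful than the paper on the base-change step: the paper silently identifies the fiber dimension $\dim H^0\bigl((p_*{\cal F})\otimes_{{\cal O}_T}\C_t\bigr)$ with $\dim E(t)=\dim H^0\bigl(X(t),{\cal F}|_{X(t)}\bigr)$, whereas you correctly observe that this identification requires Grauert's base-change/comparison theorem, which applies here precisely because the latter dimension is constant by Proposition~\ref{dimensionvsquaddiff}.
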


\section{$\Gamma$-marked families}\label{Gammamarkedfamilysection}
Recall that a marking for a stable curve $(X,Z_X)$ by $(S,Z)$ is a continuous map $\phi:S\to X$ such that $\phi(Z)=Z_X$, and such that there exists a multicurve $\Gamma\subset (S,Z)$ so that $\phi$ induces an orientation-preserving homeomorphism $\phi_*:(S/\Gamma,Z)\to (X,Z_X)$. To emphasize that the multicurve $\Gamma$ has been collapsed, we refer to $\phi:S\to X$ as a $\Gamma$-marking of the stable curve $(X,Z_X)$ by $(S,Z)$. In this section, we introduce the notion of a $\Gamma$-marking for a proper flat family of stable curves. It is essential to note that a $\Gamma$-marking of a family $p:X\to T$ is not a family of $\Gamma$-markings; we cannot patch $\Gamma$-markings of the fibers together to form a marking of the family as there are monodromy obstructions. Instead, we adopt the following approach. 

\begin{definition}\label{HomeoDef}
Let $S$ be an oriented topological surface, $Z\subset S$ a finite subset and $\Gamma$ be a multicurve on $S-Z$. For every subset  $\Gamma'\subseteq \Gamma$, define $\Homeo(S,Z, \Gamma,\Gamma')$ to be the group of orientation-preserving proper homeomorphisms   of $S-[\Gamma']$ that fix $Z$ pointwise, map each component of $S-[\Gamma']$ to itself (fixing the boundary setwise), and are homotopic rel $Z$ to some composition of Dehn twists around  elements of $\Gamma-\Gamma'$.
\end{definition}

\begin{definition}\label{MarkDef}
 Let $(p:X\to T,\Sigma)$ be a proper flat family of stable curves, and define the space $\mymark$ together with the map 
\[
p_{(S,\Gamma)}:\mymark\to T
\]
in the following way. The fiber above a point $t\in T$ is the quotient of the space of $\Gamma'$-markings $\phi: (S,Z)\to (X(t),\Sigma(t))$ for some $\Gamma'\subseteq\Gamma$, so that $\phi$ maps the components of $\Gamma'$ to nodes of $X(t)$.

We quotient this set by the following equivalence relation: two such markings $\phi_1, \phi_2$ are equivalent if there exists $h\in\Homeo(S,Z, \Gamma,\Gamma')$ such that $\phi_1$ is homotopic  to $\phi_2\circ h$ on $S-[\Gamma']$, where the homotopy is among maps which are proper homeomorphisms $S-[\Gamma']\to X^*(t)$.

The space  $\Map_T(S,X)$ of maps of $S$ to a fiber of $p$ carries the compact-open topology, and after restricting and quotienting, gives the topology of $\mymark$.
\end{definition}

In the case where $\Gamma=\emptyset$, the space $\mathrm{Mark}_T^\emptyset(S,Z;X)$ is the set of isotopy classes of homeomorphisms of $(S,Z)$ to a fiber of $p$.  Of course, if any of the fibers of $p$ have nodes, then the corresponding fiber of  $p_{(S,\Gamma)}$ is empty.

\begin{example}\label{coverex}
{\em{ Consider the flat family of curves $p:X\to \D$ given by the projective compactification of 
\[
X:=\{(x,y,t)\in\C^2\times\D\;|\; y^2=(x-1)(x^2-t)\},\quad p:(x,y,t)\mapsto t.
\]
This is a family of stable curves of genus 1, with one marked point (at infinty). Let $S$ be the curve $X(1/4)$, let $Z=\{\infty\}$, and let the multicurve $\Gamma$ consist of the single curve $\gamma$ on $S-Z$ which is one of the two lifts of the circle $|x|=3/4$ (the two lifts are homotopic). The fibers of $\Mark T \Gamma S Z X$ are as follows:

The fiber above $t=0$ consists of a single point: there are homeomorphisms 
\[
(X(1/4)/\Gamma, \{\infty\}) \to (X(0),\{\infty\}),
\] and any two differ by precomposition by a power of $D_\gamma$, the Dehn twist around $\gamma$.  The same is true of the fiber above $t=1$.

But for all  $t\ne 0,1$, the fiber is a discrete set consisting of the homotopy classes of simple closed curves on $(X(t),\{\infty\})$.  In fact, $\Mark T \Gamma S Z X$ is a covering space of $\C-\{0,1\}$.  This covering space is highly nontrivial: its monodromy around $0$ is the Dehn twist $D_\gamma$ around $\gamma$, but the monodromy around a loop encircling $1$ is a Dehn twist around a different curve that intersects $\gamma$ at a single point. 
 }}
\end{example}

\begin{theorem}\label{CoverThm}
Let $(p:X \to T,\Sigma)$ be a proper flat family of stable curves. The map $p_{(S,\Gamma)}$ has discrete fibers, and there is a unique local section through every point in $\mymark$.
\end{theorem}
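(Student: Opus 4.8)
The plan is to analyze the map $p_{(S,\Gamma)}$ locally over $T$, reducing everything to the case where the base is a polydisk small enough to carry disjoint plumbing fixtures around the nodes of a chosen fiber. First I would fix a point $\xi \in \mymark$ lying over $t_0 \in T$, represented by a $\Gamma'$-marking $\phi_0 : (S,Z) \to (X(t_0),\Sigma(t_0))$ collapsing $\Gamma'\subseteq\Gamma$; the multicurve $\Gamma'$ is forced to be the one whose elements $\phi_0$ sends to the nodes of $X(t_0)$, and the remaining curves $\Gamma - \Gamma'$ go to (homotopy classes of) disjoint simple closed geodesics on $X^*(t_0)$. Shrink $T$ to a neighborhood $V_\epsilon$ as in the proof of Theorem~\ref{rhocontinuous}, so that over $V_\epsilon$ we have the $C^\infty$-trivialization $\Phi : V_\epsilon \times X'_\epsilon(t_0) \to X'_\epsilon$ identifying the thick parts of all fibers, where $X'_\epsilon$ is the complement of the plumbing-fixture regions $U_{c,\epsilon}$ around the nodes. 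The strategy is that this trivialization lets us transport the combinatorial/topological data of a marking from one fiber to a nearby one, giving the local section, while discreteness follows because markings are determined up to the discrete group $\Homeo(S,Z,\Gamma,\Gamma')$ by their underlying isotopy class.

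For discreteness of the fibers: two $\Gamma''$-markings of the same fiber $X(t)$ that are homotopic through proper homeomorphisms on $S - [\Gamma'']$ are, by definition, identified in $\mymark$; conversely, the set of isotopy classes of proper homeomorphisms $(S - [\Gamma''], Z) \to X^*(t)$ realizing a collapse of a sub-multicurve of $\Gamma$ is a discrete set (a torsor-like object under the mapping class group data), so each fiber of $p_{(S,\Gamma)}$ is discrete. Here I would invoke that $X^*(t)$ is a hyperbolic surface and that homotopy classes of multicurves and of homeomorphisms rel $Z$ form a discrete set, together with Proposition~\ref{finiteauts} and the structural results of Section~\ref{ATeichSect} (in particular Proposition~\ref{psi}) controlling how two markings collapsing the same multicurve differ, namely by an element of $\Homeo(S,Z,\Gamma,\Gamma')$, which is exactly what has been quotiented out.

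For existence and uniqueness of the local section through $\xi$: given $t$ near $t_0$, I build a $\Gamma''$-marking of $X(t)$ with $\Gamma'' \subseteq \Gamma'$ as follows. Those curves of $\Gamma'$ whose node has "opened" (i.e.\ the corresponding plumbing parameter $t_c \ne 0$) are no longer collapsed, and contribute to $\Gamma' - \Gamma''$; the curves of $\Gamma'$ with $t_c = 0$ stay collapsed, forming $\Gamma''$. On the thick part $X'_\epsilon(t)$ the trivialization $\Phi$ gives a canonical identification with $X'_\epsilon(t_0)$, hence (via $\phi_0$) with the corresponding part of $S/\Gamma''$; on each opened plumbing fixture the model family $\{xy = t_c\}$ is an annulus, and one extends the marking across it in the standard way (the identity on the boundary of an annulus extends to the annulus, as in the proof of Proposition~\ref{psi}). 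Any two such extensions differ by Dehn twists around the opened curves, i.e.\ by an element of $\Homeo(S,Z,\Gamma,\Gamma')$, so the resulting point of $\mymark$ is well-defined and depends continuously on $t$; this is the local section. Uniqueness: any continuous section agreeing with $\xi$ at $t_0$ must collapse, over each $t$, precisely the curves corresponding to unopened nodes (since which curves are short/collapsed is locally constant on the nose once we track the plumbing parameters), and must restrict on the thick part to the transported marking up to the quotient group, by continuity and the discreteness of the fibers.

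The main obstacle I expect is the careful bookkeeping of the plumbing fixtures: one must check that extending a marking across an opening node is \emph{consistent} with the trimmed-annulus topology on $\augteich$ and that the choices on different fixtures do not interact, and that the ambiguity introduced is exactly $\Homeo(S,Z,\Gamma,\Gamma')$ and not something larger (this is where the precise definition of that group, allowing only Dehn twists around $\Gamma - \Gamma'$, is essential). The continuity of the constructed section near $t_0$ — uniform over how many and which nodes open — is the delicate analytic point, and it rests on the same mechanism as Claims 1 and 2 in the proof of Theorem~\ref{rhocontinuous}: an opening node produces a short geodesic with a fat collar, so the marking on the thick part varies continuously and the annular pieces glue in without obstruction.
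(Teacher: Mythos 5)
Your construction of the local section follows the same route as the paper: trivialize the thick part, extend across opening nodes, and observe that the ambiguity is exactly an element of $\Homeo(S,Z,\Gamma,\Gamma')$. That part is fine (the paper works with trimmed annuli rather than directly with the $U_{c,\epsilon}$ regions of Theorem~\ref{rhocontinuous}, but the mechanism is identical).

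The gap is in your discreteness argument, which is in fact the more delicate half of the theorem. You assert that ``the set of isotopy classes of proper homeomorphisms $(S-[\Gamma''],Z)\to X^*(t)$ is a discrete set (a torsor-like object under the mapping class group data),'' but that conflates a countable index set with a discrete topology. The fiber $p_{(S,\Gamma)}^{-1}(t)$ carries the quotient of the compact-open topology on the mapping space, and there is no general reason for the set of homotopy classes of continuous maps to be discrete in that quotient (compare: the set of path components of $\mathbb{Q}$ in the quotient topology is not discrete). What has to be shown is that each equivalence class is \emph{open}: every $\Gamma'$-marking $\phi$ has a whole compact-open neighborhood of markings $\phi'$ equivalent to it. The paper does this concretely: take $r$ to be the injectivity radius of the thick part $X'(t)$ inside $X^*(t)$, restrict to the set of $\phi'$ with $d_{X^*(t)}(\phi(y),\phi'(y))<r$ for all $y\in\phi^{-1}(X'(t))$, and use the unique shortest geodesic $\gamma_y$ from $\phi(y)$ to $\phi'(y)$ to build an explicit homotopy $(y,s)\mapsto\gamma_y(s)$; one then checks this extends over $Z$ and over the collapsed curves by properness. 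Invoking Proposition~\ref{finiteauts} or Proposition~\ref{psi} does not supply this --- finiteness of $\aut(X,Z_X)$ is not what is at issue, and Proposition~\ref{psi} is about the algebraic kernel, not the topology on the mapping space. Relatedly, you identify the ``delicate analytic point'' as continuity of the section; in the paper's proof the section is immediate from the trivialization, and the analytic work is exactly the geodesic-straightening argument you omitted.
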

\begin{proof} We first prove that $p_{(S,\Gamma)}$ has discrete fibers. A space is discrete if its points are open, so we must show that the points of $\Mark T \Gamma S Z X(t):=p_{(S,\Gamma)}^{-1}(t)$ are open.  That is, every $\Gamma'$-marking $\phi:(S,Z)\to (X(t),\Sigma(t))$ has a neighborhood in the space of $\Gamma'$-markings of $X(t)$ such that every marking in the neighborhood is equivalent to $\phi$ by the Definition \ref{MarkDef}.  

Define
\[
X'(t):=X(t)-\bigcup_{c\in N(t)\cup \Sigma(t)} A_c
\]
where $A_c$ is the standard collar around $c$. The neighborhood of $\phi$ we will choose is 
\[
\left\{\phi':(S,Z)\to (X(t),\Sigma(t))\;|\;d_{X^*(t)}(\phi(y),\phi'(y))<r\text{ for all }y\in\phi^{-1}(X'(t))\right\},
\]
where $r$ is radius of injectivity of $X'(t)$ inside $X^*(t):=X(t)-N(t)-\Sigma(t)$, and $d_{X^*(t)}$ is the hyperbolic metric on this space. 

For all $y\in\phi^{-1}(X'(t))$, there exists a unique shortest geodesic $\gamma_y$ on $X^*(t)$ joining $\phi(y)$ to $\phi'(y)$. We will parametrize this geodesic at constant speed, so it takes time $1$ to get from $\phi(y)$ to $\phi'(y)$. Since the inclusion $X'(t)\hookrightarrow X(t)$ is a homotopy equivalence, the map $y\mapsto \gamma_y$ can be uniquely extended to all of $S-[\Gamma']$, fixing the points of $Z$, and as $y$ approaches $[\Gamma']$, the curve $\gamma_y$ approaches the corresponding node in $N(t)$, and as $y$ approaches $z\in Z$, the curve $\gamma_y$ approaches the corresponding point $\phi(z)\in\Sigma(t)$. 

Then the maps
\[
\phi|_{S-[\Gamma']}\quad\text{and}\quad \phi'|_{S-[\Gamma']}
\]
are homotopic by the homotopy 
\begin{equation}\label{eqn}
\left(S-Z-[\Gamma']\right)\times[0,1]\to S-Z-[\Gamma']\quad\text{given by}\quad (y,s)\mapsto \phi^{-1}(\gamma_y(s)).
\end{equation}

At all times $s$ the map in Line \ref{eqn} is a proper map $S-(Z\cup[\Gamma']) \to S-(Z\cup[\Gamma'])$ and it can be extended to $Z$ by the identity.

We now proceed with the proof that there is a section through every point in the space $\mymark$. Choose $t_0\in T$ and a neighborhood $T'\subseteq T$ of $t_0$ sufficiently small so that for $t'\in T'$, all nontrivial curves $\gamma(t')$ in $(X(t'),\Sigma(t'))$, that are homotopic to points in $p^{-1}(T')$, are homotopic to nodes of $(X(t_0),\Sigma(t_0))$.
 
 Choosing $T'$ smaller if necessarily, we may assume that there is a number $l_0$ such that for all $t'\in T'$, the simple closed curves on $(X(t'),\Sigma(t'))$ of length less than $l_0$ are precisely those homotopic to points in $p^{-1}(T')$.  Then the complements of the trimmed annuli around these curves form a manifold with boundary $\trimmed(X_{T'})\subseteq X$ and $p: \trimmed(X_{T'}) \to T'$ is a proper smooth submersion of manifolds with boundary, hence differentiably locally trivial, via a trivialization which makes the sections $\Sigma\subset X$ horizontal. 
 
 We must show that for every $t'\in T'$ and every $f: (S,Z) \to (X_{T'}(t'),\Sigma(t'))$ representing an element of $p_{(S,\Gamma)}^{-1}(t')$, there exists a section 
 \[
 \sigma_f:T' \to \Mark {T'} \Gamma S Z X|_{T'}
 \]
 coinciding at $t'$ with the class of $f$.
 
  There exists a homeomorphism 
 \[
 h_f:T'\times \Bigl(S-f^{-1}\bigl(A_{\Gamma'}'(X(t'))\bigr)\Bigr)\longrightarrow \trimmed(X_{T'})
 \]  
and the following diagram
\[
\xymatrix{
&T'\times \Bigl(S-f^{-1}(A_{\Gamma'}'\bigl(X(t'))\bigr)\Bigr)\ar[rrr]^{\qquad h_f}\ar[ddr]_{pr_1}  &  & &\trimmed(X_{T'})\ar[ddll]^{p} \\
&        &        &   &\\
&        &T'     & &}
\]
commutes. 

For any fixed $t''\in T'$, the restriction of $h_f$ to $t''\times \Bigl(S-f^{-1}\bigl(A_{\Gamma'}'(X(t''))\bigr)\Bigr)$ can be extended to $t''\times S$. The homotopy class of the extension is unique up to precomposition by a Dehn twist around elements of $\Gamma'$. We {\em{cannot}} choose this extension continuously with respect to the parameter $t''$ as there are monodromy obstructions; this does not matter. In any case, all extensions define the same element of $p^{-1}_{(S,\Gamma)}(t'')$; this constructs our section $\sigma_f$. 
\end{proof}

\begin{definition}\label{SgammaDef}
A $\Gamma$-marking of such a family $p:X\to T$ by $(S,Z)$ is a section of the map $p_{(S,\Gamma)}$. 
\end{definition}

\begin{remark}\label{GroCoverRemark}
{\em{Let $(p:X \to T,\Sigma)$ be a smooth proper family of curves. Grothendieck in \cite{groth} insisted on the difference between defining a marking as a homotopy class of topological trivializations $S\times T \to X$, and as a section of $p_S:\Mark T \emptyset S Z X \to T$.  It is clear that a marking in the first sense induces a marking in the second sense, but the converse is not so obvious.  It is perfectly imaginable that $T$ could have a cover $T=T_1\cup T_2$ and that there are trivializations above $T_1$ and $T_2$ that are fiber-homotopic above $T_1\cap T_2$, but that there is no trivialization above $T$.  Then the trivializations above $T_1$ and $T_2$ induce sections of $p_S:\Mark {T_1} \emptyset S Z {X|_{T_1}} \to T_1$  and   $p_S:\Mark {T_2} \emptyset S Z {X|_{T_2}} \to T_2$ that coincide on $T_1\cap T_2$.

Grothendieck further saw (his precise sentence is ``Il semble qu'on doive pouvoir montrer tr\`es \'el\'ementairement'') that the condition for the two definitions to coincide is that the group of diffeomorphisms of $S$ homotopic to the identity be contractible, and that this was also equivalent to the contractibility of Teichm\"uller space; this program was carried out by Earle and Eells \cite{EE}. So informally, one can define a marking of a smooth family as a fiber-homotopy class of trivializations. 

If $(p:X \to T,\Sigma)$ is a proper flat family of stable curves, no such simplistic approach is possible, and we must use sections as in Definition \ref{SgammaDef}. Even locally, there is usually no map $S\times T \to X$ giving a $\Gamma$-marking of each fiber of $p$.}}
\end{remark}

\subsection{A criterion for $\Gamma$-markability}

Example \ref{coverex} is not $\Gamma$-markable for any multicurve $\Gamma\subset S-Z$;  there are monodromy obstructions. We present necessary and sufficient conditions which ensure that a family $(p:X\to T,\Sigma)$ is markable. 

\begin{proposition}
Let $(p:X\to T,\Sigma)$ be a proper flat family of stable curves.  Then the family $(p:X\to T,\Sigma)$ is markable if and only if there exists  a closed subset $X'\subseteq X$, containing $\Sigma$, such that  each component of  $X(t)-X'(t)$ is homeomorphic either to an annulus or to two discs intersecting at a point, and such that $p:X'\to T$ is a  trivial bundle of surfaces with boundary, making $\Sigma$ horizontal. 
\end{proposition}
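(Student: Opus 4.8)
The plan is to prove both implications by extracting, or constructing, the closed subset $X'$ from the data of a $\Gamma$-marking and its interaction with the trimmed-annulus structure already developed in Section~\ref{StThmSect} and in the proof of Theorem~\ref{CoverThm}. For the easy direction, suppose $(p:X\to T,\Sigma)$ is markable, so there is a global section $\sigma:T\to\mymark$ of $p_{(S,\Gamma)}$. By Theorem~\ref{CoverThm} the fibers of $p_{(S,\Gamma)}$ are discrete and local sections are unique, so $\sigma(T)$ is an open-and-closed subset of $\mymark$ meeting each fiber in one point; restricting over a point $t$ and unwinding Definition~\ref{MarkDef}, $\sigma$ is represented near any $t_0$ by honest $\Gamma'$-markings $\phi_t:(S,Z)\to(X(t),\Sigma(t))$ that are coherent up to elements of $\Homeo(S,Z,\Gamma,\Gamma')$, i.e.\ up to Dehn twists supported in collars. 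The idea is then to set $X'(t):=\phi_t\bigl(S-f^{-1}(A'_{\Gamma'}(X(t)))\bigr)$, the image of the complement of the trimmed annuli around the curves of $\Gamma'$; this is well defined independently of the representative because two representatives differ by a homeomorphism that is the identity outside those annuli. Each component of $X(t)-X'(t)$ is then the image of a trimmed annulus, hence an annulus (when $\gamma$ remains a curve) or a pair of discs meeting at a point (when $\gamma$ has been pinched to a node), and the family $p:X'\to T$ is trivial with $\Sigma$ horizontal precisely because this is what the trivialization $\trimmed(X_{T'})\to T'$ of the proof of Theorem~\ref{CoverThm} provides, glued over $T$ using the global section.

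For the converse, suppose such an $X'$ exists. The point is that $X'$, being a trivial bundle of surfaces with boundary over $T$ with $\Sigma$ horizontal, carries a trivialization $h:T\times X'(t_0)\to X'$, and $X'(t_0)$ is homeomorphic to $S$ minus a collection of open annuli, one for each $\gamma\in\Gamma$ — here I would use the hypothesis that each complementary component of $X(t)$ is an annulus or a node-pair of discs to match the components of $X'(t_0)$ with the curves of $\Gamma$ and identify $X'(t_0)\cong S-[\Gamma']$ for the appropriate $\Gamma'\subseteq\Gamma$ recording which curves have pinched in the chosen fiber. For each $t$, the homeomorphism $x\mapsto h(t,x)$ followed by the inclusion $X'(t)\hookrightarrow X(t)$ extends, as in the last paragraph of the proof of Theorem~\ref{CoverThm}, to a $\Gamma'$-marking $f_t:(S,Z)\to(X(t),\Sigma(t))$ — the extension fills in each deleted annulus by an annulus or collapses it onto a node, and is unique up to Dehn twists around elements of $\Gamma'$, hence defines a well-defined point of the fiber of $p_{(S,\Gamma)}$. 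That these points vary continuously, i.e.\ assemble to a section of $p_{(S,\Gamma)}$, follows because $h$ is continuous on $T\times X'(t_0)$ and the extension procedure is canonical up to the $\Homeo(S,Z,\Gamma,\Gamma')$-action that Definition~\ref{MarkDef} quotients out; this is exactly the situation of Theorem~\ref{CoverThm}, where the impossibility of choosing the extension continuously "does not matter" because all extensions define the same element of the quotient fiber.

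The main obstacle is the gluing in the converse direction: one must be careful that the local trivializations of $X'$ (or the single global one, if $T$ is such that a global trivialization exists — which is the hypothesis) really do produce a \emph{global} section rather than merely local ones, and that the ambiguity in extending across the deleted annuli is consistently killed by passing to the quotient in Definition~\ref{MarkDef}. In other words, the crux is to check that the monodromy obstruction that genuinely prevents $\Gamma$-marking in Example~\ref{coverex} is exactly the failure of $p:X'\to T$ to be a trivial bundle, and nothing more; once $X'$ is trivial, the Dehn-twist ambiguities live in $\Homeo(S,Z,\Gamma,\Gamma')$ and are invisible in $\mymark$. I would handle this by working over a contractible $T'$ first (where triviality of $X'$ over $T'$ is automatic and the section is built as above), then observing that uniqueness of local sections from Theorem~\ref{CoverThm} forces the locally built sections to agree on overlaps, so they patch to the desired global section; the hypothesis that $p:X'\to T$ is globally trivial is what guarantees the overlaps are consistent and no monodromy survives.
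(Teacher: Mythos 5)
Your overall strategy coincides with the paper's: in the forward direction remove trimmed annuli to produce $X'$, and in the converse direction use the trivialization of $X'$ and extend over the complementary annuli, taking care only of equivalence classes in $\mymark$. However, there are two substantive issues worth flagging.

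In the forward direction you set $X'(t):=\phi_t\bigl(S-f^{-1}(A'_{\Gamma'}(X(t)))\bigr)$, removing trimmed annuli only around the curves of $\Gamma'$ (the ones collapsed at $t$). This gives a set whose complement in $X(t)$ consists only of node-pairs-of-discs, with no annular components; to match the statement you must remove trimmed annuli around \emph{all} of $\Gamma$, so that the non-collapsed curves contribute annuli to $X(t)-X'(t)$. Moreover, the paper does not use the trimmed annuli of Section~\ref{ATeichSect} as-is: it replaces the modulus $m/(2+2m^{1/2})$ removed from each end by $m/(2+2m)$, so that the boundary curves of $X'(t)$ are horocycles of length exactly $1$ uniformly as a curve pinches. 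This is what guarantees $p:X'\to T$ is a bundle of surfaces with boundary across fibers where nodes form; your write-up does not address why the unmodified $A'_\gamma$ (whose boundary curves behave quite differently as $l_\gamma\to 0$ versus at $l_\gamma=0$) would yield a trivial bundle.

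In the converse direction, your claimed identification $X'(t_0)\cong S-[\Gamma']$ cannot be right: $X'(t_0)$ is a compact surface with boundary, while $S-[\Gamma']$ is an open surface (and in any case the relevant complement should involve all of $\Gamma$, not just $\Gamma'$). The paper avoids this by not assuming $(S,\Gamma)$ in advance: it sets $S':=X'(t_0)$, manufactures $S$ by gluing one annulus onto $S'$ for each component of $X(t_0)-X'(t_0)$ (using that each such component has exactly two boundary circles), and takes $\Gamma$ to be the core curves of the glued annuli. With $S$ built this way, the extension of the trivialization $\Phi(t):S'\to X'(t)$ across the glued annuli gives the $\Gamma$-marking, unique up to Dehn twists as you say. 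Finally, your concern about patching local sections is moot here: the hypothesis is that $p:X'\to T$ is \emph{globally} trivial, so the extension over the annuli, being canonical modulo $\Homeo(S,Z,\Gamma,\Gamma')$, directly yields a global section of $p_{(S,\Gamma)}$ without any local-to-global argument.
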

\begin{proof} If $(p:X\to T,\Sigma)$ is $\Gamma$-markable for some multicurve $\Gamma$ on a surface $(S,Z)$, we can take $X'(t)$ to be the complement of the ``appropriately modified'' trimmed annuli around the curves of $\Gamma$. We modify a trimmed annulus in the following way: instead of removing annuli of modulus $m/(2+2m^{1/2})$ from both ends of the standard annulus as in Section \ref{ATeichSect}, we remove annuli of modulus $m/(2+2m)$ from both ends. In this case, the boundary of these new trimmed annuli are horocycles of length $1$; in particular, the length of the horocycles is greater than $0$ and less than $2$. 

For the converse, choose $t_0\in T$.  Let $S'=X'(t_0)$, and manufacture $S$ by gluing annuli to $S'$, one for each component of $X(t_0)-X'(t_0)$. Since these components all have exactly two boundary components, there is a natural way to do this. The multicurve $\Gamma$ for the marking is made up of the core curves of the annuli.

Since $X' \to T$ is trivial, we can find a homeomorphism
$\Phi:S'\times T \to X' $ commuting with the projections to $T$. For each $t\in T$ we can extend 
\[
\Phi(t):S'\times \{t\}\to X'(t)
\]
to a $\Gamma$-marking $S\times\{t\}\to X(t)$, that, on each annulus of $S-S'$ is either a homeomorphism or collapses the corresponding curve to a point.  This extension is only well-defined up to a Dehn twist, but gives a well-defined element of $\mymark(t)$.
\end{proof}

\begin{remark}\label{mark_plumb}
{\em{Let $\gamma_1$ and $\gamma_2$ be two simple closed curves on $S-Z$ which intersect, such that there is no multicurve $\Gamma\subset S-Z$ which contains simple closed curves $\delta_1$ and $\delta_2$ where $\delta_1$ is homotopic to $\gamma_1$ (rel $Z$) and $\delta_2$ is homotopic to $\gamma_2$ (rel $Z$). Let $(X_1,Z_1)$ be a stable curve marked by $(S,Z)$ so that $\phi_1:(S,Z)\to (X_1,Z_1)$ collapses $\{\gamma_1\}$ to the node of $X_1$, and let $(X_2,Z_2)$ be a stable curve marked by $(S,Z)$ so that $\phi_2:(S,Z)\to (X_2,Z_2)$ collapses $\{\gamma_2\}$ to the node of $X_2$. Let $p:X\to T$ be a proper flat family of stable curves. If $(X_1,Z_1)$ and $(X_2,Z_2)$ are fibers of $p$, then the family $p:X\to T$ is not $\Gamma$-markable, for any multicurve $\Gamma\subset S-Z$. 

We will see that any family constructed via {\em{plumbing}} (see Section \ref{plumbing}) will be $\Gamma$-markable, by construction.}}
\end{remark}

\section{Fenchel-Nielsen coordinates for families of stable curves}\label{FenchelNielsen}

Let $(S,Z)$ be a surface with marked points, $\Gamma$ a multicurve on $S-Z$, and let $(p:X \to T,\Sigma)$ be a $\Gamma$-marked family of stable curves. 

For all $\gamma\in \Gamma$, define the function $l_\gamma:T \to \R$  as follows: let the homeomorphisms $\phi_t:(S,Z) \to  (X(t),\Sigma(t))$ represent the $\Gamma$-marking of $(X(t), \Sigma(t))$, and let $l_\gamma(t)$ be the hyperbolic length of the geodesic on $(X(t), \Sigma(t))$ in the homotopy class of $\phi_t(\gamma)$; if $\phi_t$ collapses $\gamma$, then $l_\gamma(t)=0$. Note that $\phi_t$ is only defined up to Dehn twists around elements of $\Gamma$, but the homotopy class of $\phi_t(\gamma)$ is unchanged by such a Dehn twist, so we define the map $l_\gamma:T\to \R$ given by $t\mapsto l_\gamma(t)$. In this way, we use the $\Gamma$-marking of $(p:X\to T,\Sigma)$ to define the length function $l_\gamma$  for the family. 

If $\gamma$ is not collapsed by $\phi_t$, and if we choose appropriate basepoints, we can define a twist map
\[
\tau_\gamma:\left(T-\{t\in T: l_\gamma(t)=0\}\right)\to \R\quad \text{given by}\quad t\mapsto \tau_\gamma(t)
\]
where $\tau_\gamma(t)$ is the ``twist displacement'' (displacement is with respect to the basepoints - the maps $\tau_\gamma$ are somewhat unnatural because we must choose basepoints. A fairly careful treatment of these coordinates is in Chapter 7, Section 6 of \cite{teichbook}, in \cite{abikoffbook}, in \cite{buser}, and in \cite{scott82}). However, changing the marking $\phi_t$ by a power of a Dehn twist around $\gamma$ changes the twist displacement $\tau_\gamma(t)$ by some integer multiple of $l_\gamma(t)$; the monodromy prevents us from using the $\Gamma$-marking of $(p:X\to T,\Sigma)$ to define the twist displacement $\tau_\gamma$ for the family. However, we can modify the twist map, removing this ambiguity in the following proposition. 

Complete $\Gamma$ to a maximal multicurve $\widetilde\Gamma$. 
\begin{proposition}\label{FNcontinuous}
For all $\gamma\in\widetilde\Gamma$, 
\begin{enumerate}
\item the map $l_\gamma:T\to \R$ is continuous, and
\item the map $\tau_\gamma/ l_\gamma:\left(T-\{t\in T: l_\gamma(t)=0\}\right)\longrightarrow \R/\Z$
is well-defined and continuous. 
\end{enumerate}
\end{proposition}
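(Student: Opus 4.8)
The plan is to reduce both statements to the continuity theorem for the vertical hyperbolic metric, Theorem \ref{rhocontinuous}, which is the real engine here. The point of passing to a maximal multicurve $\widetilde\Gamma$ is that the family $(p:X\to T,\Sigma)$ remains $\widetilde\Gamma$-marked (any $\Gamma$-marking canonically determines a $\widetilde\Gamma$-marking after choosing, locally, how to complete), and over a maximal multicurve the lengths and twists of the $\widetilde\Gamma$-curves are genuine Fenchel--Nielsen coordinates on each fiber; the continuity of these as functions on $T$ is exactly what we must extract from the continuity of $\rho$.

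First I would handle $l_\gamma$. Fix $t_0\in T$. If $\gamma$ is collapsed at $t_0$, then $l_\gamma(t_0)=0$, and I must show $l_\gamma(t)\to 0$ as $t\to t_0$: this is the statement that when a node opens the associated geodesic becomes short, which is visible directly in a plumbing fixture $\psi_c:V_c\to\D$ at the corresponding node, where the core curve of $\{xy=t_c\}$ has hyperbolic length $O\!\left(1/\log(1/|t_c|)\right)$ as computed in the worked example following Theorem \ref{rhocontinuous}; continuity of $t\mapsto t_c$ (which is $\psi_c$) then gives the claim. If $\gamma$ is not collapsed at $t_0$, pick a point $x\in X^*(t_0)$ on the geodesic representing $\phi_{t_0}(\gamma)$ and a $C^\infty$-section $s$ through it; using the differentiable local triviality of $\trimmed(X_{T'})\to T'$ established in the proof of Theorem \ref{CoverThm}, transport the closed geodesic on $X^*(t_0)$ to nearby fibers as a smooth loop, measure its length there in the vertical metric $\rho_t$, and invoke Theorem \ref{rhocontinuous} to see this length varies continuously; since on each fiber the geodesic is the length-minimizer in its free homotopy class, upper semicontinuity is immediate, and lower semicontinuity follows because a short geodesic would, by the collar lemma, force a definite-size embedded annulus that survives under the $(1+\epsilon)$-quasiconformal comparison, contradicting that $\gamma$ is not among the curves that pinch at $t_0$. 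Thus $l_\gamma$ is continuous on all of $T$.

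For $\tau_\gamma/l_\gamma$, I work over the open set $\{l_\gamma>0\}$, where there is nothing subtle about the limit values but the issue is well-definedness: the twist displacement $\tau_\gamma(t)$ depends on a choice of basepoint and, more seriously, changing the $\widetilde\Gamma$-marking $\phi_t$ by the Dehn twist $D_\gamma$ alters $\tau_\gamma(t)$ by $l_\gamma(t)$. Hence $\tau_\gamma(t)/l_\gamma(t)$ is well-defined in $\R/\Z$ once we fix the basepoint data, because the only residual ambiguity in the $\widetilde\Gamma$-marking that affects $\tau_\gamma$ is precisely the $\Delta_{\widetilde\Gamma}$-action, whose effect on $\tau_\gamma$ is translation by $l_\gamma\Z$ (this is the content of Proposition \ref{psi}, identifying the kernel of $\Psi$ with the Dehn-twist subgroup). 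For continuity, I again use the smooth local trivialization of $\trimmed(X_{T'})\to T'$: it lets me choose the basepoints and the transverse arc defining $\tau_\gamma$ continuously in $t$, so that $\tau_\gamma$ itself is continuous as an $\R$-valued function \emph{locally}, modulo the $l_\gamma\Z$ indeterminacy; dividing by the (continuous, nonvanishing) $l_\gamma$ and reducing mod $\Z$ kills exactly this indeterminacy and yields a continuous map to $\R/\Z$. Concretely, continuity of $\tau_\gamma$ in the trivialization comes down to the fact that the Fenchel--Nielsen twist is computed from the hyperbolic geometry of the fibers (lengths of orthogeodesic segments between boundary curves of the pants adjacent to $\gamma$), and these hyperbolic quantities vary continuously by Theorem \ref{rhocontinuous} together with the standard continuous dependence of geodesics and their intersections on the metric on compact pieces.

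The main obstacle is the lower semicontinuity half of part (1): ruling out that a $\widetilde\Gamma$-curve which is \emph{not} collapsed at $t_0$ nevertheless has length tending to $0$ along some sequence $t_i\to t_0$. This is where one genuinely needs the collar lemma plus the structure of $\augteich$'s topology (the $(1+\epsilon)$-quasiconformal comparison away from trimmed annuli): a vanishing length forces an embedded collar of unbounded modulus, which is incompatible with the bounded-modulus trimmed annuli on $X(t_0)$ around exactly the curves that do pinch. Everything else is either a direct computation in a plumbing fixture (the collapsed case) or a bookkeeping argument about the ambiguity in the marking modulo Dehn twists, fed by the continuity of the vertical metric.
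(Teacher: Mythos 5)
Your proposal follows the paper's route exactly: both parts reduce to Theorem~\ref{rhocontinuous} together with uniqueness of the (possibly degenerate) geodesic in each free homotopy class, which is precisely the paper's two-sentence argument, and your extra detail is broadly sound. Two attributions deserve correction, though. First, the group producing the mod-$l_\gamma\Z$ ambiguity in $\tau_\gamma$ is $\Delta_\Gamma$, not $\Delta_{\widetilde\Gamma}$: the family carries a $\Gamma$-marking, so only Dehn twists around curves actually in $\Gamma$ are allowed changes of marking. For $\gamma\in\widetilde\Gamma-\Gamma$ the twist $\tau_\gamma$ is a genuine $\R$-valued function once basepoints are fixed and the reduction to $\R/\Z$ is vacuous, while for $\gamma\in\Gamma$ the $D_\gamma$-ambiguity alone forces the quotient; this is the distinction the paper draws when it writes ``when $\gamma\in\Gamma$.'' Second, your lower-semicontinuity step appeals to the $(1+\epsilon)$-quasiconformal comparison built into the topology of $\augteich$, but that is not the ambient setting here: $T$ is an arbitrary analytic-space base, not an open subset of $\augteich$. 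The fact you actually need --- that curves not collapsed at $t_0$ have length bounded away from $0$ on a neighborhood of $t_0$ --- is established directly in the proof of Theorem~\ref{CoverThm}, and also falls out of the differentiable trivialization with $\|\mu(t)\|\to 0$ in the proof of Theorem~\ref{rhocontinuous}; those are the right references. Neither slip damages the argument, since what you cite in their place is a subgroup (respectively a consequence) of what is correct, but the misattributions would confuse a reader trying to trace the logic.
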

\begin{proof}
The fact that $l_\gamma$ is continuous is a consequence of the fact that there is a unique geodesic in the homotopy class of $\gamma$ (allowing for degenerate geodesics), and Theorem \ref{rhocontinuous}. 

When $\gamma \in \Gamma$, the map $\tau_\gamma$ is only defined up to an integral multiple of $l_\gamma$, therefore $\tau_\gamma/l_\gamma$ is well-defined as long as $l_\gamma$ is nonzero. Continuity of $\tau_\gamma/ l_\gamma$ also follows from  the fact that there is a unique geodesic in the homotopy class of $\gamma$ (allowing for degenerate geodesics), and Theorem \ref{rhocontinuous}. 
\end{proof}

Proposition \ref{FNcontinuous} implies that the map $FN_\gamma:T \to \C$ defined by
\[
FN_\gamma(t)= l_\gamma(t)e^{2\pi i \tau_\gamma(t)/l_\gamma(t)}
\]
is well-defined and continuous.

\begin{remark}
{\em{Suppose that $T$ is an analytic manifold. Then in particular it is a differentiable manifold, and it makes sense to ask whether the Fenchel-Nielsen coordinates are differentiable.  It turns out that they are not, and the question of whether they can be modified to be differentiable is rather delicate, see \cite{wolfwolpert}.}}
\end{remark}

\section{The space $\Q_\Gamma$}\label{QGSect}
This section introduces  the main actor, the space $\Q_\Gamma$. This is the space which will eventually give $\augmod$ its analytic structure. Recall that the subgroup $\Delta_\Gamma$ of ${\mod}(S,Z)$ is generated by Dehn twists about the curves $\gamma\in\Gamma$.

Consider the space
\[
U_\Gamma:=\bigcup_{\Gamma'\subseteq\Gamma} \S_{\Gamma'}\subseteq \augteich. 
\]
Then the subgroup $\Delta_\Gamma\in\mod(S,Z)$ acts on $U_\Gamma$, and fixes $\S_\Gamma$ pointwise. 

\begin{definition}
The space $\Q_\Gamma$ is the quotient 
\[
\Q_\Gamma:=U_\Gamma/\Delta_\Gamma
\]
with the quotient topology inherited from $\augteich$. 
\end{definition}

Let $\Gamma$ be a multicurve on $S-Z$. Recall $\widetilde S^{\Gamma}$ from Section \ref{ATeichSect}; it is the topological surface where $S$ is cut along $\Gamma$, forming a surface with boundary, and then components of the boundary are collapsed to points. On this surface, we mark the points $\widetilde Z$ corresponding to $Z$, and the points $\widetilde N$ corresponding to the boundary components (two points for each element of $\Gamma$). The surface $\widetilde S^{\Gamma}$ might not be connected; in this case, 
\[
{\cal{T}}_{\left(\widetilde S^{\Gamma},\widetilde Z \cup \widetilde N\right)}
\]
is the product of the Teichm\"uller spaces of each component. In this way the stratum $\S_\Gamma$ is a ``little'' Teichm\"uller space, hence a complex manifold. 

\subsection{The strata of $\Q_\Gamma$}\label{Xgamma}
  
 Let us denote by ${{\Q}}_\Gamma^{\Gamma'}$ the image of the stratum ${\mathcal S}_{\Gamma'}$ in ${{\Q}}_\Gamma$.  Each ${{\Q}}_\Gamma^{\Gamma'}$ is  the quotient of $\S_{\Gamma'}$ by $\Delta_\Gamma$.

The subgroup $\Delta_\Gamma$ is a free abelian group on $\Gamma$; in particular
\[
\Delta_\Gamma= \Delta_{\Gamma-\Gamma'}\oplus \Delta_{\Gamma'}.
\]
The group $\Delta_{\Gamma'}$ acts trivially on $\S_{\Gamma'}$, and $\Delta_{\Gamma-\Gamma'}$ acts freely since all its elements except the identity are of infinite order, and any element of the mapping class group that fixes a point is of finite order. It also acts properly discontinuously, since the entire Teichm\"uller modular group does. Thus the strata
\[
\Q^{\Gamma'}_\Gamma= \S_\Gamma'/\Delta_{\Gamma-\Gamma'}
\]
are all manifolds.

 The space $\Q_\Gamma^{\Gamma'}$ parametrizes a smooth family of curves 
 \begin{equation*}
 \tilde p_\Gamma^{\Gamma'}:\widetilde X_\Gamma^{\Gamma'}\to {{\Q}}_\Gamma^{\Gamma'}
 \end{equation*}
  with a marking by the surface $(\widetilde S^{\Gamma'}, \widetilde Z\cup \widetilde N)$ determined up to Dehn twists around elements of $\Gamma-\Gamma'$. If we identify the pairs of marked points of $\widetilde X_\Gamma^{\Gamma'}$ corresponding to the elements of $\Gamma'$, we obtain a proper flat family 
   \begin{equation*}
(p_\Gamma^{\Gamma'}: X_\Gamma^{\Gamma'}\to {{\Q}}_\Gamma^{\Gamma'},\Sigma)
 \end{equation*}
of stable curves (in this case topologically locally trivial; none of the double points is being ``opened'').  

Let us denote by $\alpha_{\gamma},\gamma \in \Gamma'$ the analytic section 
 \begin{equation*}
 {{\Q}}_\Gamma^{\Gamma'} \to X_\Gamma^{\Gamma'}
 \end{equation*}
 of $p_\Gamma^{\Gamma'}$ going through the double point corresponding to $\gamma$.
 \begin{example}\label{revisit}
{\em{We revisit the case of the torus with one marked point as discussed in Example \ref{torus}. That is, let $S=\C/\Lambda_i$, let $Z=\{0\}$, and let $X_\tau=\C/\Lambda_\tau$, where $\Lambda_\tau$ is the lattice generated by $1$ and $\tau$, where $\mathrm{Im}(\tau)>0$. The augmented Teichm\"uller space of $(S,Z)$ is $\H\cup(\mathbb{Q}\cup\{\infty\})$, where the curve of slope $p/q$ on $S$ corresponds to the boundary component $-q/p\in\augteich$ as discussed in Example \ref{torus}. Let $\Gamma=\{\gamma\}$ be a multicurve on $S-Z$ where $\gamma$ is the curve corresponding to slope $0/1$. The set 
\[
U_\Gamma=\S_\emptyset\cup\S_\gamma=\H\cup\{\infty\},
\]
and the group $\Delta_\Gamma$ is the subgroup of $\mod(S,Z)$ generated by a Dehn twist about the curve $\gamma$; it is isomorphic to $\Z$, generated by the translation $z\mapsto z+1$. Thus
\[
\Q_\Gamma=\left(\H\cup\{\infty\}\right)/\Z=\D, \quad \text{given by}\quad z\mapsto e^{2\pi i z}.
\]
The stratum $\H$ maps to $\D^*$, and the stratum $\{\infty\}$ maps to $0$. Notice that $\Q_\Gamma$ is a complex manifold. }}
\end{example}
\subsection{A natural $\Gamma$-marking}\label{marking} By the universal property of Teichm\"uller space, each fiber of the universal curve 
 \[
 \widetilde{X}_\Gamma ^{\Gamma}\to {\cal{T}}_{\left(\widetilde S^{\Gamma},\widetilde Z \cup \widetilde N\right)}
\]
comes with a homotopy class of maps 
\[
\phi:(\widetilde S^\Gamma,\widetilde Z \cup \widetilde N)\to  (\widetilde{X}_\Gamma ^{\Gamma}(t),\Sigma(t)).
\]
This induces a $\Gamma$-marking, well-defined up to Dehn twists around the curves of $\Gamma$, such that the following diagram commutes:
\[
\xymatrix{
(S,Z)\ar[rr]\ar[rd] & &({X}_\Gamma ^{\Gamma}(t),\Sigma(t))\\
 &(S/\Gamma,Z)\ar[ru]
 }
 \]
 \subsection{The topology of $\Q_\Gamma$}
 Complete $\Gamma$ to a maximal multicurve $\widetilde \Gamma$. 
 On each stratum $\Q_\Gamma^{\Gamma'}$, we define the map 
 \[
FN_\Gamma^{\Gamma'}:\Q_\Gamma^{\Gamma'}\longrightarrow \left(\R_+\times\R\right)^{\widetilde\Gamma-\Gamma}\times\C^\Gamma,
\]
where the $\Gamma'$ coordinates in $\C^\Gamma$ are exactly those which are $0$. The following theorem can be found in \cite{abikoffbook} and 
\cite{harveydiscrete}.
\begin{theorem}
The map 
\[
FN_\Gamma:\Q_\Gamma\longrightarrow \left(\R_+\times\R\right)^{\widetilde\Gamma-\Gamma}\times\C^\Gamma
\]
given by $FN_\Gamma^{\Gamma'}$ on the stratum $\Q_\Gamma^{\Gamma'}$ is a homeomorphism. 
\end{theorem}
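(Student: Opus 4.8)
The plan is to build the homeomorphism $FN_\Gamma$ stratum by stratum and then verify continuity across the strata using the continuity results already established. First I would recall that for a maximal multicurve $\widetilde\Gamma$, classical Fenchel--Nielsen theory (Chapter 7 of \cite{teichbook}, or \cite{abikoffbook}, \cite{buser}) gives a real-analytic diffeomorphism ${\cal T}_{(\widetilde S^{\Gamma'},\widetilde Z\cup\widetilde N)} \to (\R_+\times\R)^{\widetilde\Gamma'}$, where $\widetilde\Gamma'$ is the image of $\widetilde\Gamma$ in $\widetilde S^{\Gamma'}$; the curves of $\Gamma-\Gamma'$ among these contribute pairs $(\ell_\gamma,\theta_\gamma)$ with $\ell_\gamma>0$, while the curves of $\Gamma'$ have been collapsed. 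Quotienting by $\Delta_{\Gamma-\Gamma'}$ (which acts on the maximal-multicurve FN coordinates by integer translations $\theta_\gamma\mapsto\theta_\gamma+\ell_\gamma$ in exactly the $\Gamma-\Gamma'$ twist coordinates, and trivially elsewhere) converts those pairs into the punctured-plane coordinate $\ell_\gamma e^{2\pi i\theta_\gamma/\ell_\gamma}\in\C^*$. Thus on each stratum $\Q_\Gamma^{\Gamma'}=\S_{\Gamma'}/\Delta_{\Gamma-\Gamma'}$ the map $FN_\Gamma^{\Gamma'}$ is already a homeomorphism onto $(\R_+\times\R)^{\widetilde\Gamma-\Gamma}\times(\C^*)^{\Gamma-\Gamma'}\times\{0\}^{\Gamma'}$, and as $\Gamma'$ ranges over subsets of $\Gamma$ the images partition the target set $(\R_+\times\R)^{\widetilde\Gamma-\Gamma}\times\C^\Gamma$. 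So $FN_\Gamma$ is a bijection, and it remains only to check that it and its inverse are continuous.

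Next I would establish continuity of $FN_\Gamma$ on all of $\Q_\Gamma$. The components landing in $(\R_+\times\R)^{\widetilde\Gamma-\Gamma}$ are the honest length/twist coordinates $(\ell_\gamma,\tau_\gamma)$ for $\gamma\in\widetilde\Gamma-\Gamma$; since no curve of $\widetilde\Gamma-\Gamma$ degenerates anywhere on $U_\Gamma$, these are continuous on $U_\Gamma$ — and $\Delta_\Gamma$-invariant — by the same unique-geodesic argument and Theorem \ref{rhocontinuous} used in Proposition \ref{FNcontinuous}, hence descend continuously to $\Q_\Gamma$. For $\gamma\in\Gamma$ the relevant component is the $FN_\gamma(t)=\ell_\gamma(t)e^{2\pi i\tau_\gamma(t)/\ell_\gamma(t)}$ of Section \ref{FenchelNielsen}, interpreted as $0$ on the locus $\ell_\gamma=0$; continuity of $\ell_\gamma$ (Proposition \ref{FNcontinuous}(1), plus $\ell_\gamma\to 0$ precisely as one approaches $\S_{\Gamma'}$ with $\gamma\in\Gamma'$, which is the content of the topology on $\augteich$) together with boundedness of the phase factor gives continuity of $FN_\gamma$ at the degenerate locus, while away from it continuity and $\Delta_\Gamma$-invariance are Proposition \ref{FNcontinuous}(2). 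So $FN_\Gamma$ is continuous.

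For continuity of the inverse I would argue locally. Fix a point of $\Q_\Gamma$ lying in $\Q_\Gamma^{\Gamma'}$, with image having $\C^{\Gamma'}$-coordinates equal to $0$ and the remaining coordinates prescribed. A neighborhood basis of the image point in the target consists of products of small Euclidean balls in the $(\R_+\times\R)^{\widetilde\Gamma-\Gamma}$ and $\C^{\Gamma-\Gamma'}$ factors with small polydisks $\{|w_\gamma|<\epsilon\}$ in the $\C^{\Gamma'}$ factors; I would show its preimage is open by exhibiting, for each such target neighborhood, a basic neighborhood of the point in $\Q_\Gamma$ (pulled back from the $\augteich$-topology: short geodesics in $\Gamma_0-\Gamma'$ short, quasiconformal control away from collars) that maps into it. Concretely, "$|w_\gamma|<\epsilon$" translates into "$\ell_\gamma<\epsilon$", i.e. the corresponding curve is short, which is exactly the shape of the basic open sets in $\augteich$; and on the non-degenerate curves, closeness of FN-coordinates is equivalent (via the classical FN charts on the little Teichm\"uller spaces, uniformly as one approaches the boundary) to the required quasiconformal closeness. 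Conversely each basic neighborhood in $\Q_\Gamma$ contains such a product preimage.

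The main obstacle is the last step — matching the $\augteich$-topology against the product topology on the Fenchel--Nielsen target near the boundary strata, uniformly in all of $\Gamma$ at once. Everything else is either the classical interior FN theorem or a direct consequence of Theorem \ref{rhocontinuous} and Proposition \ref{FNcontinuous}; but verifying that "$\ell_\gamma$ small $+$ FN-coordinates close on the thick part" is genuinely equivalent to "$(1+\epsilon)$-quasiconformal on the complement of trimmed collars" requires care with the collar lemma and with how the trimming interacts with the pants decomposition as several curves degenerate simultaneously. This is essentially the content of the cited references \cite{abikoffbook} and \cite{harveydiscrete}, and I would lean on them for the uniform estimates while supplying the bookkeeping that organizes the strata of $\Q_\Gamma$.
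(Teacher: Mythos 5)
Your proposal is correct and matches the paper's approach in substance: the paper's (very compressed) proof simply asserts that the lift $U_\Gamma\to(\R_+\times\R)^{\widetilde\Gamma-\Gamma}\times\C^\Gamma$ is continuous and open and that $FN_\Gamma$ is bijective, then pushes these properties down through the quotient diagram $U_\Gamma\to\Q_\Gamma$, while you make the same continuity (via Theorem \ref{rhocontinuous} and Proposition \ref{FNcontinuous}), bijectivity (stratum-by-stratum via the $\Delta_{\Gamma-\Gamma'}$-action), and openness/inverse-continuity (matching $\augteich$-neighborhoods against the FN product topology) arguments explicit on $\Q_\Gamma$ itself. Since the basic open sets of $\Q_\Gamma$ are images of those of $U_\Gamma$ anyway, your direct verification and the paper's quotient-diagram bookkeeping amount to the same argument.
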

\begin{proof}
The map 
\[
U_\Gamma\longrightarrow \left(\R_+\times\R\right)^{\widetilde\Gamma-\Gamma}\times\C^\Gamma
\]
is continuous and open, and the map $FN_\Gamma$ is bijective.
Additionally, the following diagram commutes,
\[
\xymatrix{
&{U_\Gamma}\ar[rr]\ar[rd] &   &\Q_\Gamma\ar[ld]\\
&  & {\left(\R_+\times\R\right)^{\widetilde\Gamma-\Gamma}\times\C^\Gamma} }
\]
and the theorem follows. 
\end{proof}
\begin{corollary}
The space $\Q_\Gamma$ is a topological manifold of dimension $6g-6+2|Z|$. 
\end{corollary}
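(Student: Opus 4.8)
The plan is to read the statement off directly from the homeomorphism $FN_\Gamma$ established in the preceding theorem, so that the only genuine work is a dimension count. First I would observe that the target $\left(\R_+\times\R\right)^{\widetilde\Gamma-\Gamma}\times\C^\Gamma$ is manifestly a topological manifold without boundary: the half-line $\R_+$ is open, so $(s,\theta)\mapsto(\log s,\theta)$ is a homeomorphism $\R_+\times\R\to\R^2$, while $\C\cong\R^2$ canonically; hence the target is homeomorphic to the Euclidean space $\R^{\,2|\widetilde\Gamma-\Gamma|+2|\Gamma|}=\R^{\,2|\widetilde\Gamma|}$. Since $FN_\Gamma$ is a homeomorphism, $\Q_\Gamma$ inherits the structure of a topological manifold of dimension $2|\widetilde\Gamma|$.

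It then remains to identify $2|\widetilde\Gamma|$ with $6g-6+2|Z|$. Because $\widetilde\Gamma$ was chosen to be a maximal multicurve on $S-Z$, Proposition \ref{count} gives $|\widetilde\Gamma|=3g-3+|Z|$, and therefore $\dim\Q_\Gamma=2(3g-3+|Z|)=6g-6+2|Z|$. Splitting the count as $2|\widetilde\Gamma-\Gamma|+2|\Gamma|=2\bigl((3g-3+|Z|)-|\Gamma|\bigr)+2|\Gamma|$ also makes visible that the answer depends neither on $\Gamma$ nor on the chosen completion $\widetilde\Gamma$, which is reassuring since $\Q_\Gamma$ itself does not.

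There is no substantive obstacle at this stage; all the difficulty has already been absorbed into the preceding theorem (hence ultimately into Proposition \ref{FNcontinuous} and the continuity statement Theorem \ref{rhocontinuous}). The only minor point worth stating explicitly is that \emph{manifold} here is meant without boundary, which is precisely what a product of copies of $\R_+\times\R$ and $\C$ supplies.
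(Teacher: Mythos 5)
Your proof is correct and is exactly the intended reading of the preceding theorem: since $FN_\Gamma$ is a homeomorphism onto $(\R_+\times\R)^{\widetilde\Gamma-\Gamma}\times\C^\Gamma\cong\R^{2|\widetilde\Gamma|}$ and Proposition \ref{count} gives $|\widetilde\Gamma|=3g-3+|Z|$, the dimension count follows. The paper leaves the corollary without a written proof, and your argument supplies precisely the details it omits.
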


\section{Plumbing coordinates}\label{plumbing}

It is unfortunately quite difficult to visualize the complex structure of $\Q_\Gamma$ in the Fenchel-Nielsen coordinates. Instead, we will use plumbing coordinates. Our treatment of plumbing coordinates coincides with that in Section 2 of \cite{masur}, and that in Section 2 of \cite{scott90}. 

 \subsection{The set up}
 
 Recall that $C_t$ is the part of the curve of equation $xy=t$ in $\C^3$ where $|x|<4, |y|<4$, so that $C_0$ is the corresponding part of the union of the axes.  
 
 Choose $u_0\in \Q_\Gamma^{\Gamma}$. Since $\widetilde X^\Gamma_\Gamma$ is smooth over $\Q^\Gamma_\Gamma$, there exist locally ``local coordinates with parameters'': families of analytic charts $\phi_u:\D \to \widetilde X^\Gamma_\Gamma(u)$ that vary analytically with $u$.  This is true in particular near the pair of sections $\tilde \alpha'_\gamma, \tilde \alpha''_\gamma$ of $\tilde p_\Gamma^\Gamma$ corresponding to the node coming from $\gamma$: for each such pair of sections, we can choose $\phi'_{\gamma,u}, \phi''_{\gamma,u}$, so that 
 \[
 \phi'_{\gamma,u}(0)=\tilde \alpha'(u),\quad \phi''_{\gamma,u}(0)=\tilde \alpha''(u).
 \]
We use these to map one branch through a node to the $x$-axis, and the other to the $y$-axis.
 
 More formally, there exists a neighborhood $U$ of $u_0$ in $\Q_\Gamma^{\Gamma}$, disjoint neighborhoods $W_\gamma\subseteq X_\Gamma^\Gamma$ of $\alpha_\gamma(U)$  and isomorphisms 
\[ 
\psi_\gamma:W_\gamma \to U\times C_0
\]
 commuting with the projections to $U$. We may choose the $W_\gamma$ disjoint from $\Sigma$. 
 
  \begin{remark}
 {\em{Smoothness only gives coordinates with parameters {\it locally\/}, hence the restriction to an open $U\subseteq \Q^\Gamma_\Gamma$. It would be nice if we could take $U={{\Q}}_\Gamma^\Gamma$ and not  a proper subset.  Unfortunately, this is not possible: it contradicts \cite{hubbardthesis}, since it would allow  us to find sections of $\tilde p_\Gamma^\Gamma:\widetilde X_\Gamma^\Gamma\to \Q_\Gamma^\Gamma$ disjoint from the the given sections.}}
\end{remark}

\subsection{The complex manifold $\P_\Gamma$} Let ${\P}_\Gamma= U\times \D^\Gamma$.  The space $\P_\Gamma$ is of course a complex manifold, and it is a union of strata 
\[
 {\P}_\Gamma=\bigcup_{\Gamma'\subseteq \Gamma} {\P}_\Gamma^{\Gamma'}
\]
 where 
\[
 {\P}_\Gamma^{\Gamma'}=\{(u,{\mathbf t})\in U\times \D^\Gamma\;|\;t_\gamma=0 \iff \gamma\in \Gamma'\}.
\] 
  \subsection{The plumbed family}
 
 The space ${\P}_\Gamma$ naturally parametrizes a proper flat family of curves $Y_\Gamma$ whose fiber above $(u, {\mathbf t})$ is constructed as follows.
 
 Let $X_\Gamma'$ be the part of $X_\Gamma^\Gamma$ where we have removed the parts of all the $W_\gamma$ where $|x|\le 2, |y|\le 2$ (in some plumbing fixture);  $W'_\gamma$ is $W_\gamma$ with the same part removed.
 
 Then 
\[
 Y_\Gamma(u, {\mathbf t})= \left(X'_\Gamma(u) \sqcup \bigsqcup_{\gamma\in \Gamma} C_{t_\gamma}\right)/\sim
\]
where $\sim$ identifies 
\begin{equation}\label{plumbeqn} 
w\in W'_\gamma(u) \quad  \text {to}\quad 
\begin{cases}
\left(\psi_{\gamma,1}(w), \frac{t_\gamma}{\psi_{\gamma,1}(w)}\right)\in C_{t_\gamma}
&\  \text{if $\psi_{\gamma,1}(w)\ne 0$}
\\
\left(\frac{t_\gamma}{\psi_{\gamma, 2}(w)},\psi_{\gamma, 2}(w)  \right)\in C_{t_\gamma}
&\  \text{if $\psi_{\gamma,2}(w)\ne 0$}
\end{cases}
\end{equation}
$\psi_{\gamma,1}$ and $\psi_{\gamma,2}$ are the two coordinates of $\psi_\gamma$. 
This construction is illustrated in Figure \ref{plumbingfig}.
\begin{figure}[h] 
   \centering
   \includegraphics[width=5in]{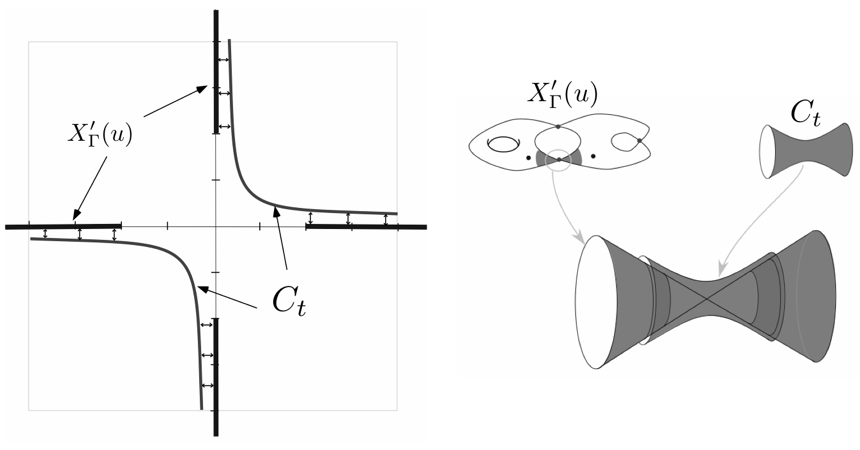} 
   \caption{Two views of plumbing: the picture on the left shows the identifications given in Equation \ref{plumbeqn} to create the surface $Y_\Gamma(u,\t)$. For $t\neq 0$, $C_t$ is an annulus of modulus $\frac{1}{2\pi}\log\frac{16}{|t|}$. The picture on the right is a different representation of the same plumbing construction around a node of $X_\Gamma'(u)$.}
   \label{plumbingfig}
\end{figure}

\section{The coordinate $\Phi$}\label{phisect}

The curves $Y_\Gamma({u,{\mathbf t}})$ were defined to fit together to form a proper flat family of curves parametrized by $\P_\Gamma$  with analytic sections $\Sigma$.

Let $\widetilde\Gamma$ be a maximal multicurve on $S-Z$ containing $\Gamma$. Then using the marking of $X^\Gamma_\Gamma(u_0)$ defined in Section \ref{marking}, all the curves $\widetilde \Gamma -\Gamma$ have well-defined homotopy classes on all $Y_\Gamma({u,{\mathbf t}})$, as do the curves of $\Gamma$, except that they may be collapsed to points.

As such, the Fenchel-Nielsen coordinates 
\[
(l_\gamma, \tau_\gamma),\ \gamma \in \widetilde \Gamma-\Gamma; \quad
l_\gamma e^{2\pi i \tau_\gamma/l_\gamma},\ \gamma\in \Gamma
\]
are well defined on ${\P}_\Gamma$, and define a map $\Phi:{\P}_\Gamma \to {\mathcal{Q}}_\Gamma$.

\begin{proposition}
The map $\Phi$ is continuous.
\end{proposition}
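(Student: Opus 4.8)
The plan is to show that $\Phi:\P_\Gamma\to\Q_\Gamma$ is continuous by using the homeomorphism $FN_\Gamma:\Q_\Gamma\to(\R_+\times\R)^{\widetilde\Gamma-\Gamma}\times\C^\Gamma$ established in the previous section. Since $FN_\Gamma$ is a homeomorphism, it suffices to prove that the composition $FN_\Gamma\circ\Phi:\P_\Gamma\to(\R_+\times\R)^{\widetilde\Gamma-\Gamma}\times\C^\Gamma$ is continuous, and this composition is given coordinatewise by the Fenchel-Nielsen data: for $\gamma\in\widetilde\Gamma-\Gamma$ the pair $(l_\gamma,\tau_\gamma)$ evaluated on the plumbed family $Y_\Gamma\to\P_\Gamma$, and for $\gamma\in\Gamma$ the complex number $l_\gamma e^{2\pi i\tau_\gamma/l_\gamma}$ on the same family. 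So the whole statement reduces to the assertion that these Fenchel-Nielsen coordinates depend continuously on the parameter $(u,\t)\in U\times\D^\Gamma$.

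First I would invoke Proposition \ref{FNcontinuous}: for any $\Gamma$-marked proper flat family of stable curves $(p:X\to T,\Sigma)$, the length functions $l_\gamma$ are continuous on $T$ and the maps $\tau_\gamma/l_\gamma$ are continuous (into $\R/\Z$) on the locus where $l_\gamma\neq 0$. The point is therefore to check two things: (i) that the plumbed family $Y_\Gamma\to\P_\Gamma$ really is a $\Gamma$-marked proper flat family of stable curves in the sense required, so that Proposition \ref{FNcontinuous} applies, and (ii) that the particular combination $l_\gamma e^{2\pi i\tau_\gamma/l_\gamma}$ extends continuously across the bad locus $\{l_\gamma=0\}=\{t_\gamma=0\}$, taking the value $0$ there. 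For (i), the plumbing construction of Section \ref{plumbing} produces $Y_\Gamma$ as a proper flat family (the local model near each glued node is exactly $C_{t_\gamma}\subset\{xy=t_\gamma\}$, which is the defining local model for flatness), with the analytic sections $\Sigma$ inherited from $X'_\Gamma$, and the marking of $X^\Gamma_\Gamma(u_0)$ from Section \ref{marking} propagates to a $\Gamma$-marking of the whole family since all curves of $\widetilde\Gamma-\Gamma$ have well-defined homotopy classes on every fiber (as noted just before the proposition) — this is precisely the point made in Remark \ref{mark_plumb} that plumbed families are $\Gamma$-markable by construction. For (ii), I would argue that as $t_\gamma\to 0$ the geodesic in the homotopy class of $\gamma$ has length $l_\gamma\to 0$ (its collar opens up into the plumbing annulus $C_{t_\gamma}$ of modulus $\frac{1}{2\pi}\log\frac{16}{|t_\gamma|}\to\infty$), so $l_\gamma e^{2\pi i\tau_\gamma/l_\gamma}$ has modulus tending to $0$ regardless of the (bounded) argument, hence the map extends continuously by $0$; away from $t_\gamma=0$ continuity is just Proposition \ref{FNcontinuous} together with continuity of $(l,\theta)\mapsto l e^{2\pi i\theta}$ as a map $\R_+\times(\R/\Z)\to\C$.

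The main obstacle is (ii), namely controlling the Fenchel-Nielsen data uniformly as a node opens. One must know not merely that $l_\gamma\to 0$ but that it does so continuously in $(u,\t)$, and that the twist $\tau_\gamma$, though individually only defined modulo $l_\gamma\Z$ (the monodromy ambiguity), recombines with $l_\gamma$ into the single-valued continuous quantity $l_\gamma e^{2\pi i\tau_\gamma/l_\gamma}$ — this is exactly what Proposition \ref{FNcontinuous}(2) packages, so the real work is seeing that the plumbing parameter $t_\gamma$ and the geometric quantity $l_\gamma e^{2\pi i\tau_\gamma/l_\gamma}$ are comparable near $0$. Concretely one checks that on the annulus $C_{t_\gamma}$ of modulus $m_\gamma=\frac{1}{2\pi}\log\frac{16}{|t_\gamma|}$ the core geodesic has hyperbolic length $l_\gamma=\pi/m_\gamma+o(1/m_\gamma)$ and that the twist is read off from $\arg t_\gamma$, so that $l_\gamma e^{2\pi i\tau_\gamma/l_\gamma}$ is asymptotic to a nonzero constant times $1/\log(1/|t_\gamma|)$ times a unit complex number agreeing in argument with $t_\gamma$; in particular it tends to $0$ continuously with $t_\gamma$, which is all that is needed. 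Combining this boundary estimate with Proposition \ref{FNcontinuous} on the open strata and the fact that $FN_\Gamma$ is a homeomorphism finishes the proof.
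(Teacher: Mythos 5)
Your proof is correct and takes essentially the same route as the paper, which dispatches this in one line (``This follows immediately from Theorem~\ref{rhocontinuous}''); you are simply unpacking that line via Proposition~\ref{FNcontinuous} (itself a consequence of Theorem~\ref{rhocontinuous}) and the homeomorphism $FN_\Gamma:\Q_\Gamma\to(\R_+\times\R)^{\widetilde\Gamma-\Gamma}\times\C^\Gamma$. Your points (i) and (ii) are correct checks, though (ii) is already packaged in the paper's observation, immediately after Proposition~\ref{FNcontinuous}, that $FN_\gamma=l_\gamma e^{2\pi i\tau_\gamma/l_\gamma}$ is well-defined and continuous on all of $T$, the extension by $0$ across $\{l_\gamma=0\}$ being automatic since $|FN_\gamma|=l_\gamma$.
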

\begin{proof}
This follows immediately from Theorem \ref{rhocontinuous}.
\end{proof}

\begin{proposition}\label{respect_strata}
The map $\Phi$ respects the strata: it maps $\P_\Gamma^{\Gamma'}$ to ${\mathcal{Q}}_\Gamma^{\Gamma'}$ for all $\Gamma'\subseteq \Gamma$, and as a map ${\P}_\Gamma^{\Gamma'} \to {\mathcal{Q}}_\Gamma^{\Gamma'}$ it is analytic. 
\end{proposition}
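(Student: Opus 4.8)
The plan is to prove the two assertions separately. For the first — that $\Phi$ respects the strata — I would argue purely topologically: a point $(u,\mathbf t)\in\P_\Gamma^{\Gamma'}$ is exactly one for which $t_\gamma=0$ precisely when $\gamma\in\Gamma'$, and by the plumbing construction the fiber $Y_\Gamma(u,\mathbf t)$ acquires a node along $\gamma$ (i.e.\ $C_{t_\gamma}=C_0$ is pinched) exactly for those $\gamma$, while the curves $\gamma\notin\Gamma'$ remain honest short geodesics of positive length, and no new nodes can appear since away from the $W_\gamma$ the surface $X'_\Gamma(u)$ is a fixed smooth piece. Hence the length function $l_\gamma$ on $Y_\Gamma(u,\mathbf t)$ vanishes for $\gamma\in\Gamma'$ and is positive for $\gamma\in\widetilde\Gamma-\Gamma'$; by the definition of the stratum $\S_{\Gamma'}$ and the Fenchel--Nielsen homeomorphism of the previous section, this says precisely that $\Phi(u,\mathbf t)\in\Q_\Gamma^{\Gamma'}$. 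So $\Phi$ maps $\P_\Gamma^{\Gamma'}$ into $\Q_\Gamma^{\Gamma'}$.

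For the second assertion — that the restricted map $\P_\Gamma^{\Gamma'}\to\Q_\Gamma^{\Gamma'}$ is analytic — I would exploit the fact that on this stratum the plumbing is genuinely a holomorphic family. Concretely, $\P_\Gamma^{\Gamma'}=U\times(\D^*)^{\Gamma-\Gamma'}\times\{0\}^{\Gamma'}$, and over it the plumbed curves $Y_\Gamma(u,\mathbf t)$ form an honest analytic family of stable curves (in the sense of Definition~\ref{FlatDef}): the gluing in Equation~\ref{plumbeqn} is biholomorphic in $(u,\mathbf t)$ and in the fiber variable, and the sections $\Sigma$ are analytic. By the universal property of Teichm\"uller space (applied componentwise to $(\widetilde S^{\Gamma'},\widetilde Z\cup\widetilde N)$, together with the $\Gamma$-marking fixed in Section~\ref{marking}), such an analytic family together with its marking induces a holomorphic classifying map into ${\cal T}_{(\widetilde S^{\Gamma'},\widetilde Z\cup\widetilde N)}$, hence into its quotient $\Q_\Gamma^{\Gamma'}=\S_{\Gamma'}/\Delta_{\Gamma-\Gamma'}$. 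It remains to check that this classifying map coincides with $\Phi|_{\P_\Gamma^{\Gamma'}}$; but both are characterized by sending $(u,\mathbf t)$ to the marked Riemann surface $Y_\Gamma(u,\mathbf t)$, and the Fenchel--Nielsen data used to define $\Phi$ is exactly the Fenchel--Nielsen data of that surface, so the two maps agree.

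**The main obstacle.** The delicate point is not the analyticity on a fixed stratum — that is essentially the universal property of Teichm\"uller space applied to a manifestly holomorphic family — but rather making precise that the plumbed family \emph{restricted to} $\P_\Gamma^{\Gamma'}$ really is an analytic family of stable curves \emph{with the correct marking}, so that the universal property applies and its output is $\Phi$ and not some other map. In particular one must be careful that collapsing the $t_\gamma=0$ directions does not destroy analyticity in the surviving $t_\gamma\neq 0$ directions, and that the $\Gamma$-marking inherited from Section~\ref{marking} on $X^\Gamma_\Gamma(u_0)$ transports consistently through the gluing; this is where one uses that the coordinates $\psi_\gamma$ were chosen as coordinates-with-parameters varying analytically in $u$. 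Once this identification is in place, everything else is formal. I expect the proof to be short, invoking Theorem~\ref{CoverThm} (existence of the $\Gamma$-marking of the plumbed family), the universal property of Teichm\"uller space, and the Fenchel--Nielsen description of $\Q_\Gamma$ from the previous section.
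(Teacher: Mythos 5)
Your proposal is correct and follows essentially the same route as the paper: the paper dismisses the strata-preservation claim as obvious (for exactly the reason you spell out — nodes of $Y_\Gamma(u,\t)$ appear precisely where $t_\gamma=0$), and proves analyticity by observing that the normalization of $Y_\Gamma$ restricted to a stratum is a proper smooth family, so the universal property of Teichm\"uller space gives an analytic classifying map, which is $\Phi$. Your write-up merely makes explicit the identification of the classifying map with $\Phi$ and the role of the $\Gamma$-marking, which the paper leaves implicit.
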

\begin{proof}
The fact that the strata are respected is obvious.  The analyticity of the restriction to the strata follows from the universal property of Teichm\"uller spaces: since the normalization of the family $Y_\Gamma$ is a proper smooth family of curves over each stratum, it is classified by an analytic map to the corresponding Teichm\"uller space.  
\end{proof}

The main point of this paper is to show that the map $\Phi$ is a local homeomorphism, giving us local charts on $\Q_\Gamma$.  Since domain and range are manifolds of the same dimension, by invariance of domain, it is enough to show that it is locally injective. We will get the local injectivity by a three-step argument involving properness, invertibility of an appropriate derivative, and a monodromy argument. 

\subsection{Part one: properness}
\begin{lemma} \label{properlemma} Every  $(u,{\mathbf 0})\in \P_\Gamma$ has a neighborhood $V$ such that $\Phi$ restricted to $V$ is a proper map to an open subset $V'$ of $\Q_\Gamma$.
\end{lemma}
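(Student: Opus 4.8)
The plan is to build the neighborhood $V$ of $(u,\mathbf{0})$ out of a product of Fenchel--Nielsen-type data and then use a sequential compactness argument to get properness. First I would fix a maximal multicurve $\widetilde\Gamma\supseteq\Gamma$ and recall that on $\Q_\Gamma$ the map $FN_\Gamma$ to $(\R_+\times\R)^{\widetilde\Gamma-\Gamma}\times\C^\Gamma$ is a homeomorphism; composing, $\Phi$ is encoded by the length and twist functions $l_\gamma, \tau_\gamma$ ($\gamma\in\widetilde\Gamma-\Gamma$) and $l_\gamma e^{2\pi i\tau_\gamma/l_\gamma}$ ($\gamma\in\Gamma$). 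I would choose $V$ to be a product of the form $U'\times\D_\delta^\Gamma$ for a small $U'\ni u_0$ and small polydisk radius $\delta$, and take $V'$ to be an honest open neighborhood of $\Phi(u,\mathbf{0})$ in $\Q_\Gamma$ cut out by the corresponding bounds on the FN coordinates; shrinking $V$ if necessary we may assume $\Phi(V)\subseteq V'$. Properness then amounts to: if $(u_i,\mathbf{t}_i)\in V$ with $\Phi(u_i,\mathbf{t}_i)\to q\in V'$, then $(u_i,\mathbf{t}_i)$ has a subsequence converging in $V$.

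The heart of the argument is the limit analysis. After passing to a subsequence I would assume $u_i\to u_\infty$ in $\overline{U'}$ and each $t_{\gamma,i}\to t_{\gamma,\infty}$ in $\overline{\D_\delta}$. What must be excluded is escape to the boundary: either $u_\infty\in\partial U'$, or some $|t_{\gamma,\infty}|=\delta$. The key input here is the explicit relationship between the plumbing parameter $t_\gamma$ and the hyperbolic geometry of $Y_\Gamma(u,\mathbf{t})$: when $t_\gamma$ is small, $C_{t_\gamma}$ is a long annulus of modulus $\frac{1}{2\pi}\log\frac{16}{|t_\gamma|}$, hence the corresponding geodesic length $l_\gamma$ is comparable to $\pi^2/\log(1/|t_\gamma|)$ (the standard estimate for the core geodesic of a collar, cf.\ Theorem \ref{rhocontinuous} and the model computation for $C$ given earlier), so $|t_\gamma|$ is controlled \emph{both above and below} by $l_\gamma$. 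Since $l_\gamma\to l_\gamma(q)$ which is bounded away from the forbidden value $\log(1/\delta)^{-1}\cdot\pi^2$ (because $q$ lies in the open set $V'$), no $|t_{\gamma,i}|$ can drift to $\delta$. For the $u$-coordinate: away from the plumbing regions $Y_\Gamma(u,\mathbf{t})$ agrees with $X'_\Gamma(u)$, and the FN coordinates of $\widetilde\Gamma-\Gamma$ together with the first-order plumbing data determine $u$ continuously (this is essentially the content of the universal property of Teichm\"uller space applied to the non-degenerating part, as in Proposition \ref{respect_strata}); so $u_\infty$ stays in $U'$. Therefore $(u_\infty,\mathbf{t}_\infty)\in V$, and continuity of $\Phi$ forces $\Phi(u_\infty,\mathbf{t}_\infty)=q$; this gives the convergent subsequence and proves properness onto the open set $V'$.

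I expect the main obstacle to be the bookkeeping needed to make "$|t_\gamma|$ is pinched between two constants times a function of $l_\gamma$" precise and \emph{uniform in $u$}: the plumbing charts $\psi_\gamma$ depend on $u$, so the modulus of the plumbed annulus $C_{t_\gamma}$ inside $Y_\Gamma(u,\mathbf{t})$ differs from $\frac{1}{2\pi}\log\frac{16}{|t_\gamma|}$ by a $u$-dependent bounded error coming from the collars glued on from $X'_\Gamma(u)$. One must check that, after shrinking $U'$, this error is bounded uniformly, so that the two-sided comparison between $|t_\gamma|$ and $l_\gamma$ holds with constants independent of $i$. This is where Theorem \ref{rhocontinuous} does the real work: it guarantees that the vertical hyperbolic metric, hence the relevant collar widths, vary continuously over the compact set $\overline{U'}$, so the errors are genuinely uniform. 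Once that uniformity is in hand, the rest is a routine diagonal-subsequence argument, and openness of $V'$ is arranged by definition.
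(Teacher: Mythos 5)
Your proposal takes a genuinely different route from the paper, and it has a real gap. The paper's proof is a short topological argument: take a sphere $S_\rho$ about $(u,\mathbf 0)$ in $\P_\Gamma$, note that $\Phi(u,\mathbf 0)\notin\Phi(S_\rho)$ (since $\Phi$ respects strata and restricts to the identity on $\P_\Gamma^\Gamma$), and then let $V'$ be the component of $\Q_\Gamma-\Phi(S_\rho)$ containing $\Phi(u,\mathbf 0)$ and $V$ the component of $\P_\Gamma-\Phi^{-1}\bigl(\Phi(S_\rho)\bigr)$ containing $(u,\mathbf 0)$. Because $\overline V$ is compact and $\Phi(\partial V)\subseteq\Phi(S_\rho)$ is disjoint from $V'$ \emph{by construction}, properness of $\Phi:V\to V'$ is automatic. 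The whole point is that $\Phi(\partial V)\cap V'=\emptyset$ is arranged for free, with no geometric estimates.

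Your argument instead tries to prove properness by excluding boundary escape via estimates in Fenchel--Nielsen coordinates, and it is precisely the exclusion $\Phi(\partial V)\cap V'=\emptyset$ that you never establish. The two-sided comparison between $|t_\gamma|$ and $l_\gamma$ can plausibly be made uniform over $\overline{U'}$ (this is the obstacle you flag), and I believe it handles the $t$-coordinates. But the control of the $u$-coordinate is the real problem. The sentence ``the FN coordinates of $\widetilde\Gamma-\Gamma$ together with the first-order plumbing data determine $u$ continuously, so $u_\infty$ stays in $U'$'' does not hold up: that FN data determine $u$ is essentially the local injectivity of $\Phi$ in the $u$-direction, which is not available at this stage (it is exactly what Parts Two and Three of the argument are designed to prove). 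And even granting injectivity, it would not follow that the limit $u_\infty$ lies in the open set $U'$ rather than on $\partial U'$: the point $q$ could very well have a $\Phi$-preimage on $\partial V$, defeating the sequential argument. To repair this you would have to build $V'$ so that it misses $\Phi(\partial V)$ --- but at that point you have reproduced the paper's component construction, and the FN estimates become unnecessary. In short, the geometric approach is not wrong in spirit, but without the component trick it needs the local injectivity you are not yet entitled to, whereas the paper's proof needs only that $\Phi$ is continuous, respects strata, and is the identity on the top stratum.
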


\begin{proof} Let $S_\rho$ be the sphere of radius $\rho$ around $(u,{\mathbf 0})$ in $\P_\Gamma$. Then $\Phi(u,{\mathbf 0}) \notin \Phi(S_\rho)$ because $\Phi$ respects the strata and is the identity on $\P^\Gamma_\Gamma$, so that
\[
\Phi(S_\rho)\cap \Q_\Gamma^\Gamma\subseteq \Q_\Gamma^\Gamma
\]
but $\Phi$ is the identity on $\Q_\Gamma^\Gamma$. It follows that $(u,{\mathbf 0})\notin \Phi^{-1}(\Phi(S_\rho))$. 

Let $V' $ be the component of $\Q_\Gamma- \Phi(S_\rho)$ containing $\Phi(u,{\mathbf 0})$, and $V$ be the component of $\P_\Gamma - \Phi^{-1}(\Phi(S_\rho))$ containing $(u,{\mathbf 0})$. Since the image of a connected set is connected, $\Phi$ maps $V$ to $V'$, and since $\overline V$ is compact, $V \to V'$ is proper.
\end{proof}

Any proper map from an oriented manifold to an oriented manifold has a degree; if it is a local homeomorphism it is a covering map.  If we can show that the map $\Phi:V \to V'$ is a local homeomorphism of degree $1$, we will be done. The hard part is showing that it is a local homeomorphism. The standard method for proving such a statement involves the Implicit Function Theorem. Since we don't yet know that $\Q_\Gamma$ is a smooth manifold, we will have to work stratum by stratum. 

\subsection{Part two: local injectivity on strata}
The restriction $\Phi: \P_\Gamma^{\Gamma'}\to\Q_\Gamma^{\Gamma'}$ is {\em{a map of analytic manifolds and can be differentiated}}.  We will show that, sufficiently close to $(u,{\mathbf 0})$, the derivative of this map is an isomorphism, or rather (equivalently), we will show that the coderivative of   $\Phi: \P_\Gamma^{\Gamma'}\to\Q_\Gamma^{\Gamma'}$ is an isomorphism.  

This coderivative consists of evaluating elements of the cotangent space of $\Q_\Gamma^{\Gamma'}$ (that we know to be appropriate quadratic differentials) on tangent vectors to $\P_\Gamma^{\Gamma'}$ (which we know also, since it is the tangent space to $U \times \D^\Gamma$). Let us spell this out.

  Since $\Delta_{\Gamma-\Gamma'}$ acts freely on $\S_{\Gamma'}$, the cotangent space to $\Q_\Gamma ^{\Gamma'}$ is the same as the cotangent space to the ``little'' Teichm\"uller space corresponding to the stratum $\S_{\Gamma'}$.
   
This means
 \[
 T^\top_{\Phi(u,\t)} \Q_{\Gamma}^{\Gamma'}= Q^1(Y_\Gamma^*(u,\t)),
 \]
 the space of integrable holomorphic quadratic differentials on $Y_\Gamma^*(u,\t)$, the space $Y_\Gamma(u,\t)$ with the marked points and the nodes removed.  These quadratic differentials are meromorphic on the normalized curve $\widetilde Y_\Gamma(u,\t)$, holomorphic except for at most simple poles at the marked points and the pairs of points corresponding to the nodes.

\subsection{The basis of $T_{(u,\t)}\P_\Gamma^{\Gamma'}$}\label{belts}Since $T_{(u,\O)} \P_\Gamma^\Gamma=T_{\Phi(u,\O)} \Q_\Gamma^\Gamma$, we can choose a basis of  $T_{(u,\O)} \P_\Gamma^\Gamma$ made up of Beltrami forms $\mu_j, 1\le j \le \dim \Q_\Gamma^\Gamma$ on $Y^*_\Gamma (u,\O)$.  By a theorem of Ha\"issinsky in \cite{peter}, we may assume that the $\mu_j$ are carried by the part of $Y_\Gamma(u,\O)$, which is outside the part of each plumbing fixture where $|x_\gamma|, |y_\gamma|\le 2$.

Since this part of $Y_\Gamma(u,\O)$ is also part of all $Y_\Gamma(u,\t)$, these Beltrami forms can be viewed as vectors in $T_{(u,\t)}\P_\Gamma^{\Gamma'}$.

The remaining tangent vectors of our basis are the $\partial/\partial t_\gamma,\ \gamma\in \Gamma-\Gamma'$.  We summarize this in Proposition \ref {tanbasis}.

\begin{proposition} \label{tanbasis} The following set is a basis of $T_{(u,\t)} \P_\Gamma^{\Gamma'}$
\[
\left(\bigcup_{\gamma\in\Gamma-\Gamma'} \partial/\partial t_\gamma\right) \cup \left(\bigcup_{j=1}^{\dim \Q_\Gamma^\Gamma} \mu_j\right).
\]
\end{proposition}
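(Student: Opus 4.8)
The plan is to show that the listed set has the right cardinality and is linearly independent, from which it follows that it is a basis since $\dim T_{(u,\t)}\P_\Gamma^{\Gamma'} = \dim \P_\Gamma^{\Gamma'}$. First I would compute dimensions. The manifold $\P_\Gamma^{\Gamma'} = \{(u,\t)\in U\times \D^\Gamma : t_\gamma = 0 \iff \gamma \in \Gamma'\}$ is locally, near a point with exactly the $\Gamma'$-coordinates vanishing, an open subset of $U \times \D^{\Gamma-\Gamma'}$, so its dimension is $\dim U + |\Gamma - \Gamma'| = \dim \Q_\Gamma^\Gamma + |\Gamma - \Gamma'|$. (Here $\dim U = \dim \Q_\Gamma^\Gamma = 3g-3+n$.) On the other hand the proposed set has exactly $|\Gamma-\Gamma'|$ vectors $\partial/\partial t_\gamma$ together with $\dim \Q_\Gamma^\Gamma$ Beltrami forms $\mu_j$, so the cardinalities match. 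Thus it suffices to prove linear independence.

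For linear independence I would argue by separating the two families of vectors using the coordinate splitting of $T_{(u,\t)}\P_\Gamma^{\Gamma'}$. Near $(u,\t)$ the tangent space splits canonically as $T_u U \oplus \bigoplus_{\gamma \in \Gamma-\Gamma'} \C\,\partial/\partial t_\gamma$, because $\P_\Gamma^{\Gamma'}$ is (locally) the product $U \times \D^{\Gamma-\Gamma'}$. The vectors $\partial/\partial t_\gamma$, $\gamma\in\Gamma-\Gamma'$, are by construction a basis of the second summand. So it remains to check that the images of the $\mu_j$ in $T_{(u,\t)}\P_\Gamma^{\Gamma'}$ project to a basis of the first summand $T_u U$ — equivalently, that they are linearly independent modulo $\bigoplus_{\gamma}\C\,\partial/\partial t_\gamma$. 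Now recall how the $\mu_j$ were produced: at the special point $(u,\O)\in\P_\Gamma^\Gamma$ we have the identification $T_{(u,\O)}\P_\Gamma^\Gamma = T_{\Phi(u,\O)}\Q_\Gamma^\Gamma$, and the $\mu_j$ were chosen to be a basis of this space, represented (by Ha\"issinsky's theorem, \cite{peter}) by Beltrami forms supported in the region of $Y_\Gamma(u,\O)$ away from the parts of the plumbing fixtures where $|x_\gamma|,|y_\gamma|\le 2$. Since that region sits inside every fiber $Y_\Gamma(u,\t)$ unchanged, transporting the $\mu_j$ to $T_{(u,\t)}\P_\Gamma^{\Gamma'}$ is exactly the derivative of the inclusion $U \hookrightarrow U\times\D^{\Gamma-\Gamma'}$, $u'\mapsto (u',\t)$, at the level of deformations of complex structure: the $\mu_j$ deform the $U$-coordinate and do not touch the $t_\gamma$-coordinates. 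Hence their images span a complement to $\bigoplus_\gamma \C\,\partial/\partial t_\gamma$ and are linearly independent.

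The one point that needs genuine care — and which I expect to be the main obstacle — is justifying that the $\mu_j$, a priori defined as tangent vectors to $\P_\Gamma^\Gamma$ at $(u,\O)$, really do define tangent vectors to $\P_\Gamma^{\Gamma'}$ at the nearby point $(u,\t)$, and that under this identification they span a subspace transverse to the $\partial/\partial t_\gamma$ directions. The mechanism is that a Beltrami form supported in the fixed region $\{|x_\gamma|,|y_\gamma|>2\}$ of $Y_\Gamma$ makes sense as an infinitesimal deformation of $Y_\Gamma(u,\t)$ for every $\t$ simultaneously (the region is literally common to all the fibers), so it determines a vector field along the $U$-direction; because such a form has no component in the plumbing parameters, its class pairs trivially with the coordinate functions $t_\gamma$ and hence lies outside $\bigoplus_\gamma\C\,\partial/\partial t_\gamma$. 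Combined with the fact that the $\mu_j$ were a basis at $\O$ and the region of support varies continuously (indeed is constant), shrinking the neighborhood if necessary we retain linear independence at $(u,\t)$. This completes the verification that the displayed set is a basis.
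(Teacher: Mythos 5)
Your proof is correct and takes essentially the same approach as the paper: the paper's entire proof is the one-liner that the claim is obvious because $\P_\Gamma^{\Gamma'}=\Q_\Gamma^\Gamma\times \D^{\Gamma-\Gamma'}$, which is exactly the product splitting of the tangent space you identify. You have simply filled in the details (dimension count plus linear independence via the splitting $T_u U \oplus \bigoplus_\gamma \C\,\partial/\partial t_\gamma$ and the observation that the $\mu_j$, being supported in the region common to all fibers, deform only the $U$-direction) that the paper leaves implicit.
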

\begin{proof} This is obvious, since $\P_\Gamma^{\Gamma'}=\Q_\Gamma^\Gamma\times \D^{\Gamma-\Gamma'}$.
\end{proof}

(This treatment can also be found in Section 7 of \cite{masur}, Sections 5.4, 5.4T, 5.4S of \cite{scott90}, and Chapter 3 of \cite{scott03}). 

 \subsection{The quadratic differentials $q_\gamma$}
  
 For each $\gamma \in \Gamma-\Gamma'$, this  cotangent space contains quadratic differentials $q_\gamma$ defined as follows.
 
The space  
\[
A_h:= \{z\in\C\::\:|\mathrm{Im}(z)| <h\}/\Z
\]
 is an annulus of modulus $2h$; it carries the quadratic differential $dz^2$, which is invariant under reflection and translation, i.e., under maps $z \mapsto \pm z + 1$. 

For each $\gamma \in \Gamma-\Gamma'$, set 
\[
h_\gamma(u,\t)= \frac \pi{2\, l_\gamma(u,\t)}.
\]
There exists a covering map 
\[
\pi_\gamma(u,\t):A_{h_\gamma(u,\t)}  \to Y_\Gamma^*(u,\t)
\]
 such that the image of a generator of the fundamental group of the annulus is a curve homotopic to $\gamma$. This covering map is unique up to translation and  sign.  Thus the quadratic differential
\[
q_\gamma (u,\t) :=\left(\pi_\gamma(u,\t)\right)_* dz^2
\]
is a well-defined element of $Q^1(Y_\Gamma^*(u,\t))$. As pointed out to us by S. Wolpert, the quadratic differential $q_\gamma (u,\t)$ was first studied by H. Petersson in \cite{hans} and \cite{hans2}, and there is an extensive amount of literature about it: \cite{fay}, \cite{gardiner}, \cite{hejhal}, \cite{riera}, \cite{ctm}, \cite{scott82}, \cite{scott83}, \cite{scott92}, \cite{scott08}, \cite{scott09}, \cite{scott10a}, and \cite{scott10b}.

We require the following continuity statement. See Lemma 4.4 in \cite{scott09} for a related result. 
\begin{proposition}\label{continuous}
The map $(u,\t)\mapsto q_\gamma(u,\t)$ extends continuously to a section of the bundle $Q^2_{Y_\Gamma/\P_\Gamma}$ constructed in Theorem \ref{vectorbundlethm}. 
\end{proposition}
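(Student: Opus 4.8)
The plan is to analyze the behavior of $q_\gamma(u,\t)$ near a stratum $\P_\Gamma^{\Gamma'}$ with $\gamma \in \Gamma - \Gamma'$, i.e., as $t_\gamma \to 0$, and to show the limit is precisely the section of $Q^2_{Y_\Gamma/\P_\Gamma}$ that in the plumbing fixture for $\gamma$ is a nonzero multiple of $\omega = \frac14(dx/x - dy/y)^2$. First I would recall the concrete description of $q_\gamma(u,\t)$ for $t_\gamma \neq 0$: the covering map $\pi_\gamma(u,\t):A_{h_\gamma(u,\t)} \to Y_\Gamma^*(u,\t)$ pushes forward $dz^2$, and since the image of a generator of $\pi_1$ of the annulus is homotopic to $\gamma$, the collar of the geodesic in class $\gamma$ is exactly the image of the full annulus $A_{h_\gamma}$ (with $h_\gamma = \pi/(2l_\gamma) \to \infty$). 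Inside the plumbing fixture $C_{t_\gamma} = \{xy = t_\gamma\}$, the projection to the $x$-coordinate identifies $C_{t_\gamma}$ with the annulus $\{|t_\gamma|/4 < |x| < 4\}$, whose universal cover is a horizontal strip; in the uniformizing coordinate $z = \frac{1}{2\pi i}\log x$ (suitably normalized) one computes directly that $(\pi_\gamma)_* dz^2$ restricted to this annulus is, up to a scalar depending on $l_\gamma$ and $t_\gamma$, the differential $\left(\frac{1}{2\pi i}\frac{dx}{x}\right)^2$. The key computational point — already implicit in the metric formula displayed for $\rho_t$ on $C_t$ earlier in the paper — is that the uniformizing coordinate on the collar is, to leading order, a constant multiple of $\log x$, with the constant controlled by $\log|t_\gamma|$.

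Next I would combine this with the Lemma (just before Theorem \ref{vectorbundlethm}) stating that $\omega$ restricts on vertical tangent vectors to $dx^2/x^2$ in the $(t,x)$ coordinates and to $dy^2/y^2$ in the $(t,y)$ coordinates. This is exactly the form needed: the section of $Q^2_{Y_\Gamma/\P_\Gamma}$ that extends $q_\gamma$ should be $c \cdot \widetilde\psi_\gamma^* \omega$ inside $W_\gamma$ (for an appropriate scalar $c$ absorbing the $(2\pi)^2$ and the scaling of the strip), and the prescribed form $dx^2/x^2$, $dy^2/y^2$ on the two branches of the node is precisely what the pushforward of $dz^2$ limits to as the collar modulus goes to infinity. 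Away from $W_\gamma$, i.e., on the part of $Y_\Gamma(u,\t)$ outside all plumbing fixtures, the covering map $\pi_\gamma(u,\t)$ maps the ``ends'' of the strip $A_{h_\gamma}$ to regions that recede toward the node, and $q_\gamma$ there tends to $0$; this follows from Theorem \ref{rhocontinuous} (continuity of the vertical hyperbolic metric, hence of the collar geometry) together with the fact that $h_\gamma(u,\t) \to \infty$ as $l_\gamma \to 0$, which pushes the image of any fixed compact part of $Y_\Gamma$ outside the support of $\pi_\gamma{}_* dz^2$ in the limit — matching the fact that $\omega$ vanishes identically off the plumbing fixture.

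The remaining task is to upgrade these fiberwise/pointwise limits to genuine continuity of the resulting section of the \emph{vector bundle} $Q^2_{Y_\Gamma/\P_\Gamma}$, which by Theorem \ref{vectorbundlethm} has well-defined fibers varying continuously. For this I would argue that the family of quadratic differentials $q_\gamma(u,\t)$ is locally uniformly bounded (using a bound on the $L^1$-norm, which for $q_\gamma$ is essentially the extremal length / collar modulus and stays controlled near the stratum), so that normal families / Montel-type compactness for holomorphic quadratic differentials on the normalized fibers gives convergence along subsequences; one then identifies every subsequential limit with $c\cdot\widetilde\psi^*\omega$ using the explicit local computations above, forcing convergence of the whole family. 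Combined with Proposition \ref{dimensionvsquaddiff} and the local-freeness criterion invoked in Section \ref{Q2vectorbundlesection}, continuity in the bundle sense follows. I expect the main obstacle to be exactly this last step: making the passage from pointwise limits on individual fibers to continuity as a section of $Q^2_{Y_\Gamma/\P_\Gamma}$ rigorous near the degenerate locus, because the fibers of the bundle over the boundary strata parametrize quadratic differentials on the \emph{nodal} curves (with double poles and equal residues), and one must check that the limiting object genuinely lands in that space with the correct residue — which is where the careful leading-order analysis of the uniformizing coordinate on the opening collar, and the equal-residue condition built into $\omega$, are indispensable.
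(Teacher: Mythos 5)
Your proposal contains genuine geometric errors that would make the argument fail. The most consequential is the claim that away from the plumbing fixture, $q_\gamma$ tends to $0$ as $t_\gamma \to 0$. This is false: the limit of $q_\gamma(u,\t)$ is the Petersson quadratic differential on the nodal curve, which is a nonzero meromorphic quadratic differential on the whole curve, holomorphic away from the node (with a double pole at the node). If the limit were zero on the open complement of $W_\gamma$ and equal to $c\cdot\widetilde\psi^*\omega$ inside $W_\gamma$, the identity theorem would force it to be zero near the node too, contradicting the double-pole behavior you claim and indeed the entire point of the proposition. The source of the error is a geometric misconception: the core of the strip $A_{h_\gamma}$ maps near the short geodesic (the pinching curve), so it is the \emph{middle} of the annulus, not the ends, that is absorbed into the node; the ends map to a fixed region of $Y_\Gamma^*$ away from the node, and that is where the continuity argument must actually be made.

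Two further steps in your plan do not go through as stated. First, the $L^1$-norm of $q_\gamma(u,\t)$ on the fiber is \emph{not} locally uniformly bounded as $t_\gamma \to 0$: in the collar, $q_\gamma$ is close to $dz^2$ on an annulus of height $h_\gamma \sim \pi/(2l_\gamma) \to \infty$, so the $L^1$-mass blows up. This is precisely why the cotangent bundle of $\Q_\Gamma$ is not $Q^2_{Y_\Gamma/\P_\Gamma}$ but a twist of it by the line bundle of the boundary divisor (Section~\ref{cotansect}); the $q_\gamma$ must be rescaled by $t_\gamma$ to pair boundedly. Thus a Montel-type compactness argument built on an $L^1$ bound does not get off the ground. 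Second, your proposal omits the normalization that is the actual mechanism of continuity in the paper's proof: since the covering maps $\pi_\gamma(u,\t)$ are only defined up to translation, one must pin them down by a continuous choice of section $s(u,\t)$ on the boundary of the standard collar, so that a fixed compact region of the base curve remains visible in a bounded portion of the strip as $h_\gamma \to \infty$. Without this normalization, the covering maps can drift off to infinity and the pointwise limit is ill-defined — which seems to be the confusion behind your claim that the image of a compact part ``leaves the support'' of $q_\gamma$. The paper then uses Theorem~\ref{rhocontinuous} to get continuity of the normalized covering maps, and Lemma~\ref{xavier} (compactness of univalent maps) to extract the residue at the node by comparing the inclusion $C_{t_\gamma} \hookrightarrow A_{h_\gamma}$ with the identity. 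Your appeal to the local $\omega = \frac14(dx/x-dy/y)^2$ computation and to the need for a careful leading-order analysis of the uniformizing coordinate points in the right direction for the residue step, but the global continuity argument is missing its essential ingredient.
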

\begin{proof} Choose a neighborhood $V$ of $(u_0,\t_0)$ in $\P_\Gamma$, and choose a continuous section $s:V \to Y_\Gamma$ such that for all $(u,\t)\in V$, the point $s(u,\t)$ belongs to the boundary curve of the standard annulus around the geodesic in the homotopy class of $\gamma$. Such a section exists because  the standard collar has a limit as $t\to t_\infty$: the standard collar around a node (bounded by two horocycles of length 2).

For each $(u,\t)\in V$ there are unique $-\infty \le b(u,\t)<0<a(u,\t)<\infty$ and unique covering maps 
\begin{equation}\label{normalizedcoveringmap}
\widetilde\pi_{\gamma,(u,\t)}:\{z\in \C\ |\ b(u,\t)<\mathrm{Im}(z) <a(u,\t)\}/\Z \to Y_\Gamma^*(u,\t)
\end{equation}
with  $\widetilde\pi_{\gamma,(u,\t)}(0)=s(u,\t)$. (The new covering map $\widetilde\pi_{\gamma,(u,\t)}$ is just the old covering map $\pi_{\gamma}(u,\t)$ precomposed with a translation. This was done to keep the points we are considering in $Y_\Gamma^*(u,\t)$ from marching off to the nodes. By normalizing in this new way, we keep these points in a bounded region of $Y_\Gamma^*(u,\t)$). 

The new covering maps $\widetilde\pi_{\gamma,(u,\t)}$ map the circle corresponding to $\R$ to the homotopy class of $\gamma$. In fact, the circle then maps isometrically to one boundary curve of the standard collar around $\gamma$.

By Theorem \ref{rhocontinuous}, everything varies continuously with respect to $(u,\t)$: the functions $a(u,\t), b(u,\t)$ (but $b(u,\t)$ will tend to $-\infty$ if $t_\gamma \to 0$; we can check that $a(t)$ converges to $1/2$ as $t_\gamma \to 0$), the hyperbolic metric of the region defined in Equation  \ref{normalizedcoveringmap}, and the map $\pi_{\gamma,(u,\t)}$.  Thus 
\[
q_\gamma(u,\t) = (\widetilde\pi_{\gamma,(u,\t)})_*dz^2
\]
also varies continuously.

Now we need to check that in the limit as $t_\gamma \to 0$, the quadratic differential $q_\gamma(u,\t)$ acquires double poles at the node corresponding to $\gamma$ with equal residues on the two branches. To this end, we require the following lemma from complex analysis.
\begin{lemma}\label{xavier}
For all $\epsilon>0$ there exists $M$ such that  all analytic injective homotopy-equivalences $f:A_{h}\to \C/\Z$ satisfy 
\[
\bigl|f'-1\bigr|<\epsilon
\]
on $A_{h-M}$.
\end{lemma}

\begin{proof}
This follows from the compactness of univalent mappings.  Choose $\epsilon>0$, and use compactness to find $r>0$ such that for all univalent functions $g:\D \to \C$ such that $g(0)=0,\ g'(0)=1$ we have
\[ 
\left| \frac w{g(w)}-1\right|\le \epsilon.
\]
We can take $M=1/r$. Indeed, lift $f:A_h \to \C/\Z$ to $\tilde f$ mapping the band of height $h$ to $\C$ and satisfies $\tilde f(z+1)=\tilde f(z)+1$. Of course $f'=\tilde f'$. Define
\[
g(w):= \frac{r\left(\tilde f(z+w/r)-\tilde f(z)\right)}{\tilde f'(z)}. 
\]
This map $g$ does satisfy $g(0)=0, g'(0)=1$, and it is univalent on the unit disk if $z$ is distance at least $1/r$ from the boundary of the band.  Note that $g(r)= r/{\tilde f'(z)}$. Thus
\[
|f'(z)-1| =|\tilde f'(z)-1| = \left|\frac r{g(r)}-1\right|<\epsilon. 
\]
\end{proof}

In our case, the inclusions $f_i$ will be the inclusions 
\[
C_{t_\gamma} \hookrightarrow A_{h_\gamma(u,\t)}\subseteq A_\infty.
\]
Let $x$ and $y$ be coordinates on $C_{t_\gamma}$ so $xy=t_\gamma$. It follows that the pushforward of 
\[
\frac14\left(\frac {dx}x-\frac {dy}y\right)^2
\]
 converges, uniformly on compact subsets, to $dz^2,$ since it is $dz^2$ in the coordinate $z$ described in Section \ref{Q2vectorbundlesection}.
 
 Since $dz^2-(\pi_{\gamma}(u,\t))^*(\pi_{\gamma}(u,\t))_*dz^2$ differs from $dz^2$ in the $L^1$ norm by a uniformly bounded quantity (in fact, by at most 1), it follows that the limit of $q_\gamma$ as $t_\gamma\to 0$ is a quadratic differential with double poles at the nodes and equal residues since it differs on a neighborhood of the node from the pushforward of 
 \[
 \frac14\left(\frac {dx}x-\frac {dy}y\right)^2
 \]
by an integrable quadratic differential. (See Lemma 2.2 of \cite{scott92}, Lemma 4.3 of \cite{scott09}, Proposition 6 of \cite{scott10b}, and \cite{expothurston}). 
\end{proof}
The following result is Proposition 7.1 of \cite{masur}, it is also in Chapter 3 of \cite{scott03}; see also Lemma 2.6 of \cite{scott92}. 
\begin{proposition}\label{estimate}
For a fixed $u$, 
\[
\left\|2\pi t_\gamma \cdot \Phi^* q_\gamma(u,\t)-dt_\gamma\right\|_{Y_\Gamma^*(u,\t)}\in o(1).
\]
\end{proposition}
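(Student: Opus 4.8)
The plan is to work in the plumbing coordinate $t_\gamma$ near a point $(u,\t)$ with $t_\gamma$ small but nonzero, and to track how the quadratic differential $q_\gamma(u,\t) = (\pi_\gamma(u,\t))_* dz^2$ pulls back under $\Phi$ to a cotangent vector at $\Phi(u,\t)$, evaluated against the tangent vector $\partial/\partial t_\gamma$. The key is the explicit annular model: the plumbing region $C_{t_\gamma}$ embeds as an annulus, and in the flat coordinate $z$ on $A_{h_\gamma(u,\t)}$ coming from the covering $\pi_\gamma(u,\t)$, the differential $q_\gamma$ is simply $dz^2$. On the other hand, as established in the lemma of Section~\ref{Q2vectorbundlesection}, in the plumbing coordinates $(t,x)$ the canonical quadratic differential $\omega = \tfrac14(dx/x - dy/y)^2$ restricts on the fibers to $dx^2/x^2$, and by the computation in Proposition~\ref{continuous} it converges uniformly on compact subsets (away from the node) to $dz^2$ after the change of variable $z = \frac{1}{2\pi i}\log x + \text{const}$, i.e. $x = e^{2\pi i z}\cdot(\text{unit})$. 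So up to $o(1)$ errors controlled by Lemma~\ref{xavier}, $q_\gamma$ behaves like $\tfrac14(dx/x - dy/y)^2$ on the plumbing fixture.

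Next I would compute the coderivative directly. Differentiating the relation $xy = t_\gamma$ in the family $Y_\Gamma$, the tangent vector $\partial/\partial t_\gamma$ corresponds, along the locus being deformed, to a Beltrami-type class supported in the plumbing collar; the standard Rauch-type / Ahlfors-Bers computation pairs this class against $\tfrac14(dx/x-dy/y)^2$. The essential identity is that pairing $dt_\gamma$ against $\partial/\partial t_\gamma$ gives $1$, while pairing $\Phi^* q_\gamma$ against $\partial/\partial t_\gamma$ picks out precisely the residue-type coefficient of $q_\gamma$ at the node, which in the limit is $\tfrac{1}{(2\pi i)^2}$ times a normalization — accounting for the factor $2\pi t_\gamma$ in the statement. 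Concretely, integrating the difference $2\pi t_\gamma\, \Phi^* q_\gamma(u,\t) - dt_\gamma$ over the fiber and using the flat-coordinate normalization of $q_\gamma$, the leading terms cancel exactly, and what remains is bounded in $L^1$ on $Y_\Gamma^*(u,\t)$ by the deviation $|f' - 1|$ of the inclusion $C_{t_\gamma}\hookrightarrow A_{h_\gamma(u,\t)}$, which by Lemma~\ref{xavier} is $o(1)$ as $t_\gamma \to 0$, together with the $o(1)$ contribution from the complement of the collar where the universal-cover map $\pi_\gamma$ has area tending to zero relative to the hyperbolic area. The continuity of $q_\gamma$ as a section of $Q^2_{Y_\Gamma/\P_\Gamma}$ (Proposition~\ref{continuous}) is what makes ``$o(1)$'' meaningful uniformly in $\t$ near $\t_0$.

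The main obstacle I anticipate is making the $L^1$-estimate genuinely uniform: the annulus $A_{h_\gamma}$ degenerates as $t_\gamma \to 0$ (its modulus blows up), so one must be careful that the error terms — both from the Koebe-type distortion in the collar and from the behavior of $q_\gamma$ outside the collar — are controlled independently of how the rest of the surface degenerates (i.e. as other coordinates $t_{\gamma'}$, $\gamma' \neq \gamma$, also vary). The resolution is exactly the fat-collar principle already invoked repeatedly in the proof of Theorem~\ref{rhocontinuous}: the $\gamma$-collar is quasi-isometrically a long flat cylinder, the differential $q_\gamma$ has almost all of its $L^1$-mass concentrated there, and the contribution from everything else is uniformly $o(1)$ because the hyperbolic geometry away from the opening node varies continuously by Theorem~\ref{rhocontinuous}. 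So the estimate reduces to the one-variable computation on the model cylinder, where Lemma~\ref{xavier} does the work.
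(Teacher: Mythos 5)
Your architecture is the right one and matches the paper's: represent $\partial/\partial t_\gamma$ by a Beltrami class supported in the plumbing collar, pair it against $q_\gamma=(\pi_\gamma)_*dz^2$ in the flat coordinate on the cylinder, pass to the annular cover of $Y_\Gamma^*(u,\t)$, isolate a main term, and control the error with Lemma~\ref{xavier} plus a bound on the contribution from the ends of the collar.

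But there is a genuine gap in the middle, where you replace a computation with the slogan that ``pairing $\Phi^*q_\gamma$ against $\partial/\partial t_\gamma$ picks out precisely the residue-type coefficient of $q_\gamma$ at the node, which in the limit is $\frac{1}{(2\pi i)^2}$ times a normalization --- accounting for the factor $2\pi t_\gamma$.'' This conflates two distinct constructions and would give the wrong answer if executed literally. The residue of $q_\gamma$ (i.e.\ the constant Fourier coefficient of $\pi_\gamma^*q_\gamma$, as in Proposition~\ref{resmap}) is a quantity tending to a fixed constant as $t_\gamma\to 0$; it does \emph{not} produce a factor of $t_\gamma^{-1}$. The factor $\frac{1}{2\pi t_\gamma}$ in the pairing comes from a different place: one must actually write down the infinitesimal Beltrami form representing $\partial/\partial t_\gamma$. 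The paper does this by interpolating a quasiconformal homeomorphism $C_t\to C_s$ compatible with the plumbing gluing, differentiating in $s$ at $s=t$ to obtain
\[
\mu_t \;=\; \frac{1}{4\pi h_t t}\,\frac{d\overline w}{dw}
\]
supported on the region $|x|,|y|<2$ of $C_t$, with $h_t=\tfrac{1}{4\pi}\log\tfrac{4}{|t|}$. Pairing $\mu_t$ against $dz^2$ over the model cylinder (of height roughly $2h_t$) is what yields the leading $\frac{1}{2\pi t}$; the $t$-dependence is in $\mu_t$, not in any residue of $q_\gamma$. Without this explicit formula you cannot see the cancellation you assert. Similarly, the off-collar contributions are not $o(1)$ on their own --- they are $O\!\left(\frac{1}{h_t t}\right)$, which only becomes $o(1)$ after multiplying by $2\pi t_\gamma$ --- so ``continuity of the hyperbolic geometry'' is not quite the right reason. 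Once the Beltrami representative is written down, your plan (split the integral over the primary lift of $C_t$ versus the end annuli, apply Lemma~\ref{xavier} to the term $\int(\frac{d\overline w}{dw}-\frac{d\overline z}{dz})dz^2$, and bound the boundary pieces) goes through exactly as in the paper.
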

\begin{proof} To compare $\Phi^*q_\gamma$ and $dt_\gamma$, we need to represent $\partial/\partial t_\gamma$ by an infinitesimal Beltrami form.  For $0<|t|<1$, the map
\[
w \mapsto (\sqrt t e^{2\pi i w},\ \sqrt t e^{-2\pi i w})
\]
induces an isomorphism of 
\[
A_{\frac 1{4\pi} \log \frac {16}{|t|}} := \left\{w\in \C : |\mathrm{Im}(w)|< \frac 1{4\pi} \log \frac {16}{|t|}\right\}/\Z
\]
onto the ``arc of hyperbola'' $|x|<4$ and $|y|<4$ in the model $C_t$.

Set $h_t:=\frac 1{4\pi}\log \frac 4{|t|}$, and set $w:=u+iv$; the region $|v|<h_t$ corresponds in the model $C_t$ to the region $|x|, |y|<2$.  The map 
\[
\phi:w=(u+iv) \mapsto 
\begin{cases} w+\frac 1{4\pi i} \log \frac st &\quad \text{if $v\ge h_t $}\\ 
w+\frac 1{4\pi i} \frac v{h_t} \log\frac st &\quad \text{if $|v|<h_t$}\\ 
w-\frac 1{4\pi i} \log \frac st &\quad \text{if $v\ge h_t $}.
\end{cases}
\]
induces a quasiconformal homeomorphism $C_t \to C_s$ compatible with the gluing involved in the plumbing construction, i.e., the $x$-coordinates should be equal when the $y$ coordinate is small, and the $y$-coordinates should be equal when the $x$-coordinate is small.  In fact, compatibility with the gluing implies the first and last cases above, and the central one is a possible interpolation (or rather several, for different branches of the logarithm).   We find that its Beltrami form is
\[
\frac{\overline \partial \phi}{\partial \phi} = \frac {\log \frac st}{4\pi h_t-\log \frac st}\frac {d\overline w}{dw},
\]
and so the infinitesimal Beltrami form representing $\partial/\partial t$ is its derivative with respect to $s$, evaluated when $s=t$, that comes out to be 
\[
\mu_t:=\frac {\partial}{\partial t} = \frac 1{4\pi h_t t} \frac {d\overline w}{dw} 
\]
This pairs with $dz^2$ on the annulus $A_{h_t}$ to give $1/(2\pi t)$.

Unfortunately this isn't quite what we want: we want to pair $\mu_t$ thought of as a Beltrami form on $Y_\Gamma(u,\t)$, carried by the region $|x|, |y|<2$ of $C_t$ in the plumbing fixture corresponding to $\gamma$, with $q_\gamma$. 

We lift to $A_{\pi/l_\gamma(u,\t)}$ viewed as the covering space of $Y^*_\Gamma(u,\t)$ where $\gamma$ is the only closed curve. One lift of  $C_t$ to this annular cover is an annulus $\widetilde C_t$ embedded in  $A_{\pi/l_\gamma(u,\t)}$ by a homotopy equivalence, and the others are all are naturally embedded in the  annuli of modulus 1 at the ends of $A_{\pi/l_\gamma(u,\t)}$.  

Call $z$ the coordinate of $A_{\pi/l_\gamma(u,\t)}$.  By the definition of $q_\gamma$ we have a choice of pairing $\pi_\gamma^*q_\gamma$ with $\pi_\gamma ^*\mu_t$ on $\widetilde C_t$, or of pairing $dz^2$ with $\pi_\gamma ^*\mu_t$ on all the inverse images of $C_t$. We will do the latter because the inverse images other than $\widetilde C_t$ are contained in the annuli of height $1$ at both ends of $A_{\pi/l_\gamma(u,\t)}$, and as such contribute at most  $\frac 1{2\pi h_t t}$ (one for each end) to the pairing.

Now for the main term, the pairing over $\widetilde C_t$, which we will write as
\[
\frac 1{4\pi h_t t} \int_{\widetilde C_t}  \left(\frac {d\overline w}{dw}-\frac {d\overline z}{dz}\right)dz^2+ \frac 1{4\pi h_t t}\int_{\widetilde C_t} \frac {d\overline z}{dz} dz^2.
\]
In the last integral, there exists a constants $K_1,K_2$ independent of $t$ such that 
\[
A_{\pi/l_\gamma(u,\t)-K_1}\subseteq \widetilde C_t\subseteq A_{\pi/l_\gamma(u,\t)-K_2},
\]
and so 
\[
\frac 1{4\pi h_t t}\int_{\widetilde C_t} \frac {d\overline z}{dz} dz^2= \frac 1{2\pi t} + O\left(\frac 1{h_tt}\right).
\]
Finally, for the term  
\[
\int_{\widetilde C_t}  \left(\frac {d\overline w}{dw}-\frac {d\overline z}{dz}\right)dz^2
\]
we choose $\epsilon>0$ and  find the $M$ in Lemma \ref{xavier}.  On the complement $\widetilde C'_t$ of the annuli of modulus $M$ we have
\[
\left|\int_{\widetilde C'_t}  \left(\frac {d\overline w}{dw}-\frac {d\overline z}{dz}\right)dz^2\right|\le \epsilon \frac 1{4\pi} \log\frac {16}{|t|}.
\]
The remainder, the integral over the annuli at the end is bounded by an almost round circle, and are the union of two annuli, one of which has modulus $M$ and the other is independent of $t$, so their area is bounded by some constant $M'$ and the total integral is bounded by 
\[
\frac {4M'}{2\pi |t | h_t }.
\]
Putting all this together, we find that 
\[
\int_{Y^*_\Gamma(u,\t)} \mu_t q_\gamma = \frac 1{2\pi t} (1+o(1)).
\]
as required.
\end{proof}

\subsection{The basis of $T_{\Phi(u,\t)}^\top\Q_\Gamma^{\Gamma'}$}  Our basis will consist of the elements $t_\gamma \cdot\Phi^*q_\gamma(u,\t)$ for $\gamma\in \Gamma-\Gamma'$, and appropriate $q_j(u,\t)$ defined below. 

The $q_\gamma(u,\t), \gamma\in \Gamma-\Gamma'$ are linearly independent for $\|\t\|$ sufficiently small, since their supports are very nearly disjoint, in different plumbing fixtures. (This fact is implied by the more general statement of Theorem 3.7 in \cite{scott82}; see also Lemma 2.6 of \cite{scott92}).

The $q_j$ are a bit harder to define. There is a natural ``projection'' map 
\[
\Pi_\gamma:Q^1(Y_\Gamma^*(u,\t)) \to L_\gamma(u,\t)
\]
onto the line $L_\gamma(u,\t) \subset Q^1(Y_\Gamma^*(u,\t))$ spanned by $q_\gamma(u,\t)$, defined as follows.

For any $q(u,\t)\in Q^1(Y_\Gamma^*(u,\t))$, the quadratic differential $(\pi_\gamma(u,\t))^*q(u,\t)$ on $A_{h_\gamma(u,\t)}$ can be developed as a Fourier series
\begin{equation}\label{Fourierseriesquaddiff}
(\pi_\gamma(u,\t))^*q(u,\t)=\left(\sum_{n=-\infty}^\infty b_n e^{2\pi i z} \right) dz^2,
\end{equation}
and we set
\[
\Pi_\gamma(u,\t)(q(u,\t)):= b_0 q_\gamma(u,\t).
\]
 Note that this is not a projector: it is not the identity on $L_\gamma(u,\t)$ (though it is very nearly so when $t_\gamma$ is small).

The following proposition can be found in Section 7 of \cite{masur}, and Chapter 3 of \cite{scott03}. 
 \begin{proposition}\label{resmap}
 The map $\Pi_\gamma(u,\t)$ extends continuously to the fiber of the vector bundle $Q^2_{Y_\Gamma/\P_\Gamma^{\Gamma'}}$ above $(u,\O)$, to the residue of such a quadratic differential at the node corresponding to $\gamma$.  
 \end{proposition}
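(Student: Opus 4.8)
The plan is to reduce everything to the local computation already carried out in the plumbing fixture around $\gamma$, namely the formula for the pushforward $q_\gamma(u,\t)=(\widetilde\pi_{\gamma,(u,\t)})_*dz^2$ and its limit established in Proposition \ref{continuous}. First I would observe that the statement is really about two things: that $\Pi_\gamma(u,\t)$, defined fiberwise on $Q^1(Y_\Gamma^*(u,\t))$ via the zeroth Fourier coefficient in the annular cover $A_{h_\gamma(u,\t)}$, assembles into a continuous bundle map as $(u,\t)$ varies (including across the wall $t_\gamma=0$), and that in the limit $t_\gamma\to 0$ this map is computed by the residue at the node corresponding to $\gamma$. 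For the continuity away from the wall, I would use the fact from Proposition \ref{continuous} that the normalized covering maps $\widetilde\pi_{\gamma,(u,\t)}$, the moduli $h_\gamma(u,\t)=\pi/(2\,l_\gamma(u,\t))$, and the hyperbolic metrics all vary continuously with $(u,\t)$ by Theorem \ref{rhocontinuous}; since $b_0$ is obtained by integrating $(\pi_\gamma(u,\t))^*q(u,\t)$ against $dz^2$ over a fundamental annulus of the cover, and the vector bundle $Q^2_{Y_\Gamma/\P_\Gamma}$ from Theorem \ref{vectorbundlethm} provides a continuously varying local frame of sections $q$ to test against, continuity of $(u,\t)\mapsto \Pi_\gamma(u,\t)(q(u,\t))$ follows from continuity of the data plus dominated convergence.

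Next I would handle the crossing of the wall $\{t_\gamma=0\}$, which is the heart of the matter. Fix a continuous local section $q$ of $Q^2_{Y_\Gamma/\P_\Gamma}$ near $(u_0,\O)$; by definition such a $q$ restricts on each fiber to a quadratic differential holomorphic except for at most simple poles at the $\Sigma(t)$ and double poles with equal residues at $N(t)$. In a plumbing fixture around $\gamma$, written in the coordinate $z$ of Section \ref{Q2vectorbundlesection} in which the standard quadratic differential $\frac14(dx/x-dy/y)^2$ is exactly $dz^2$, such a $q$ has an expansion $q=(a_\gamma+o(1))dz^2$ near the node, where $a_\gamma$ is precisely the residue of $q$ at that node. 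I would then argue that, as $t_\gamma\to 0$, the annular cover $A_{h_\gamma(u,\t)}$ has $h_\gamma(u,\t)\to\infty$ and its central portion $\widetilde C_{t_\gamma}$ exhausts a full neighborhood of the node; on this portion, by Lemma \ref{xavier}, the uniformizing coordinate $z$ of the cover and the plumbing coordinate of Section \ref{Q2vectorbundlesection} agree to first order, so the zeroth Fourier coefficient $b_0$ of $(\pi_\gamma(u,\t))^*q$ converges to $a_\gamma$, the residue. Combined with the fact (already in Proposition \ref{continuous}) that $q_\gamma(u,\t)$ itself limits to a quadratic differential with a double pole of residue $1$ at the node, this shows $\Pi_\gamma(u,\t)(q(u,\t))=b_0\,q_\gamma(u,\t)$ converges, in the $L^1$ topology on the fibers of $Q^2_{Y_\Gamma/\P_\Gamma^{\Gamma'}}$, to the unique (up to the standard double-pole model) quadratic differential on $Y_\Gamma(u,\O)$ whose residue at the $\gamma$-node equals that of $q$ and which is supported near that node — i.e., to the residue functional, as claimed.

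The main obstacle I anticipate is the uniformity of the Fourier-coefficient estimate across the wall: one must control the difference between integrating $(\pi_\gamma(u,\t))^*q$ against $dz^2$ over the genuinely embedded piece $\widetilde C_{t_\gamma}$ versus over the extra preimages of $C_{t_\gamma}$ living in the height-$1$ collars at the two ends of $A_{h_\gamma(u,\t)}$, and to show those end contributions are negligible in the limit. This is exactly the kind of estimate performed in the proof of Proposition \ref{estimate} — bounding contributions from the fixed-modulus end annuli and applying Lemma \ref{xavier} on the complement — so I would invoke the same splitting: write $b_0$ as a main integral over $\widetilde C_{t_\gamma}$ plus a remainder over the ends bounded by (area)$\times\sup|q|$, note the end areas are bounded independently of $t_\gamma$ while the relevant normalization does not blow up, and conclude the remainder vanishes relative to the main term. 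Granting that estimate and the coordinate comparison from Lemma \ref{xavier}, the continuous extension and its identification with the residue are immediate.
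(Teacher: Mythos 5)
Your proposal is correct and hinges on the same core observation as the paper's (very terse) proof: the plumbing parameterization $z\mapsto(\sqrt{t}e^{2\pi iz},\sqrt{t}e^{-2\pi iz})$ carries $dz^2$ to a constant multiple of $dx^2/x^2$, so the constant Fourier coefficient $b_0$ of $(\pi_\gamma(u,\t))^*q$ converges, as $t_\gamma\to 0$, to the coefficient of $dx^2/x^2$, i.e.\ the residue. You are right to invoke Lemma \ref{xavier} to compare the hyperbolic annular-cover coordinate on $A_{h_\gamma(u,\t)}$ with the flat plumbing coordinate on $A_{\frac{1}{4\pi}\log(16/|t_\gamma|)}$; the paper's one-sentence proof leaves this comparison implicit, but it is what makes ``the constant Fourier term becomes the coefficient of $dx^2/x^2$'' honest. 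One simplification you could make: the worry about controlling contributions from the extra preimages of $C_{t_\gamma}$ living in the end collars of $A_{h_\gamma(u,\t)}$ is unnecessary in this proposition. Since $(\pi_\gamma(u,\t))^*q/dz^2$ is holomorphic and $\Z$-periodic, $b_0$ is given by a contour integral $\int_0^1$ at any fixed height; choosing height $0$ places the contour entirely inside the central embedded lift $\widetilde C_{t_\gamma}$, so the ends never enter. This is a genuine difference from Proposition \ref{estimate}, where the pairing with the Beltrami form $\mu_t$ is an area integral over all of $A_{h_\gamma(u,\t)}$ and the end preimages really do contribute a term that must be estimated.
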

\begin{proof} Recall the map
\[
A_{\frac 1{4\pi} \log \frac {16}{|t|} } \to C_t\quad \text{given by }\quad z\mapsto (\sqrt t e^{2\pi iz}, \sqrt t e^{-2\pi i z}).
\] 
This maps transforms $dz^2$ into $dx^2/x^2$, and hence the constant term of the Fourier series in Equation \ref{Fourierseriesquaddiff} into the coefficient of $dx^2/x^2$, which tends to the residue as $t_\gamma\to 0$.
\end{proof}

Define the map 
\begin{equation}\label{boldpi}
\bfPi(u,\t):(Q^1(Y_\Gamma^*(u,\t)))^{\Gamma-\Gamma'}\longrightarrow \C^{\Gamma-\Gamma'}\text{\;\;\;given by\;\;\;}pr_\gamma\circ \bfPi(u,\t)=\Pi_\gamma(u,\t).
\end{equation}
The evaluation of the kernel of $\bfPi(u,\t)$ on $T_{(u,\t)} \P_\Gamma^\Gamma$ is a perfect pairing when $\t=\O$ by Proposition \ref{resmap}, the limit of the kernel consists of integrable quadratic differentials $Q^1(Y_\Gamma^*(u,\O))$. So for $\|t\|$ sufficiently small, evaluation of the kernel of $\bfPi(u,\t)$ on $T_{(u,\t)} \P_\Gamma^\Gamma$ is still a perfect pairing. As a consequence, there is a dual basis to the $\mu_j$ (see Proposition \ref{tanbasis}); call these elements of the dual basis $q_j(u,\t)$.

We summarize this discussion in Proposition \ref{cotanbasis}, which is included in Proposition 7.1 of \cite{masur}, and Proposition 1 of \cite{scott03}. 
\begin{proposition} \label{cotanbasis} The following set is a basis of $T_{\Phi(u,\t)}^\top\Q_\Gamma^{\Gamma'}$
\[
\left(\bigcup_{\gamma\in\Gamma-\Gamma'} t_\gamma\cdot \Phi^*q_\gamma(u,\t)\right) \cup \left(\bigcup_{j=1}^{\dim \Q_\Gamma^\Gamma} q_j(u,\t)\right).
\]
\end{proposition}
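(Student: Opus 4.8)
## Proof plan for Proposition \ref{cotanbasis}

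The plan is to assemble the claimed basis from the two ingredient families of quadratic differentials and verify linear independence and a dimension count. By Proposition \ref{dimensionvsquaddiff}, $\dim Q^1(Y_\Gamma^*(u,\t)) = 3g-3+n = \dim \Q_\Gamma^{\Gamma'}$, and the proposed set has $|\Gamma-\Gamma'| + \dim \Q_\Gamma^\Gamma$ elements; since $\P_\Gamma^{\Gamma'} = \Q_\Gamma^\Gamma \times \D^{\Gamma-\Gamma'}$ (Proposition \ref{tanbasis}), these two counts agree, so it suffices to prove the set is linearly independent in $T^\top_{\Phi(u,\t)}\Q_\Gamma^{\Gamma'}$ for $\|\t\|$ small. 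First I would recall that the cotangent space in question is identified with $Q^1(Y_\Gamma^*(u,\t))$ (the integrable holomorphic quadratic differentials on the normalized fiber with at most simple poles at marked points and at the node-preimages), as set up in Section \ref{phisect}.

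The key step is the following block-triangularity observation, organized around the residue map $\bfPi(u,\t)$ of Equation \ref{boldpi}. By construction each $q_j(u,\t)$ is the dual basis element (with respect to evaluation on the $\mu_j$) of an element of $\ker\bfPi(u,\t)$, so in particular $q_j \in \ker\bfPi(u,\t)$; meanwhile, by Proposition \ref{resmap}, as $t_\gamma \to 0$ the functional $\Pi_\gamma$ converges to ``take the residue at the node $\gamma$,'' and Proposition \ref{continuous} shows $q_\gamma(u,\t)$ itself limits to a quadratic differential with a genuine nonzero double-pole residue at that node and zero residue at the other nodes of $\Gamma-\Gamma'$ (their supports being nearly disjoint, in different plumbing fixtures, by the remark preceding the statement). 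Hence the $|\Gamma-\Gamma'| \times |\Gamma-\Gamma'|$ matrix $\bigl(\Pi_\gamma(u,\t)(t_\delta \Phi^* q_\delta(u,\t))\bigr)_{\gamma,\delta}$ is, after the rescaling by $t_\delta$, close to a nonsingular diagonal matrix for $\|\t\|$ small, using Proposition \ref{estimate} to control $t_\gamma\Phi^*q_\gamma$; and $\bfPi(u,\t)$ annihilates every $q_j$. Therefore any linear relation among the proposed basis elements, when pushed through $\bfPi(u,\t)$, forces all coefficients of the $t_\gamma\Phi^*q_\gamma$ to vanish, and what remains is a relation among the $q_j$ alone, which are independent because they are a dual basis to the independent set $\{\mu_j\}$ (Proposition \ref{tanbasis}) under the perfect pairing established at $\t=\O$ and persisting for small $\|\t\|$.

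Concretely, the steps in order are: (1) recall $T^\top_{\Phi(u,\t)}\Q_\Gamma^{\Gamma'} = Q^1(Y_\Gamma^*(u,\t))$ and its dimension from Proposition \ref{dimensionvsquaddiff}; (2) note the cardinality of the proposed set equals this dimension via $\P_\Gamma^{\Gamma'} = \Q_\Gamma^\Gamma\times\D^{\Gamma-\Gamma'}$; (3) invoke Proposition \ref{continuous} and the nearly-disjoint-support remark to see the $q_\gamma(u,\t)$, $\gamma\in\Gamma-\Gamma'$, are linearly independent and remain so after pullback and rescaling, with $\bfPi(u,\t)$ acting on them by a matrix close to an invertible diagonal one (using Proposition \ref{estimate} for the $dt_\gamma$ comparison); (4) invoke Proposition \ref{resmap} to see $\bfPi(u,\t)$ kills each $q_j(u,\t)$, since these lie in $\ker\bfPi(u,\t)$ by their definition as the dual basis to the $\mu_j$ within that kernel; (5) combine (3) and (4): apply $\bfPi(u,\t)$ to an arbitrary linear relation to kill the $t_\gamma\Phi^*q_\gamma$-coefficients, then use the perfect pairing of $\ker\bfPi(u,\t)$ with the $\mu_j$ (Proposition \ref{tanbasis}), valid for $\|\t\|$ small by continuity, to kill the $q_j$-coefficients; (6) conclude that a spanning set of the right cardinality which is linearly independent is a basis.

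The main obstacle is Step (3): one must make precise, uniformly in $\t$ near $\O$, that rescaling by $t_\gamma$ is exactly the right normalization so that $t_\gamma\Phi^*q_\gamma(u,\t)$ neither blows up nor degenerates, and that its image under $\bfPi(u,\t)$ stays bounded away from zero while the off-diagonal entries stay small. This is precisely where Proposition \ref{estimate} (the $o(1)$ comparison of $2\pi t_\gamma\Phi^*q_\gamma$ with $dt_\gamma$) and Proposition \ref{resmap} (continuity of the residue functional up to $\t=\O$) do the real work; once those are in hand the linear-algebra packaging is routine.
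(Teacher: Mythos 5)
Your strategy — a dimension count plus linear independence via the block-triangularity imposed by $\bfPi(u,\t)$ — is sound and is essentially what the paper's surrounding discussion implies (the paper gives no stand-alone proof; it treats the proposition as a summary of the preceding construction, supported by citations to Masur and Wolpert). Two of your supporting claims, however, are incorrect and need to be repaired, even though the overall logic survives once they are fixed.

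First, $\dim Q^1(Y_\Gamma^*(u,\t))$ is $3g-3+n-|\Gamma'|$, not $3g-3+n$. Proposition \ref{dimensionvsquaddiff} computes the dimension of $E(t)$, the fiber of the bundle $Q^2_{Y_\Gamma/\P_\Gamma}$, whose elements may have double poles with matched residues at each node; $Q^1(Y_\Gamma^*(u,\t))$ consists of the integrable ones, which have at most simple poles at the node-preimages, losing one dimension per node of the fiber. So the chain of equalities you wrote is internally inconsistent with the correct cardinality count you give afterward: the proposed set has $|\Gamma-\Gamma'|+\dim\Q_\Gamma^\Gamma = \dim\P_\Gamma^{\Gamma'} = \dim\Q_\Gamma^{\Gamma'} = 3g-3+n-|\Gamma'|$ elements. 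Only the latter count should be used.

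Second, Proposition \ref{estimate} does not control $\bfPi(u,\t)$ applied to $t_\gamma\,\Phi^*q_\gamma$: it estimates the $L^1$-operator norm of $2\pi t_\gamma\Phi^*q_\gamma-dt_\gamma$, i.e., the pairing with the \emph{tangent} vectors $\partial/\partial t_\delta$ and $\mu_j$ — precisely the ingredient the paper uses in the \emph{subsequent} pairing-matrix argument for local injectivity, which is a parallel but different computation from yours. The near-diagonality of $(\Pi_\delta(q_\gamma))_{\gamma,\delta}$ comes instead from the definition of $\Pi_\gamma$ as the constant Fourier coefficient of $\pi_\gamma^*q$ (so $\Pi_\gamma(q_\gamma)$ is close to $1$, being the constant term of $\pi_\gamma^*(\pi_\gamma)_*dz^2$) together with Propositions \ref{continuous} and \ref{resmap} for the vanishing of the off-diagonal limits (the limit $q_\gamma(u,\O)$ has at most a simple pole, hence zero residue, at a node $\delta\neq\gamma$). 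Finally, the perfect pairing of $\ker\bfPi(u,\t)$ with $T_{(u,\t)}\P_\Gamma^\Gamma$, which you attribute to Proposition \ref{tanbasis}, is asserted in the text following Proposition \ref{resmap}; Proposition \ref{tanbasis} only records the tangent-space basis.
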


\subsection{Local injectivity}  We get a matrix by evaluating our basis vectors of $T_{\Phi(u,\t)}^\top\Q_\Gamma^{\Gamma'}$ from Proposition \ref{cotanbasis} on our basis vectors of  $T_{(u,\t)}\P_\Gamma^{\Gamma'}$ from Proposition \ref{tanbasis}.  This matrix has a limit as $\t\to \O$, which is the triangular matrix
\[
\begin{array}{ccc} & t_\gamma\cdot\Phi^*q_\gamma(u,\t) & q_j(u,\t)\\
\partial/\partial t_\gamma &1 &\star\\
\mu_j &0 &1\end{array}
\]
Since this matrix is invertible, the map $\Phi:\P^{\Gamma'}_\Gamma \to \Q^{\Gamma'}_\Gamma$ is locally invertible at $(u,\t)$ for $\|\t\|$ sufficiently small. 
\begin{corollary}\label{yay}
Every  $(u,\t)\in\P_\Gamma^{\Gamma'}$ with $\|t\|$ sufficiently small has a neighborhood $V'$ such that $V:= \Phi(V')$ is open in $\Q_\Gamma^{\Gamma'}$, and $\Phi:V' \to V$ is a homeomorphism.
\end{corollary}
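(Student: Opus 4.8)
The plan is to obtain the corollary directly from the inverse function theorem. The preceding subsections have produced, for each $(u,\t)\in\P_\Gamma^{\Gamma'}$ with $\|\t\|$ small, the square matrix expressing the coderivative $d\Phi^\top_{(u,\t)}\colon T^\top_{\Phi(u,\t)}\Q_\Gamma^{\Gamma'}\to T^\top_{(u,\t)}\P_\Gamma^{\Gamma'}$ in the bases of Propositions \ref{cotanbasis} and \ref{tanbasis}, and have computed its limit as $\t\to\O$ to be the displayed unipotent triangular matrix. First I would record that the entries of this matrix depend continuously on $(u,\t)$: the $\partial/\partial t_\gamma$ column is controlled by Masur's estimate (Proposition \ref{estimate}) together with the fact that the $\mu_j$ are carried away from the plumbing fixtures, and the $\mu_j$ columns are controlled by the continuity of $q_\gamma(u,\t)$ as a section of $Q^2_{Y_\Gamma/\P_\Gamma}$ (Proposition \ref{continuous}), the continuity of $\Pi_\gamma(u,\t)$ and $\bfPi(u,\t)$ (Proposition \ref{resmap}), and the definition of the $q_j(u,\t)$ as the basis dual to the $\mu_j$ for the nondegenerate pairing of $\ker\bfPi(u,\t)$ with $T_{(u,\t)}\P_\Gamma^\Gamma$.

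Since the determinant is continuous and equals $1$ in the limit, the matrix is invertible for all $(u,\t)\in\P_\Gamma^{\Gamma'}$ with $\|\t\|$ sufficiently small; equivalently, $d\Phi_{(u,\t)}$ is a linear isomorphism of tangent spaces of equal finite dimension. This is a statement about a map of complex manifolds, since $\Phi\colon\P_\Gamma^{\Gamma'}\to\Q_\Gamma^{\Gamma'}$ is analytic by Proposition \ref{respect_strata} and $\Q_\Gamma^{\Gamma'}=\S_{\Gamma'}/\Delta_{\Gamma-\Gamma'}$ is a complex manifold by Section \ref{Xgamma}. The inverse function theorem then furnishes a neighborhood $V'$ of $(u,\t)$ in $\P_\Gamma^{\Gamma'}$ on which $\Phi$ restricts to a biholomorphism onto an open subset $V=\Phi(V')$ of $\Q_\Gamma^{\Gamma'}$; in particular $\Phi\colon V'\to V$ is a homeomorphism, which is the claim.

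At this stage there is no real obstacle remaining: the analytic difficulty has already been absorbed into Proposition \ref{continuous} (continuity of $q_\gamma$ up to the node, where it acquires a double pole with equal residues on the two branches) and Proposition \ref{estimate}. The only residual point to watch is that invertibility of the coderivative is an \emph{open} condition in $(u,\t)$, so that the phrase ``$\|\t\|$ sufficiently small'' is legitimate and can be chosen uniformly near $(u,\O)$; this is precisely what the continuity statements above guarantee. Thus the corollary merely packages ``the coderivative is invertible near $(u,\O)$'' into ``$\Phi$ is a local homeomorphism there''.
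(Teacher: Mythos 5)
Your argument is correct and matches the paper's: the paper deduces Corollary~\ref{yay} exactly by noting that the pairing matrix of Propositions~\ref{tanbasis} and~\ref{cotanbasis} has the displayed unipotent limit as $\t\to\O$, hence is invertible for $\|\t\|$ small, and then appealing (implicitly) to the inverse function theorem on the analytic map $\Phi:\P_\Gamma^{\Gamma'}\to\Q_\Gamma^{\Gamma'}$. Your additional remarks on continuity of the matrix entries just make explicit what the paper leaves tacit.
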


\subsection{Part three: The conclusion of the proof}

Choose $(u_0,\O) \in \Q_\Gamma^\Gamma= \P_\Gamma^\Gamma$, a neighborhood $V$ of $(u_0,\O)$ in $\Q_\Gamma$, and let $V'$ be the component of $\Phi^{-1}(V)\subseteq \P_\Gamma$ containing $(u_0,\O)$.  We may choose $V$ sufficiently small so that $\Phi:V' \to V$ is proper (Lemma \ref{properlemma}) and at  every point $(u,\t)\in V'$ the derivative of $\Phi$ is injective on the tangent space to the maximal stratum containing $(u,\t)$. For later purposes, suppose that in Fenchel-Nielsen coordinates, $V$ is a product of intervals and disks centered at $0$, corresponding to the curves in $\Gamma$. 

List the elements of $\Gamma$ as  $\Gamma=\{\gamma_1, \dots, \gamma_n\}$, and set $\Gamma^i:=\Gamma-\{\gamma_1,\ldots,\gamma_i\}$. We have the following commutative diagram.
\[
\xymatrix{
{\P_\Gamma^\Gamma}\ar[d]^\Phi\ar@{^{(}->}[r] &{\P_\Gamma^\Gamma \cup  \P_\Gamma^{\Gamma^1}}\ar[d]^\Phi\ar@{^{(}->}[r] &\cdots\ar@{^{(}->}[r] &{\P_\Gamma^\Gamma\cup\cdots\cup\P_\Gamma^{\Gamma^i}}\ar[d]^\Phi\ar@{^{(}->}[r] &\cdots   \ar@{^{(}->}[r]      &{\P_\Gamma^\Gamma\cup\cdots\cup\P_\Gamma^\emptyset}\ar[d]^\Phi\\
{\Q_\Gamma^\Gamma} \ar@{^{(}->}[r]&{\Q_\Gamma^\Gamma \cup\Q_\Gamma^{\Gamma^1}}\ar@{^{(}->}[r] &\cdots \ar@{^{(}->}[r]&{\Q_\Gamma^\Gamma\cup\cdots\cup\Q_\Gamma^{\Gamma^i}}\ar@{^{(}->}[r] &\cdots\ar@{^{(}->}[r] &{\Q_\Gamma^\Gamma\cup\cdots \cup\Q_\Gamma^\emptyset}}
\]
 The spaces in the top row should be intersected with $V'$, and those in the bottom row should be intersected with $V$.  
 We will show by induction on $i$ that the vertical maps are homeomorphisms.  To start the induction, note that this is true for $i=0$ since $\Phi:\P_\Gamma^\Gamma\to\Q_\Gamma^\Gamma$ is the identity.
 
 To simplify notation let us write 
 \[
  \P_\Gamma^i:= (\P_\Gamma^\Gamma\cup\cdots\cup\P_\Gamma^{\Gamma^i})\cap V' \quad \text{and}\quad 
   \Q_\Gamma^i:= (\Q_\Gamma^\Gamma\cup\cdots\cup\Q_\Gamma^{\Gamma^i})\cap V. 
 \]
 So suppose the statement is true for some $i<n$. Then the map 
 \[
 \Phi:\P_\Gamma^i \to \Q_\Gamma^i
 \]
 is a homeomorphism by the inductive hypothesis,
 and the map 
 \[
 \Phi:\P_\Gamma^{\Gamma^{i+1}}\cap V' \to \Q_\Gamma^{\Gamma^{i+1}}\cap V
 \]
 is proper (Lemma \ref{properlemma}) and a local homeomorphism (Corollary \ref{yay}), hence it is a covering map, so
 \[
 \Phi:\P^{i+1}_\Gamma \to \Q^{i+1}_\Gamma
 \]
 is a ramified covering map, possibly ramified along $\P^i_\Gamma$.  We need to show that it is not ramified; that is, we must show that the monodromy is trivial, so that the degree is $1$. 
 
 This is a purely topological issue, and we can use Fenchel-Nielsen coordinates in $\Q^{i+1}_\Gamma$; recall that $V$ was chosen so that in Fenchel-Nielsen coordinates, it is a product of intervals and disks centered at the origin: $\D_{r_\gamma}$, $\gamma\in \Gamma$. 
 
In Fenchel-Nielsen coordinates $\Q_\Gamma^i \subseteq \Q^{i+1}_\Gamma$ is defined by the equation $l(\gamma_{i+1}) = 0$, so that $\Q_\Gamma^{i+1}-\Q_\Gamma^i$ is a product of intervals, disks and one punctured disk, and its fundamental group is isomorphic to $\Z$.

Figure \ref{mono} shows that letting $t_{\gamma_{i+1}}$ go around the circle $|t_{\gamma_{i+1}}|=\rho$ corresponds to performing one Dehn twist around $\gamma_{i+1}$, hence in Fenchel-Nielsen coordinates the twist coordinate has made exactly one turn.  So the generator of the fundamental group of $\Q_\Gamma^{i+1}-\Q_\Gamma^i$ lifts as a loop to $\P_\Gamma^{i+1}-\P_\Gamma^i$, showing that the monodromy is trivial. 
\begin{figure}[h] 
   \centering
   \includegraphics[width=3in]{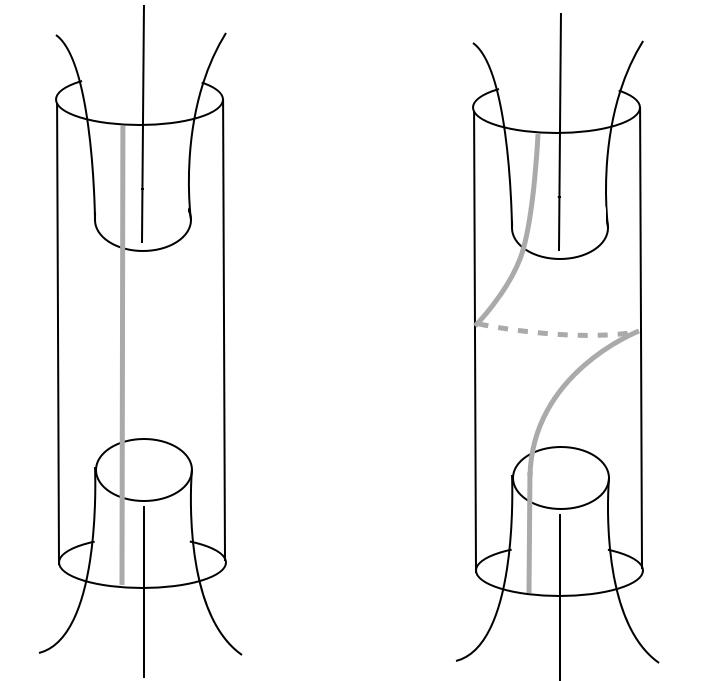} 
   \caption{On the left is the annulus $C_\eta$, where $\eta=t_{\gamma_{i+1}}$. As $t_{\gamma_{i+1}}$ moves in the circle $|t_{\gamma_{i+1}}|=\rho$ the annulus $C_\eta$ is affected by one Dehn twist; this is represented by the picture on the right. The grey line on the left is twisted once around the annulus to become the grey curve on the right.}
   \label{mono}
\end{figure}

We have ultimately proven the following theorem.
\begin{theorem}\label{chartsthm}
Every  $(u,\t)\in\P_\Gamma$ with $\|t\|$ sufficiently small has a neighborhood $V'$ such that $V:= \Phi(V')$ is open in $\Q_\Gamma$, and $\Phi:V' \to V$ is a homeomorphism.
\end{theorem}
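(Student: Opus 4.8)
The plan is to assemble the three ingredients developed above — properness of $\Phi$ near the deepest stratum (Lemma \ref{properlemma}), injectivity of $\Phi$ on each stratum near such points (Corollary \ref{yay}), and a monodromy computation — into a single stratum-by-stratum induction. Because $\P_\Gamma$ and $\Q_\Gamma$ are topological manifolds of the same dimension $6g-6+2n$, invariance of domain reduces the claim to showing that $\Phi$ is locally \emph{injective}, and it suffices to treat a point $(u_0,\mathbf 0)\in\P_\Gamma^\Gamma$: once $\Phi$ is known to be a homeomorphism from a neighborhood $V'$ of $(u_0,\mathbf 0)$ onto an open set, any point $(u,\mathbf t)$ with $\|\mathbf t\|$ sufficiently small lies in such a $V'$, and a small ball around it inside $V'$ is the required neighborhood.

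So fix $(u_0,\mathbf 0)\in\P_\Gamma^\Gamma=\Q_\Gamma^\Gamma$. By Lemma \ref{properlemma} there is a neighborhood $V$ of $\Phi(u_0,\mathbf 0)$ in $\Q_\Gamma$ and a component $V'$ of $\Phi^{-1}(V)$ containing $(u_0,\mathbf 0)$ such that $\Phi\colon V'\to V$ is proper; shrinking $V$, I may assume, using the invertibility of the coderivative established in Part two, that at each point of $V'$ the derivative of $\Phi$ along the maximal stratum through that point (which exists since $\Phi$ respects the strata, Proposition \ref{respect_strata}) is an isomorphism, and — passing to Fenchel--Nielsen coordinates — that $V$ is a product of open intervals and round disks $\D_{r_\gamma}$ centered at $0$, one disk for each $\gamma\in\Gamma$. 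List $\Gamma=\{\gamma_1,\dots,\gamma_n\}$, set $\Gamma^i:=\Gamma-\{\gamma_1,\dots,\gamma_i\}$, and put $\P_\Gamma^i:=(\P_\Gamma^\Gamma\cup\cdots\cup\P_\Gamma^{\Gamma^i})\cap V'$ and $\Q_\Gamma^i:=(\Q_\Gamma^\Gamma\cup\cdots\cup\Q_\Gamma^{\Gamma^i})\cap V$. I would prove by induction on $i$ that $\Phi\colon\P_\Gamma^i\to\Q_\Gamma^i$ is a homeomorphism; the case $i=n$ is the theorem. The base case $i=0$ holds because $\Phi$ is the identity on $\P_\Gamma^\Gamma$. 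For the inductive step, given that $\Phi\colon\P_\Gamma^i\to\Q_\Gamma^i$ is a homeomorphism, Corollary \ref{yay} makes $\Phi$ a local homeomorphism on the new stratum $\P_\Gamma^{\Gamma^{i+1}}\cap V'$ and Lemma \ref{properlemma} makes it proper there, so $\Phi$ restricted to that stratum is a covering map; together with the inductive hypothesis this exhibits $\Phi\colon\P_\Gamma^{i+1}\to\Q_\Gamma^{i+1}$ as a ramified covering, ramified at worst along $\P_\Gamma^i$.

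The crux — and the step I expect to be the main obstacle — is showing this ramified covering is in fact unramified, i.e.\ of degree $1$. This is a purely topological issue, and here Fenchel--Nielsen coordinates do the work: $\Q_\Gamma^i$ is cut out in $\Q_\Gamma^{i+1}$ by $l_{\gamma_{i+1}}=0$, so $\Q_\Gamma^{i+1}-\Q_\Gamma^i$ is a product of intervals, disks, and exactly one punctured disk, whence $\pi_1(\Q_\Gamma^{i+1}-\Q_\Gamma^i)\cong\Z$, generated by the loop on which $t_{\gamma_{i+1}}$ runs once around a small circle $|t_{\gamma_{i+1}}|=\rho$. One checks, as in Figure \ref{mono}, that running $t_{\gamma_{i+1}}$ around this circle amounts to performing a single Dehn twist around $\gamma_{i+1}$ in the plumbing construction, so that the twist coordinate $\tau_{\gamma_{i+1}}/l_{\gamma_{i+1}}$ advances by exactly $1$; hence the generator of $\pi_1(\Q_\Gamma^{i+1}-\Q_\Gamma^i)$ lifts to a genuine loop in $\P_\Gamma^{i+1}-\P_\Gamma^i$. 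Therefore the monodromy is trivial, the covering has degree $1$, and $\Phi\colon\P_\Gamma^{i+1}\to\Q_\Gamma^{i+1}$ is a homeomorphism, completing the induction. Aside from this monodromy count, the other delicate input has already been disposed of in Part two — namely the invertibility of the limiting triangular matrix pairing the tangent basis $\{\partial/\partial t_\gamma\}\cup\{\mu_j\}$ against the cotangent basis $\{t_\gamma\Phi^*q_\gamma\}\cup\{q_j\}$, which rests on Proposition \ref{estimate} and the residue computation of Proposition \ref{resmap}.
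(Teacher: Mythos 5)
Your proposal follows the paper's proof essentially line for line: the same reduction to a point $(u_0,\mathbf 0)$ in the deepest stratum, the same shrinking of $V$ (in Fenchel–Nielsen coordinates, as a product of intervals and disks) so that properness from Lemma \ref{properlemma} and invertibility of the stratumwise derivative both hold, the same filtration $\P_\Gamma^i,\Q_\Gamma^i$ indexed by $\Gamma^i=\Gamma-\{\gamma_1,\dots,\gamma_i\}$, the same induction with base case the identity on $\P_\Gamma^\Gamma$, the same identification of the inductive step as a possibly ramified covering via Corollary \ref{yay} and properness, and the same monodromy computation showing that circling $t_{\gamma_{i+1}}$ realizes a single Dehn twist so the degree is $1$. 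The only addition is the explicit remark that proving the statement near $\P_\Gamma^\Gamma$ implies it for nearby $(u,\mathbf t)$, which the paper leaves implicit; this is correct and harmless.
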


We have proven that for $\|t\|$ sufficiently small, the map $\Phi:\P_\Gamma\to\Q_\Gamma$ is a homeomorphism in a neighborhood of $(u,\t)$. It is not true, however that the map $\Phi:\P_\Gamma\to\Q_\Gamma$ is a global homeomorphism; the map is not globally injective (see \cite{hinich} for an example illustrating this). 

\section{The complex structure of $\Q_\Gamma$ and the universal property}\label{complexQ}

Let $(S,Z)$ be an oriented compact topological surface $S$, with a finite subset of marked points $Z$.  We will prove by induction on $|\Gamma|$ the following result.

\begin {theorem}\label{mainthm} For every multicurve $\Gamma$ on $S-Z$, there exists 
\begin{enumerate}
\item  a complex manifold structure on $\Q_\Gamma$,
\item a proper flat family $p_\Gamma:X_\Gamma \to \Q_\Gamma$ of stable curves with sections 
\[
\sigma_i:\Q_\Gamma \to X^*_\Gamma,
\]
\item  a $\Gamma$-marking $\phi_\Gamma$ of the family $p_\Gamma:X_\Gamma \to \Q_\Gamma$ by $(S,Z)$.
\end{enumerate}
This family is universal for these properties in the category of analytic spaces:
for any proper flat family $(p:X\to T,\Sigma)$ of stable curves parametrized by an analytic space  $T$, and with a $\Gamma$-marking $\phi$ by $(S,Z)$, there exists a unique analytic map $f:T \to \Q_\Gamma$ such that $X$ is isomorphic to $f^*X_\Gamma$ by an isomorphism that transforms $f^*\phi_\Gamma$ to $\phi$.
\end{theorem}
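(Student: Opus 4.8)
The plan is to prove the statement by induction on $|\Gamma|$. The base case is $\Gamma=\emptyset$: then $\Q_\emptyset=\S_\emptyset=\mathcal{T}_{(S,Z)}$ and $\Delta_\emptyset$ is trivial, so the assertion is the classical fact that Teichm\"uller space is a complex manifold carrying a universal marked family of smooth curves (Theorem 6.8.3 of \cite{teichbook}; cf.\ Remark \ref{GroCoverRemark}). Assume now the theorem for every multicurve of strictly smaller cardinality, and fix $\Gamma$. To put a complex structure on $\Q_\Gamma$ I would use two kinds of charts. Near the deepest stratum $\Q_\Gamma^\Gamma$ I would use the plumbing maps: by Theorem \ref{chartsthm} every $(u,\O)$ has a neighbourhood $V'\subseteq\P_\Gamma=U\times\D^\Gamma$ on which $\Phi$ is a homeomorphism onto an open $V\subseteq\Q_\Gamma$, and as $u_0$ and the plumbing data vary these cover a neighbourhood of $\Q_\Gamma^\Gamma$. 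For a point in a stratum $\Q_\Gamma^{\Gamma_0}$ with $\Gamma_0\subsetneq\Gamma$, I would use that $\Delta_\Gamma=\Delta_{\Gamma_0}\oplus\Delta_{\Gamma-\Gamma_0}$ and Corollary \ref{finitecosets} to see that a small enough neighbourhood in $U_\Gamma$ meets only the strata $\S_{\Gamma''}$ with $\Gamma''\subseteq\Gamma_0$ and is moved off itself by every element of $\Delta_\Gamma$ outside $\Delta_{\Gamma_0}$; hence $\Q_\Gamma$ is locally the quotient of a neighbourhood in $U_{\Gamma_0}$ by $\Delta_{\Gamma_0}$, canonically identified with a neighbourhood of the corresponding point of $\Q_{\Gamma_0}$, a complex manifold by the inductive hypothesis.

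Declaring all of these charts holomorphic is legitimate once the transition maps are biholomorphic. Each transition map is a homeomorphism between open subsets of complex manifolds which, by Proposition \ref{respect_strata} and the inductive description of the charts on the lower strata, is analytic on the open dense stratum $\Q_\Gamma^\emptyset$, whose complement is a proper analytic subset (locally the hypersurface $\{\prod_\gamma t_\gamma=0\}$). A continuous map that is holomorphic off a proper analytic subset is holomorphic (Riemann's removable-singularity theorem), so the transitions are biholomorphic; the resulting complex manifold has the dimension recorded in Section \ref{QGSect}. I would then define the family and marking piecewise and glue: over a plumbing chart take the plumbed family $Y_\Gamma\to\P_\Gamma$ of Section \ref{phisect} with the natural $\Gamma$-marking constructed there, and over a chart near $\Q_\Gamma^{\Gamma_0}$ take the pullback of $X_{\Gamma_0}\to\Q_{\Gamma_0}$, promoting its $\Gamma_0$-marking to a $\Gamma$-marking by recording in addition the homotopy classes of the uncollapsed curves $\Gamma-\Gamma_0$. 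On an overlap the two $\Gamma$-marked families so obtained are canonically isomorphic: on the dense stratum this is the universal property of Teichm\"uller space together with rigidity of markings (Section \ref{StableCurvesSect}), and the isomorphism extends holomorphically across the thin strata by continuity, the finiteness of automorphism groups of stable curves (Proposition \ref{finiteauts}), and the removable-singularity principle again. Gluing gives $(p_\Gamma:X_\Gamma\to\Q_\Gamma,\Sigma)$ with sections and a $\Gamma$-marking $\phi_\Gamma$.

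For the universal property, given a proper flat family $(p:X\to T,\Sigma)$ of stable curves with $\Gamma$-marking $\phi$, I would construct the classifying map $f:T\to\Q_\Gamma$ locally near each $t_0\in T$. Let $\Gamma_0\subseteq\Gamma$ be the set of curves collapsed at $X(t_0)$. If $\Gamma_0\subsetneq\Gamma$, then in a neighbourhood $T'$ of $t_0$ no curve of $\Gamma-\Gamma_0$ is collapsed, so $X|_{T'}$ is $\Gamma_0$-markable, the inductive hypothesis provides a unique analytic map $T'\to\Q_{\Gamma_0}$, and composing with the local identification $\Q_{\Gamma_0}\approx\Q_\Gamma$ gives an analytic map $T'\to\Q_\Gamma$. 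If $\Gamma_0=\Gamma$, then $X(t_0)$ is maximally degenerate along $\Gamma$, and I would produce an analytic map $g:T'\to\P_\Gamma$ whose $\D^\Gamma$-coordinates are the gluing parameters $t_\gamma$ furnished analytically by the local normal form of a flat family at a node (Definition \ref{FlatDef}) and whose $U$-coordinate is furnished analytically by the universal property of the little Teichm\"uller space $\Q_\Gamma^\Gamma$ applied to the complement of the plumbing collars; then $X|_{T'}\cong g^*Y_\Gamma$ compatibly with markings, and $\Phi\circ g:T'\to\Q_\Gamma$ is analytic since $\Phi$ is biholomorphic onto its image by construction. The uniqueness clauses of Theorem \ref{CoverThm} and of the lower-cardinality universal properties force these local maps to agree on overlaps, so they glue to a global analytic $f$ with $X\cong f^*X_\Gamma$ respecting markings; uniqueness of $f$ follows identically.

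The step I expect to be the main obstacle is the case $\Gamma_0=\Gamma$ of the universal property: showing that an arbitrary $\Gamma$-marked proper flat family is, locally, the pullback of the single plumbed family $Y_\Gamma$ under an \emph{analytic} map to $\P_\Gamma$, and that this map does not depend on the auxiliary choices in the plumbing construction (the fixtures $\psi_\gamma$, the base point $u_0$, the $C^\infty$-trivializations). Making this precise requires combining the local analytic structure of a flat family at a node with the universal property of Teichm\"uller space; once it is in hand, the compatibility of the charts and of the glued families, as well as the globalization, reduce to the removable-singularity argument above and to bookkeeping about markings.
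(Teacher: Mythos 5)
Your construction of charts and their compatibility, and the gluing of the universal family and $\Gamma$-marking, follow essentially the same lines as the paper (induction on $|\Gamma|$, plumbing charts $\Phi:V'\to V$ near $\Q_\Gamma^\Gamma$, identification with $\Q_{\Gamma_0}$ off $\Q_\Gamma^\Gamma$ via the covering $\Psi_\Gamma^{\Gamma_0}$, and a removable-singularity argument for compatibility). The paper obtains the compatibility of charts a bit more directly, by observing that the plumbed family over $V'$ can be re-marked as a $\Gamma'$-marked family near $v$ and invoking the inductive universal property of $\Q_{\Gamma'}$ to produce a holomorphic section of $\Psi_\Gamma^{\Gamma'}$; your route via Riemann extension is also correct and achieves the same thing.

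The genuine gap is in your Step 4, the universal property in the case $\Gamma_0=\Gamma$. You propose to produce an analytic map $g:T'\to\P_\Gamma$ directly from the local normal form of $X\to T$ at its nodes, and then set $f=\Phi\circ g$. But $\P_\Gamma=U\times\D^\Gamma$ was built from a \emph{specific} choice of plumbing fixtures on the universal curve over $\Q_\Gamma^\Gamma$, and the fixtures in $X\to T$ furnished by Definition~\ref{FlatDef} are an independent choice; the parameter $t_\gamma$ read off from the latter has no a priori relationship to the $\D^\Gamma$-coordinate of $\P_\Gamma$, and there is no canonical identification of the ``complement of the plumbing collars'' in $X(t)$ with the corresponding region in a fiber of $Y_\Gamma$. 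The assertion $X|_{T'}\cong g^*Y_\Gamma$ is therefore not something you can write down; in fact, once one knows $\Phi$ is a local isomorphism, producing such a $g$ is \emph{equivalent} to the analyticity of $f$ near the maximally degenerate stratum, so this step is circular. You flag it as ``the main obstacle'' and leave it open, but this is exactly where the content of the theorem lies. The paper avoids it: it uses Proposition~\ref{FNcontinuous} to show that the Fenchel--Nielsen classifying map $f:T\to\Q_\Gamma$ is continuous everywhere, observes by the inductive hypothesis that it is analytic on $T-f^{-1}(\Q_\Gamma^\Gamma)$ (where the marking can be promoted to a $\Gamma'$-marking for some $\Gamma'\subsetneq\Gamma$), and then extends across the codimension-$\geq 1$ set $f^{-1}(\Q_\Gamma^\Gamma)$ by the removable-singularity theorem --- with the trivial separate case where $f(T)\subseteq\Q_\Gamma^\Gamma$ entirely, handled by the classical universal property of $\S_\Gamma$. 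You should replace your $\Gamma_0=\Gamma$ construction with this argument; note in particular that continuity of $f$ does not require an analytic normal form at the nodes, only the continuity of the hyperbolic metric in families (Theorem~\ref{rhocontinuous}).
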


\begin{proof}  We will prove this by induction on the cardinality $|\Gamma|$ of $\Gamma$. The case $n=0$, corresponding to $\Gamma=\emptyset$, is precisely the universal property of Teichm\"uller space.

So suppose the result is true for all $m<n$, and suppose that $|\Gamma|=n$.

\noindent{\bf Step 1. The analytic structure of $\Q_\Gamma$.} At points of $\Q_\Gamma^\Gamma$, we use the maps $\Phi:V'\to V$  constructed in Theorem \ref{chartsthm} as charts.  Any point $v \in \Q_\Gamma-\Q_\Gamma^\Gamma$ belongs to some stratum $\Q_\Gamma^{\Gamma'}$.

There is a natural map $\Psi^{\Gamma'}_\Gamma:\Q_{\Gamma'} \to \Q_\Gamma$, which consists precisely of quotienting by $\Delta_{\Gamma-\Gamma'}$. 

We can easily understand how this group acts on $\Q_{\Gamma'}$ in Fenchel-Nielsen coordinates. With respect to any maximal multicurve $\widetilde\Gamma$ on $S-Z$ containing $\Gamma$, the factor corresponding to $\gamma \in \Gamma-\Gamma'$ in the Fenchel-Nielsen description of $\Q_{\Gamma'}$ (see Section \ref{FenchelNielsen}) is of the form $\R_+\times \R$, and the Dehn twist around $\gamma$ gives the map
\[
(l_\gamma, \tau_\gamma) \mapsto (l_\gamma, \tau_\gamma+ l_\gamma).
\]
Thus  $\Delta_{\Gamma-\Gamma'}$ acts freely (without fixed points), and properly discontinuously on $\Q_{\Gamma'}$.  In particular, the map  $\Psi_\Gamma^{\Gamma'}$ is a covering map of its image, which is an open subset of $\Q_\Gamma$ containing $v$.  We choose as a chart at $v$ a section of $\Psi_\Gamma^{\Gamma'}$ over a neighborhood of $v$ contained in the image.

We now have charts at every point, and we have to show that the transition functions are analytic.  Clearly the only difficulty is when $v$ is in the image of a $\Phi:V'\to V$ as in Theorem \ref{chartsthm}, and also in the image of  $\Psi_\Gamma^{\Gamma'}$. The space $V'$ parametrizes a proper flat family of stable curves $Y_\Gamma$ together with a $\Gamma$-marking by $(S,Z)$.  Since the curves of $\Gamma-\Gamma'$ are not collapsed at $v$, the point $v$ has a neighborhood $V''$ above which the marking can be promoted to a $\Gamma'$-marking by $(S,Z)$, and as such induces and analytic mapping $V''\to \Q_{\Gamma'}$ that is a section of $\Psi_\Gamma^{\Gamma'}$.  This proves that on $V''$ the complex structures coincide.  

\noindent{\bf Step 2. The universal curve above $\Q_\Gamma$}. This is practically identical to the argument above.  

For any $\Gamma'\subseteq \Gamma$, the group $\Delta_{\Gamma-\Gamma'}$ acts on $X_{\Gamma'}$ compatibly with the action on $\Q_{\Gamma'}$ (it takes a stable curve to the same stable curve by the identity, changing the marking by the appropriate Dehn twists). The quotient by this action is the curve parametrized by the image of $\Psi_\Gamma^{\Gamma'}$ in $\Q_\Gamma$.

We have already constructed the curve $Y_\Gamma$ over $V'$, hence over $V$ by definition, and again by the universal property these curves are canonically isomorphic where they are both defined.

\noindent{\bf Step 3. The $\Gamma$-marking by $(S,Z)$.} Again this is more or less obvious. The curve $p_{\Gamma'}:X_{\Gamma'} \to \Q_{\Gamma'}$ comes with a $\Gamma'$-marking by $(S,Z)$, and when we quotient by $\Delta_{\Gamma-\Gamma'}$ we identify points whose markings differ by Dehn twists around elements of ${\Gamma-\Gamma'}$, constructing a $\Gamma$-marking by $(S,Z)$ on the quotient.

The curve $Y_\Gamma$ came with a $\Gamma$-marking by $(S,Z)$, and it is clear that on overlaps these agree because the identification consisted of promoting a $\Gamma$-marking to a $\Gamma'$-marking and using the universal property.

\noindent{\bf Step 4. The universal property.}  Let $X\to T$ be a proper flat family of stable curves, with a $\Gamma$ marking by $(S,Z)$, parametrized by a connected analytic space $T$. 

Choose a maximal multicurve $\widetilde \Gamma$ on $S-Z$ containing $\Gamma$.  Then there are Fenchel-Nielsen coordinates
\[
 FN_\Gamma:T \to (\R_+\times \R)^{\widetilde\Gamma -\Gamma}\times \C^\Gamma,
 \]
 and these induce a continuous map $f:T \to \Q_\Gamma$.
 
 There are two cases to consider: either the image of $T$ is contained in $\Q_\Gamma^\Gamma$ or it isn't.  In the first case, $f$ is analytic because of the universal property of the Teichm\"uller space $\S_\Gamma$, since $\Q_\Gamma^\Gamma= \S_\Gamma$.
 
In the second case, $f$ is analytic on $T-f^{-1}(\Q_\Gamma^\Gamma)$.  But then it is analytic on $T$ by the removable singularity theorem: it is continuous and analytic except on a set of codimension at least 1.
\end{proof}

\section{The cotangent bundle of $\Q_\Gamma$}\label{cotansect}

Our description of the cotangent bundle to $\Q_\Gamma$ is not quite satisfactory, it is not as complex-analytic as one would like, largely because the sections $q_\gamma$ of $Q^2_{Y_\Gamma/\P_\Gamma}$ are presumably not analytic, just continuous (see Proposition \ref{continuous}). 

In particular, $Q^2_{Y_\Gamma/\P_\Gamma}$ is not the cotangent bundle, however tempting it might be to think it is, because $q_\gamma$ is a section of $Q^2_{Y_\Gamma/\P_\Gamma}$, but since $2\pi t_\gamma\cdot\Phi^*q_\gamma \sim dt_\gamma$ (Proposition \ref{estimate}) is a section of the cotangent bundle, we see that $q_\gamma$ has a pole when $t_\gamma=0$.

This suggests another candidate for the cotangent bundle.  Consider the divisor $D\subset \P_\Gamma$ defined by
\[
\prod_{\gamma\in \Gamma} t_\gamma =0.
\]
Associated to this divisor is the invertible sheaf of ${\cal O}_{\P_\Gamma}(-D)$ of analytic functions that vanish on $D$, itself the sheaf of sections of a line bundle $L_D$. The cotangent bundle might be $Q^2_{Y_\Gamma/\P_\Gamma}\otimes L_D$.

But this isn't right either: sections of that bundle pair to $0$ with the Beltrami forms $\mu_j$ defined in Section \ref{belts}.

So we are compelled to give a more esoteric description.  The bundle $Q^2_{Y_\Gamma/\P_\Gamma}$ is naturally the direct sum $E\oplus K$ of two sub-bundles: the sub-bundle $E$ spanned by the $q_\gamma, \gamma\in \Gamma$, and the kernel $K$ of $\bfPi$ (see Line \ref{boldpi}). 

A restatement of Proposition \ref{cotanbasis} is the following result.
\begin{theorem} In a neighborhood of $\Q_\Gamma^\Gamma$, the identification of $T^\top\Q_\Gamma$ with $Q^2_{Y_\Gamma/\P_\Gamma}$ over $\Q_\Gamma^\emptyset$ extends to an isomorphism
\[
T^\top\Q_\Gamma  \to K\oplus (E\otimes L_D).
\]
\end{theorem}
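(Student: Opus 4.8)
The plan is to recognise the statement as a repackaging of Proposition~\ref{cotanbasis} and to carry out the translation carefully, the only delicate point being the divisor twist by $L_D$. First I would record the two sides over the open stratum $\Q_\Gamma^\emptyset = \P_\Gamma^\emptyset$. There the fibres of $Y_\Gamma$ are smooth, so the universal property of $\Q_\Gamma$ (Theorem~\ref{mainthm}) gives the classical identification of $T^\top\Q_\Gamma$ with the bundle whose fibre at $(u,\t)$ is the space $Q^1(Y_\Gamma^*(u,\t))$ of integrable holomorphic quadratic differentials; since there are no nodes this is exactly $Q^2_{Y_\Gamma/\P_\Gamma}|_{\Q_\Gamma^\emptyset}=(E\oplus K)|_{\Q_\Gamma^\emptyset}$. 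As the divisor $D$ misses $\Q_\Gamma^\emptyset$, the section $\prod_{\gamma\in\Gamma}t_\gamma$ of $\mathcal O(-D)$ trivialises $L_D$ over $\Q_\Gamma^\emptyset$, so $K\oplus(E\otimes L_D)$ restricts there to $K\oplus E$, and we get a canonical isomorphism $T^\top\Q_\Gamma|_{\Q_\Gamma^\emptyset}\xrightarrow{\ \sim\ }(K\oplus E\otimes L_D)|_{\Q_\Gamma^\emptyset}$; this is the identification to be extended.

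Next I would build the extension on a uniform neighbourhood $\{\|\t\|<\epsilon\}$ of $\Q_\Gamma^\Gamma$, with $\epsilon$ small enough that the near-disjointness of their supports makes the $q_\gamma(u,\t)$ independent and, together with $K=\ker\bfPi$, makes the splitting $Q^2_{Y_\Gamma/\P_\Gamma}=E\oplus K$ hold throughout. It is cleanest to define the inverse map $\Theta\colon K\oplus(E\otimes L_D)\to T^\top\Q_\Gamma$ on frames, using the chart $\Phi$ to identify $T_{\Phi(u,\t)}\Q_\Gamma$ with $T_{(u,\t)}\P_\Gamma$. On the $K$-summand, send the local frame $\{q_j\}$ (the duals of the Beltrami forms $\mu_j$ in the perfect pairing preceding Proposition~\ref{cotanbasis}) to the cotangent vectors they represent; by Proposition~\ref{resmap} and that pairing statement these vary continuously and extend across $\Q_\Gamma^\Gamma$. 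On the $E\otimes L_D$-summand, writing $E=\bigoplus_{\gamma}\mathcal O\,q_\gamma$ so that the twist by $D=\{\prod_{\delta}t_\delta=0\}$ is carried one factor $t_\gamma$ at a time by the line summand $\mathcal O\,q_\gamma$, send the frame vector $q_\gamma\otimes t_\gamma$ to the cotangent vector $2\pi t_\gamma\cdot\Phi^*q_\gamma$, which by Proposition~\ref{estimate} extends continuously across $\{t_\gamma=0\}$ with limiting value $dt_\gamma$. By Proposition~\ref{continuous} the $q_\gamma$ themselves extend continuously to sections of $Q^2_{Y_\Gamma/\P_\Gamma}$, so $\Theta$ is a well-defined continuous bundle map, and by the previous paragraph it agrees over $\Q_\Gamma^\emptyset$ with the natural identification.

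Then I would check that $\Theta$ is a bundle isomorphism near $\Q_\Gamma^\Gamma$. Pairing the target cotangent vectors $\{q_j\}\cup\{2\pi t_\gamma\Phi^*q_\gamma\}$ against the basis $\bigcup_{\gamma}\partial/\partial t_\gamma\cup\bigcup_j\mu_j$ of $T_{(u,\t)}\P_\Gamma$ (Proposition~\ref{tanbasis}) produces the matrix of $\Theta$ in the source and target frames; in the limit $\t\to\O$ this is the triangular matrix with $1$'s on the diagonal displayed in Section~\ref{phisect}, hence it is invertible for $\|\t\|<\epsilon$ after shrinking $\epsilon$. Thus $\Theta$ is an isomorphism over a neighbourhood of $\Q_\Gamma^\Gamma$ and its inverse is the asserted map $T^\top\Q_\Gamma\to K\oplus(E\otimes L_D)$. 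I would close by observing that the description is only continuous, not holomorphic, exactly because the $q_\gamma$ are (as far as is known) merely continuous sections of $Q^2_{Y_\Gamma/\P_\Gamma}$, consistently with the remarks opening this section.

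The only nonroutine point --- Proposition~\ref{cotanbasis} having already done the analytic work --- is the divisor bookkeeping. One must see that it is $q_\gamma\otimes t_\gamma$, not $q_\gamma\otimes\prod_{\delta}t_\delta$, that should map to $2\pi t_\gamma\Phi^*q_\gamma$: the twist by $D=\sum_\gamma D_\gamma$ with $D_\gamma=\{t_\gamma=0\}$ must be read off summand by summand, which is legitimate because, after shrinking the neighbourhood so that each $\mathcal O(-D_\gamma)$ is trivial there, the $\gamma$-th line summand $\mathcal O\,q_\gamma\otimes L_D$ is canonically framed by $q_\gamma\otimes t_\gamma$; with the naive choice $\Theta$ would instead degenerate along $\Q_\Gamma^\Gamma$. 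The other thing to be careful about is that, having made the right choice, $\Theta$ is a genuine isomorphism of bundles over the whole neighbourhood and not merely fibrewise or stratum by stratum --- and this is precisely where the continuous-extension statements, Propositions~\ref{continuous} and~\ref{estimate}, together with the invertibility of the triangular transition matrix, are needed.
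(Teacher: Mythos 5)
Your proof is essentially the paper's: the paper itself simply says the theorem is ``a restatement of Proposition~\ref{cotanbasis},'' and you have spelled out exactly what that restatement amounts to, using the right ingredients (Propositions~\ref{tanbasis}, \ref{cotanbasis}, \ref{continuous}, \ref{estimate}, \ref{resmap} and the triangular transition matrix). Your observation about the divisor is also substantively correct, and it is a real issue the paper glosses over: if $|\Gamma|\ge 2$, then with $L_D={\cal O}(-D)$ and $D=\sum_\gamma D_\gamma$ the putative map sends the frame element $q_\gamma\otimes\prod_\delta t_\delta$ to $\bigl(\prod_\delta t_\delta\bigr)\Phi^*q_\gamma\sim\frac{1}{2\pi}\bigl(\prod_{\delta\ne\gamma}t_\delta\bigr)dt_\gamma$, which vanishes on $\{t_\delta=0\}$ for every $\delta\ne\gamma$, so the asserted isomorphism would degenerate on those strata. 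So the theorem must be read with each line summand ${\cal O}\,q_\gamma\subset E$ twisted only by its own ${\cal O}(-D_\gamma)$, i.e.\ as $T^\top\Q_\Gamma\cong K\oplus\bigoplus_{\gamma\in\Gamma}\bigl({\cal O}\,q_\gamma\otimes{\cal O}(-D_\gamma)\bigr)$.

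The one place your write-up is muddled is the justification for this summand-by-summand reading. You claim that ``after shrinking the neighbourhood so that each ${\cal O}(-D_\gamma)$ is trivial there, the $\gamma$-th line summand ${\cal O}\,q_\gamma\otimes L_D$ is canonically framed by $q_\gamma\otimes t_\gamma$.'' This is not right as stated: $t_\gamma$ is not a section of ${\cal O}(-D)$ (it vanishes only on $D_\gamma$, not on the whole of $D$), so ``$q_\gamma\otimes t_\gamma$'' does not denote an element of ${\cal O}\,q_\gamma\otimes L_D$; and the triviality of ${\cal O}(-D_\delta)$ as a line bundle on the contractible space $\P_\Gamma$ does not furnish a canonical isomorphism $L_D\cong{\cal O}(-D_\gamma)$ respecting the inclusions into ${\cal O}$. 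You should say plainly that the theorem's formula ``$E\otimes L_D$'' is a shorthand that is literally incorrect for $|\Gamma|\ge 2$, and that the intended (and provable) statement replaces it by $\bigoplus_\gamma{\cal O}\,q_\gamma\otimes{\cal O}(-D_\gamma)$; then $q_\gamma\otimes t_\gamma$ is genuinely the canonical frame of the $\gamma$-th summand and the rest of your argument goes through verbatim.
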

\begin{corollary}\label{coder}
The coderivative of the inclusion $\Q_\Gamma^\Gamma\hookrightarrow \Q_\Gamma$ is given by the projection 
\[
K\oplus (E\otimes L_D) \to Q^1(Y^*_\Gamma(u,\O)).
\]
\end{corollary}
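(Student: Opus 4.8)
The plan is to check the identity fibrewise, at an arbitrary point $(u,\O)\in\Q_\Gamma^\Gamma$: both sides are morphisms of vector bundles over $\Q_\Gamma^\Gamma$, so it suffices to compare them on each fibre, where the computation reduces to reading off the triangular matrix from the local injectivity argument.

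First I would describe the coderivative abstractly. Write $\iota:\Q_\Gamma^\Gamma\hookrightarrow\Q_\Gamma$ for the inclusion; its derivative $d\iota$ at $(u,\O)$ is the injection of $T_{(u,\O)}\Q_\Gamma^\Gamma$ into $T_{(u,\O)}\Q_\Gamma$, so the coderivative $(d\iota)^\top:T^\top_{(u,\O)}\Q_\Gamma\to T^\top_{(u,\O)}\Q_\Gamma^\Gamma$ is surjective, with kernel the conormal space, i.e. the covectors annihilating $d\iota\bigl(T_{(u,\O)}\Q_\Gamma^\Gamma\bigr)$. I would then use the chart $\Phi:V'\to V$ of Theorem \ref{chartsthm}, which is the identity on $\P_\Gamma^\Gamma=\Q_\Gamma^\Gamma$, to identify $T_{(u,\O)}\Q_\Gamma$ with $T_{(u,\O)}\P_\Gamma$ and $T_{(u,\O)}\Q_\Gamma^\Gamma$ with $T_{(u,\O)}\P_\Gamma^\Gamma$, so that $d\iota$ becomes the evident inclusion. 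By Proposition \ref{tanbasis}, $T_{(u,\O)}\P_\Gamma$ has the basis $\{\partial/\partial t_\gamma\}_{\gamma\in\Gamma}\cup\{\mu_j\}_{j}$ while $T_{(u,\O)}\P_\Gamma^\Gamma$ is spanned by the $\mu_j$ alone; hence $\ker(d\iota)^\top$ consists of exactly those covectors that pair to zero with every $\mu_j$.

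Next I would bring in the cotangent frame. By the preceding theorem (together with Proposition \ref{cotanbasis}), in a neighborhood of $\Q_\Gamma^\Gamma$ the isomorphism $T^\top\Q_\Gamma\cong K\oplus(E\otimes L_D)$ carries the frame $\{t_\gamma\cdot\Phi^*q_\gamma\}_{\gamma\in\Gamma}$ to a frame of $E\otimes L_D$ and the frame $\{q_j\}_{j}$ to a frame of $K$, and at $(u,\O)$ one has $K(u,\O)=Q^1(Y_\Gamma^*(u,\O))$ by Proposition \ref{resmap} and the discussion following it. Since the sections $q_\gamma$ and $q_j$ extend continuously to $\t=\O$ (Propositions \ref{continuous} and \ref{resmap}) and the $\mu_j$ are fixed Beltrami forms carried away from the plumbing fixtures, the pairings $\langle t_\gamma\cdot\Phi^*q_\gamma,\mu_j\rangle$ and $\langle q_j,\mu_k\rangle$ at $(u,\O)$ are the $\t\to\O$ limits of the corresponding entries of the matrix displayed in the local injectivity argument, read along the $\mu_j$-rows. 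That matrix being triangular with $0$ in the $t_\gamma\cdot\Phi^*q_\gamma$ columns and the identity in the $q_j$ columns of those rows, we get $\langle t_\gamma\cdot\Phi^*q_\gamma,\mu_j\rangle=0$ for all $\gamma,j$, so $(d\iota)^\top$ kills the summand $E\otimes L_D$; and $\langle q_j,\mu_k\rangle=\delta_{jk}$, so $(d\iota)^\top(q_j)$ is the covector of $T^\top_{(u,\O)}\Q_\Gamma^\Gamma$ dual to $\mu_j$. Because the $q_j$ were defined as the basis of $\ker\bfPi$ dual to the $\mu_j$ for the pairing that in the limit $\t=\O$ is the integration pairing of a quadratic differential in $Q^1(Y_\Gamma^*(u,\O))$ against a Beltrami form, which is exactly the tangent--cotangent pairing on the little Teichm\"uller space $\Q_\Gamma^\Gamma$, this dual covector is $q_j$ itself under the canonical identification $T^\top_{(u,\O)}\Q_\Gamma^\Gamma=Q^1(Y_\Gamma^*(u,\O))$. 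Thus $(d\iota)^\top$ is the identity on $K(u,\O)$ and zero on $(E\otimes L_D)(u,\O)$, which is precisely the projection in the statement.

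The one step needing genuine care is this last compatibility: checking that the copy of $Q^1(Y_\Gamma^*(u,\O))$ occurring as the limiting fibre $K(u,\O)$ of $\ker\bfPi$ and the copy occurring as the cotangent space to $\Q_\Gamma^\Gamma$ are identified so as to carry $q_j$ to $q_j$ for every $j$. This is settled by unwinding the definition of the $q_j$ through Proposition \ref{resmap} and observing that the relevant pairing is the same integration pairing in both incarnations. The remainder is bookkeeping around Theorem \ref{chartsthm}, Proposition \ref{tanbasis}, and the triangular matrix of the local injectivity argument.
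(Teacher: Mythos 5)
Your argument is correct and follows the same route the paper takes: identify $K(u,\O)$ with $Q^1(Y_\Gamma^*(u,\O)) = T^\top_{\Phi(u,\O)}\Q_\Gamma^\Gamma$, and observe that the $E\otimes L_D$ summand lies in the conormal space because its frame $\{t_\gamma\cdot\Phi^*q_\gamma\}$ pairs to zero with the tangent vectors $\mu_j$ to the stratum (exactly the $0$ entry in the triangular matrix). The paper's proof is the terse sentence ``sections of $(E\otimes L_D)(u,\O)$ evaluate to $0$ on the vectors $\mu_j$ tangent to the stratum''; you have simply unpacked it, and your closing concern about the two incarnations of $Q^1(Y_\Gamma^*(u,\O))$ is settled correctly by noting both use the same integration pairing.
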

In particular, at a point $(u,\O)\in \P_\Gamma^\Gamma$, $K(u,\O)$ is the space of integrable quadratic differentials on $Y_\Gamma^*(u,\O)$, precisely the cotangent space $Q^1(Y^*_\Gamma(u,\O))$ of the stratum $\Q_\Gamma^\Gamma$ at $\Phi(u,\O)$, and the coderivative of the inclusion is the projection 
\[
K\oplus (E\otimes L_D) \to Q^1(Y^*_\Gamma(u,\O)).
\]
Indeed, the sections of $(E\otimes L_D)(u,\O)$ evaluate to $0$ on the vectors $\mu_j$ tangent to the stratum.

\section{The main theorem}\label{MainThmSect}

\subsection{The universal property of $\augmod$}

To summarize, Theorem \ref{mainthm} asserts that the analytic manifold $\Q_\Gamma$ represents the functor of $\Gamma$-marked proper flat families of stable curves (in the category of analytic spaces). More precisely, let $(S,Z)$ be a topological surface of genus $g$ with $n$ marked points, and let $\Gamma$ be a multicurve on $S-Z$. Let ${\bf SC}_\Gamma$ be the functor 
\[
{\bf SC}_\Gamma:{\bf{AnalyticSpaces}}\to {\bf{Sets}}
\]
which associates to a complex analytic space $A$, the set of isomorphism classes of flat, proper, $\Gamma$-marked families of stable curves of genus $g$ with $n$ marked points, parametrized by $A$. Then the morphism of functors from $\Mor(\bullet,\Q_\Gamma)$ to ${\bf SC}_\Gamma$ given by pullback of the universal curve above $\Q_\Gamma$ is an isomorphism of functors. The universal property of $\Q_\Gamma$ leads to the universal property of $\augmod$; we now spell this out. We will prove 
\begin{theorem}\label{Thm}
There exists a natural transformation $\eta:\SC_{g,n}\to\Mor(\bullet,\augmod)$ with the following universal property: for every analytic space $Y$ together with a natural transformation $\eta_Y:\SC_{g,n}\to\Mor(\bullet,Y)$, there exists a unique morphism $F:\augmod\to Y$ such that for all analytic spaces $A$, the following diagram commutes.
\begin{equation}\label{diag}
\xymatrix{
&							&\Mor(A,\augmod)\ar[dd]^{F_*}		\\
&\SC_{g,n}(A)\ar[ru]^{\eta}\ar[dr]_{\eta_Y}		&								\\
&							&\Mor(A,Y)}
\end{equation}					
\end{theorem}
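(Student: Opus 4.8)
The plan is to build the natural transformation $\eta$ directly out of the universal $\Gamma$-marked families $X_\Gamma\to\Q_\Gamma$ of Theorem~\ref{mainthm}, using that $\augmod$ is, near each of its points, a quotient of such a $\Q_\Gamma$ by a finite group, and then to obtain $F$ by an invariance-and-descent argument. The first ingredient I would record is precisely this local structure of $\augmod$ underlying its analytic structure: given $\tau\in\S_\Gamma$ represented by a $\Gamma$-marking of a stable curve $(X,Z_X)$, Corollary~\ref{finitecosets} and Proposition~\ref{MCGSZGprop} show that a sufficiently small neighbourhood $W$ of the image of $\tau$ in $\Q_\Gamma$ carries an action of a finite group $G\cong\aut(X,Z_X)$ such that $W/G\to\augmod$ is an open embedding; as $\tau$ varies these charts cover $\augmod$ and exhibit it as a complex orbifold, in particular a reduced analytic space, and the projections $W\to W/G\hookrightarrow\augmod$ are analytic, with $W$ carrying the restriction of the universal $\Gamma$-marked family.

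To construct $\eta$, I would take a proper flat family $(p:X\to A,\Sigma)$ of stable curves of genus $g$ with $n$ marked points and define $f:A\to\augmod$ on points by sending $a$ to the class in $\augmod$ of any marking of $(X(a),\Sigma(a))$ by $(S,Z)$ respecting the labelling of the marked points; since any two markings of a fixed stable curve differ by an element of $\mod(S,Z)$, $f$ is well defined as a map of sets, and it is plainly compatible with pullback. The real work is to check that $f$ is analytic. For this I would fix $a_0\in A$, let $\Gamma_0$ be the multicurve on $S-Z$ whose components correspond to the nodes of $X(a_0)$, and shrink $A$ so that the markability criterion applies: near $a_0$ the family is a plumbing of the trivial bundle $\trimmed(X_A)$, hence $\Gamma_0$-markable. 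Choosing a $\Gamma_0$-marking, Theorem~\ref{mainthm} furnishes an analytic map $A\to\Q_{\Gamma_0}$ carrying $a_0$ into the deepest stratum $\Q_{\Gamma_0}^{\Gamma_0}$, near which $\Q_{\Gamma_0}\to\augmod$ is the analytic quotient map of the previous paragraph; composing recovers $f$ near $a_0$, and since analyticity is local, $f$ is analytic. This defines $\eta:\SC_{g,n}\to\Mor(\bullet,\augmod)$.

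For the universal property, given $Y$ and $\eta_Y$, I would work chart by chart. On a chart $W\subseteq\Q_\Gamma$ as above, apply $\eta_Y$ to the family underlying the universal $\Gamma$-marked family on $W$ to get an analytic map $g_W:W\to Y$; since each element of $G$ transforms this family into an isomorphic one, naturality of $\eta_Y$ forces $g_W$ to be $G$-invariant, so by the universal property of the analytic quotient $W\to W/G$ (Cartan) it descends to an analytic map $\overline{g}_W:W/G\to Y$. On overlaps the $\overline{g}_W$ agree, being computed from canonically isomorphic universal families, so they glue to an analytic map $F:\augmod\to Y$. The identity $F_*\circ\eta=\eta_Y$ I would verify locally: for a family $X/A$ the map $f=\eta(A)([X/A])$ factors near any point as $A\to W\to W/G\hookrightarrow\augmod$, so $F\circ f$ equals $g_W$ pulled back to $A$, which by naturality of $\eta_Y$ is $\eta_Y(A)([X/A])$ since the pulled-back universal family is $X$. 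For uniqueness, any $F'$ with $F'_*\circ\eta=\eta_Y$ must satisfy $F'\circ(W\to\augmod)=\eta_Y(W)([X_W/W])=F\circ(W\to\augmod)$ for every chart $W$, and since $W\to W/G$ is surjective and $W/G\hookrightarrow\augmod$ is an open embedding, $F$ and $F'$ agree on each chart, hence on all of $\augmod$ (reducedness makes agreement on points enough).

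I expect the main obstacle to be the analyticity of the set-theoretic map $f$ in the construction of $\eta$: this is exactly the point at which one must invoke the full strength of the paper — that $\Q_\Gamma$ is a complex manifold with the plumbing charts of Theorem~\ref{chartsthm} and the universal property of Theorem~\ref{mainthm} — together with the local $\Gamma$-markability of an arbitrary proper flat family of stable curves. Once $f$ is known to be analytic, the invariance, descent and gluing used to build $F$, and the uniqueness argument, are formal.
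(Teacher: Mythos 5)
Your proposal is correct and follows essentially the same line of argument as the paper: local $\Gamma$-markability of an arbitrary proper flat family plus the universal property of $\Q_\Gamma$ (Theorem~\ref{mainthm}) to build $\eta$, then $G$-invariance and descent through the finite quotients $W\to W/G\hookrightarrow\augmod$ to build $F$. The only differences are matters of emphasis — you invoke Cartan's quotient theorem and spell out the overlap-compatibility and uniqueness checks that the paper treats as formal consequences of functoriality — so the mathematical content coincides.
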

 \begin{proof}
\noindent{\bf Step 1. The analytic structure of $\augmod$.} 

There is a natural map of topological spaces $\pi_\Gamma:\Q_\Gamma\to\augmod$. The union of the images of $\Q_\Gamma\to\augmod$ over all multicurves $\Gamma\subset S-Z$ covers $\augmod$. Choose a $\zeta\in\augmod$, and let $\widetilde\zeta$ be an inverse image of $\zeta$ in $\Q_\Gamma$. 

The subgroup of the mapping class group of $\mod(S,Z)$ stabilizing $\Gamma$ as a set acts on $\Q_\Gamma$, by precomposition. 
\begin{proposition}
The point $\widetilde\zeta$ has a neighborhood invariant under $\mathrm{Aut}(X_\Gamma(\widetilde\zeta))$. The quotient of that neighborhood by this group of automorphisms maps by a homeomorphism to $\augmod$. 
\end{proposition}
\begin{proof}
This follows from Proposition \ref{MCGSZGprop}.
\end{proof}

\begin{corollary}
This gives $\augmod$ the structure of an analytic orbifold. 
\end{corollary}

This group of automorphisms also operates on the restriction of the universal curve to this open set. This operation is not fixed point free and constructs an ``orbifold family'' over an orbifold base, which will not be a bundle in general. Above $\zeta$, will be the quotient $X_\Gamma(\widetilde \zeta)/\mathrm{Aut}(X_\Gamma(\widetilde\zeta))$. However, there is no proper flat family parametrized by $\augmod$ which is why $\augmod$ is only a coarse moduli space.

\medskip

\noindent{\bf Step 2. The natural transformation $\eta$.} 

Now suppose that $(p:X \to T,\Sigma)$ is a proper flat family of curves of genus $g$ with $n$ sections (with images disjoint from the nodes, as usual).

For each $t\in T$ there is a $\Gamma$-marking by $(S,Z)$, $\phi:(S/\Gamma,Z) \to (X(t),\Sigma(t))$. By Theorem \ref{CoverThm}, $t$ has a neighborhood $V$ such that the family $p_V:p^{-1}(V) \to V$ has a unique $\Gamma$-marking extending $\phi$ (as a proper flat family, of course).  As such, there is a unique analytic mapping $f_V: V \to \Q_\Gamma$ such that $f_V$ is isomorphic (as a $\Gamma$-marked family) to $f^* X_\Gamma$.  The composition with the projection $\pi_\Gamma:\Q_\Gamma \to \augmod$ gives an analytic  map $V \to \augmod$.

Any two markings of $X(t)$ differ by an element of the mapping class group.  As such, different choices of markings lead to the same mapping $V \to \augmod$.  It is then clear that any two such mappings $V_1, V_2 \to \augmod$ agree on $V_1\cap V_2$, so they all fit together to give a well-defined morphism $T \to \augmod$.  This constructs a natural transformation
\[
\eta:{\SC}_{g,n}\to {\Mor(\bullet,\augmod)}.
\]

\noindent{\bf Step 3. The universal property of $\augmod$.} 

Suppose $\eta_Y$ is a natural transformation
\[
\eta_Y:{\SC}_{g,n}\to {\Mor(\bullet,Y)};
\]
we need to construct a map $F_*:\augmod \to Y$ such that Diagram \ref{diag} commutes. Choose $v\in \augmod$, and point $\tilde v\in \Q_\Gamma$ for an appropriate multicurve $\Gamma$ on $S-Z$ such that $\pi_\Gamma(\widetilde v)=v$. There then exists a neighborhood  $V\subset \augmod$ and a component $\widetilde V\subset \Q_\Gamma$ of $\pi_\Gamma^{-1}(V)$ such that $\pi_\Gamma:\widetilde V \to V$ is a finite regular ramified cover, with covering group $G$ the group of automorphisms of $X_\Gamma(\widetilde v)$.

An element $g\in G$ can be viewed as an automorphism 
\[
 [g]: p_\Gamma^{-1}(\widetilde V) \to  p_\Gamma^{-1}(\widetilde V).
\] 
The natural transformation associates to the family $p_\Gamma^{-1}(\widetilde V)\to \widetilde V$ a map $\widetilde V \to Y$, and since the family of curves
\[
[g]\circ p_\Gamma^{-1}(\widetilde V)\to \widetilde V
\]
is an isomorphic family, it associates to it the same map $\widetilde V \to Y$.  Thus the map $\widetilde V \to Y$ is invariant under the group $G$, and induces a morphism $V\to Y$.

The entire construction is functorial, so maps on different subsets $V\subset \augmod$ coincide, and fit together to define a mapping $F_*:\augmod \to Y$.
\end{proof}

\section{Comparing $\augmod$ and $\DMC$}\label{DMC}

Let $S$ be a compact oriented surface of genus $g$,  and $Z\subset S$ a finite subset. Then the  points of $\augmod$ and $\DMC$ correspond exactly to the isomorphism classes of stable curves, analytic and algebraic respectively.  But all compact analytic curves have a unique algebraic structure by the Riemann existence theorem.  As such there is a unique set-theoretic map $F:\DMC\to \augmod$ such that if $t\in \DMC$ corresponds to an algebraic stable curve $X_t$, then $F(t)$ corresponds to the underlying analytic curve $X_t^{an}$.  The map $F$ is obviously bijective.

\begin{theorem}\label{finally} The map $F$ is induced by an analytic isomorphism 
\[
\DMC^{an} \to \augmod.
\]
\end{theorem}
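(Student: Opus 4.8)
The plan is to show that $F$ and $F^{-1}$ are both analytic; since $F$ is already bijective, this gives the desired analytic isomorphism. The natural approach is to use the universal properties on each side. On the Deligne--Mumford side, $\DMC^{an}$ carries the underlying analytic structure of a coarse moduli space for the algebraic stable curves functor, and by GAGA-type comparison (Riemann existence plus the fact that flat proper families of algebraic stable curves over a base of finite type correspond to flat proper analytic families of stable curves) $\DMC^{an}$ is a coarse moduli space for $\SC_{g,n}$ in the analytic category --- i.e.\ it satisfies the universal property of Theorem~\ref{Thm} with $\augmod$ replaced by $\DMC^{an}$. But a coarse moduli space is unique up to unique isomorphism: applying the universal property of $\augmod$ (Theorem~\ref{Thm}) to the natural transformation $\SC_{g,n}\to\Mor(\bullet,\DMC^{an})$ produces a unique analytic morphism $\augmod\to\DMC^{an}$, and applying the universal property of $\DMC^{an}$ to $\eta:\SC_{g,n}\to\Mor(\bullet,\augmod)$ produces a unique analytic morphism $\DMC^{an}\to\augmod$; the two compositions are endomorphisms compatible with the natural transformations, hence by uniqueness they are the respective identities. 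It remains to check that the analytic morphism $\DMC^{an}\to\augmod$ so produced is exactly the set-theoretic map $F$, which is immediate from tracking what $\eta$ does on points: a single stable curve $X_t$ (as a one-point family) is sent by $\eta$ to the point of $\augmod$ representing $X_t^{an}$.

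First I would make precise the claim that $\DMC^{an}$ is a coarse moduli space for $\SC_{g,n}$. This has two ingredients. The analytic-to-algebraic direction: given a flat proper analytic family of stable curves over an analytic space $A$, one needs an induced analytic map $A\to\DMC^{an}$; locally $A$ is a germ of an analytic space, the family is (by the standard deformation theory of nodal curves and Artin/Schlessinger, as cited in \cite{schlessinger}) induced from the algebraic universal deformation, which maps to $\DMC$, and these local maps glue because the target is a coarse space so the map on points is forced. The algebraic-to-analytic direction, needed to see that $\DMC^{an}$ receives a map and not just emits one, is that every analytic stable curve is algebraic with a unique algebraic structure (Riemann existence for the normalization, plus the node is already algebraic), and analytic families are analytifications of algebraic ones locally; this is precisely where the coarseness (rather than fineness) is essential, since there is no global universal family.

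I expect the main obstacle to be the careful verification that the local algebraizations in the analytic-to-algebraic direction patch into a genuine analytic morphism $A\to\DMC^{an}$ --- that is, that the analytic structure on $\DMC$ obtained from its algebraic structure really does make it represent the analytic functor in the coarse sense, with all the bookkeeping about the $\aut$-actions (the orbifold/stack issue) handled correctly. Equivalently, one must be sure the universal property stated in Theorem~\ref{Thm} is literally the same universal property that $\DMC^{an}$ inherits; the content is matching up ``coarse moduli space for stable curves in the analytic category'' as defined via the functor $\SC_{g,n}$ with the classical statement that $\DMC$ coarsely represents the algebraic stable-curves functor. Once that identification is in place, the isomorphism $F$ is a formal consequence of the uniqueness of objects representing the same functor, and there is nothing further to compute. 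I would also remark that injectivity and surjectivity of $F$ are already known (stated above), so only analyticity of $F$ and $F^{-1}$ is at issue, and both follow from the two morphisms produced by the universal properties being mutually inverse.
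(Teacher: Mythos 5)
Your strategy is genuinely different from the paper's, and it has a gap that your own proposal flags but does not close.

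Your plan is: (a) establish that $\DMC^{an}$ is a coarse moduli space for the analytic stable-curves functor $\SC_{g,n}$; (b) invoke uniqueness of coarse moduli spaces to produce mutually inverse analytic morphisms $\DMC^{an}\to\augmod$ and $\augmod\to\DMC^{an}$; (c) check on points that the result is $F$. Steps (b) and (c) are formal. But step (a) is precisely the content of the corollary that the paper \emph{deduces from} Theorem~\ref{finally}, and it is not a soft consequence of the algebraic coarse moduli property of $\DMC$. The algebraic statement only gives you a classifying map for families over schemes; to upgrade this to \emph{all} complex analytic bases you need a genuine argument, and the sketch you give (Artin/Schlessinger deformation theory, patch local algebraizations) is where the real work lies. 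Making it rigorous essentially forces you either into the machinery of analytic Deligne--Mumford stacks and the compatibility of coarse spaces with analytification, or into exactly the device the paper uses. So as written, your proposal reverses the paper's logical order and, in the paper's own setup, would be circular; as an independent argument it leaves the hard step as an acknowledged but unresolved ``obstacle.''

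The paper's proof sidesteps the coarse-moduli question entirely by working one level up: by Looijenga's theorem there is a \emph{fine} algebraic moduli space $\widetilde M_{g,n}$ (stable curves with Prym level structure) carrying an honest universal family, together with a finite Galois cover $\pi:\widetilde M_{g,n}\to\DMC$ with group $G$. Locally, that universal family admits a $G$-invariant $\Gamma$-marking, so the universal property of $\Q_\Gamma$ (Theorem~\ref{mainthm}) gives an analytic classifying map $f:\widetilde U\to\Q_\Gamma$ with open image, and both $\DMC^{an}$ and $\augmod$ are locally the quotient of the source and the target by the same finite group $G$. Descending $f$ gives the local analytic isomorphism. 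No coarse-representability statement for $\DMC^{an}$ over arbitrary analytic bases is ever needed. If you want to salvage your approach, you would first have to prove the corollary independently, which in practice means redoing the Looijenga-cover argument anyway; so the paper's order of proof is the more economical one.
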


\begin{proof} Since $F$ is globally defined, it is enough to prove that it is locally an analytic isomorphism.  

According to \cite{looijenga} and \cite{ACG2}, there exists an algebraic manifold $\widetilde M_{g,n}$, a Galois covering map 
\[
\pi:\widetilde M_{g,n} \to \DMC,
\]
and a proper flat family of stable curves $\widetilde p_{g,n}:\widetilde X_{g,n}\to \widetilde M_{g,n}$. Denote by $G$ the (finite) Galois group. This family represents the functor of stable curves with Prym structure (of some appropriate level).

Choose $t\in \DMC$, and a neighborhood $U$ of $t$ in $\DMC^{an}$ such that above $\widetilde U:= \widetilde p_{g,n}^{-1}(U)$ the family $\widetilde X_{g,n}$ has a $\Gamma$-marking invariant under $G$ for an appropriate multicurve $\Gamma$ on $S-Z$ (see Theorem \ref{CoverThm}). 

By the universal property of $\Q_\Gamma$, there exists an analytic mapping $f:\widetilde U \to \Q_\Gamma$ that classifies $\widetilde X_{g,n}$ with this marking.  The image of $f$ is open.  Moreover, the image of $f$ is invariant under a subgroup of $\mod(S,Z)$ isomorphic to $G$. Since both $\DMC^{an}$ and $\augmod$ are isomorphic to the quotients by $G$, we see that $f$ induces an isomorphism from $U$ to an open subset of $\augmod$.
\end{proof}
As a consequence of Theorem \ref{finally}, we have the following universal property of $\DMC$ in the analytic category, (see Remark \ref{hithere}).
\begin{corollary}
The analytic space $\DMC^{an}$ is a coarse moduli space for the stable curves functor in the analytic category.
\end{corollary}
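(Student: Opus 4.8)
The plan is to deduce this immediately by transporting the coarse moduli structure of $\augmod$ across the analytic isomorphism $F\colon\DMC^{an}\to\augmod$ of Theorem~\ref{finally}. Recall that in the sense used in this paper a \emph{coarse moduli space for $\SC_{g,n}$} is an analytic space $M$ together with a natural transformation $\SC_{g,n}\to\Mor(\bullet,M)$ that is universal as in Diagram~\ref{diag}; by Theorem~\ref{Thm} the pair $(\augmod,\eta)$ is such an object, and by the remark opening Section~\ref{DMC} the points of $\DMC^{an}$ already correspond bijectively to isomorphism classes of analytic stable curves, so only the universal property remains to be checked for $\DMC^{an}$.

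First I would set $\eta':=(F^{-1})_{*}\circ\eta\colon\SC_{g,n}\to\Mor(\bullet,\DMC^{an})$, which is a natural transformation because $\Mor(\bullet,-)$ is functorial and $F^{-1}$ is a morphism of analytic spaces. Given any analytic space $Y$ with a natural transformation $\eta_Y\colon\SC_{g,n}\to\Mor(\bullet,Y)$, Theorem~\ref{Thm} applied to $\augmod$ yields a unique morphism $G\colon\augmod\to Y$ with $G_{*}\circ\eta=\eta_Y$; put $F^{\flat}:=G\circ F\colon\DMC^{an}\to Y$. Then $F^{\flat}_{*}\circ\eta'=G_{*}\circ F_{*}\circ(F^{-1})_{*}\circ\eta=G_{*}\circ\eta=\eta_Y$, so $F^{\flat}$ makes the required diagram commute. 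For uniqueness, if $F_1,F_2\colon\DMC^{an}\to Y$ both satisfy $(F_j)_{*}\circ\eta'=\eta_Y$, then $(F_j\circ F^{-1})_{*}\circ\eta=\eta_Y$, whence $F_1\circ F^{-1}=F_2\circ F^{-1}$ by the uniqueness clause of Theorem~\ref{Thm}, and therefore $F_1=F_2$ since $F$ is an isomorphism. This establishes that $(\DMC^{an},\eta')$ has the universal property, i.e.\ is a coarse moduli space for the stable curves functor in the analytic category.

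The argument is purely formal, so there is no genuine obstacle; all the substance has been placed in Theorems~\ref{finally} and~\ref{Thm}, and what remains is only the bookkeeping of composing the correct natural transformations and morphisms. The one point worth stating explicitly is that ``coarse moduli space'' is understood here in the functor-of-points sense of Section~\ref{coarse}, and that a universal property of this kind is preserved under post-composition with an isomorphism of analytic spaces — which is exactly what $F$ provides.
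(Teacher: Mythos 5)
Your argument is correct and is precisely the implicit content of the paper's one-line deduction "As a consequence of Theorem \ref{finally}, we have the following universal property of $\DMC$": the coarse moduli property, being a universal property stated in terms of $\Mor(\bullet,-)$, is transported along the analytic isomorphism $F$ by the purely formal bookkeeping you carry out. No differences in approach to report.
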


\section*{Appendix: The geometric coordinates of Earle and Marden} 

This appendix refers mainly to work of C. Earle and A. Marden in \cite{cliff}. One might also see the following works: \cite{wentworth}, \cite{masur}, \cite{scott90}, \cite{scott03}, \cite{lsy}, \cite{lsy2}, and \cite{yamada}.

In \cite{cliff}, C. Earle and A. Marden have an alternative approach to the construction of $\Q_\Gamma$ which is quite different from ours: it is based on Kleinian groups and quasiconformal techniques. However, the two constructions lead to the same space. In this section, we give a summary of their construction in our language, and prove that their space is the same as ours, using the universal property of our $\Q_\Gamma$ (Theorem \ref{mainthm}). 

Let $(S,Z)$ be a topological surface, and let $\Gamma\subset S-Z$ be a multicurve.  Choose a point $u_0\in \S_\Gamma$, and find a group $G$ such that regular set can be written as
\[
\Omega(G)=\Omega^+(G)\cup\Omega^-(G)
\]
where $\Omega^-(G)$ is connected and simply connected, and such that the quotient $\Omega^-(G)/G$ represents a fixed point in Teichm\"uller space of $(S^*,Z)$, and $\Omega^+(G)/G$ represents the point $u_0$. Here $S^*$ denotes the conjugate surface of $S$. 

Such groups exist in the boundary of the Bers slice in the space of quasi-Fuchsian groups based on $(S,Z)$, by putting a Beltrami form on the varying component to squeeze the curves of $\Gamma$ down to nodes. The limit of this squeezing exists by the compactness of the Bers slice. Such groups can also be constructed directly by the combination theorems of Maskit \cite{maskit}. 

Further, for $u$ in a neighborhood $U\subseteq \S_\Gamma$ of $u_0$, choose a family of $G$-invariant Beltrami forms $\mu_u$ on $\Omega^-(G)$ such that $\Omega^-(G)(\mu_u)/G$ represents $u$. We can choose the Beltrami forms $\mu_u$ to depend analytically on $u$. 

Each $\gamma\in\Gamma$ corresponds to a conjugacy class of parabolic elements of $G$ as does each element of $Z\subset S$. Let $G^\gamma$ be a conjugate of $G$ in $\mathrm{PSL}(2,\C)$, putting a fixed point of some element of the conjugacy class corresponding to $\gamma$ at $\infty$, and conjugating that parabolic to $z\mapsto z+1$. Denote the conjugating map as $\varphi_\gamma:G\to G^\gamma$. Using the Beltrami forms $\mu_u$, we can similarly construct $G^\gamma (u)$, with parabolic fixed points at $\infty$, depending analytically on $u$. 

The spaces $\Omega^-(G^\gamma(u))$ fit together to form a smooth family of curves parametrized by $u$. This family is not proper; it has pairs of punctures corresponding to $\gamma$, and it has punctures corresponding to elements of $Z$. It can be made canonically into a proper family by filling in these punctures and identifying in pairs the points corresponding to elements of $\gamma$. Doing this for all $\gamma\in\Gamma$ constructs a proper flat family of stable curves isomorphic to $X_\Gamma^\Gamma$ (see Section \ref{Xgamma}). 

The nodes are opened as follows. There exists a $R>0$ so that the limit sets of all of $G^\gamma(u)$ are contained in $\{|\mathrm{Im}(z)|<R\}$. For each fixed $\gamma\in\Gamma$, define $<G^\gamma(u),\tau_\gamma>$ to be the group generated by  $G^\gamma(u)$ and the translation $z\mapsto z+\tau_\gamma$ where $\mathrm{Im}(\tau_\gamma)>R$. By a theorem of Maskit \cite{maskit}, this group is discrete. The group 
\[
\varphi_\gamma^{-1}\left(<G^\gamma(u),\tau_\gamma>\right)
\]
gives an enrichment of the group $G(u)$ by some parabolic element $\widetilde\tau_\gamma$. Doing this construction for all $\gamma\in\Gamma$ yields a family of groups $<G(u),\widetilde\tau_\gamma,\gamma\in\Gamma>$. Since each of the groups $G^\gamma(u)$ contains $z\mapsto z+1$, $<G^\gamma(u),\tau_\gamma>$  depends only on $u$ and $t_\gamma:=e^{2\pi i \tau_\gamma}$ where $t_\gamma$ lives in a disk of radius $\rho:=e^{-2\pi R}$.

We are going to have to give a description of the limit sets of these groups. The limit sets consist of bands, well-separated when $R$ is large and invariant by $z\mapsto z+1$. Although the bands themselves contain further bands, the region between the bands contain copies of the limit set of $G(u)$, spaced $\tau_\gamma$ apart. The region between corresponds to annuli associated to the element $\gamma$. 

The quotient of $\Omega^-(<G(u),\widetilde\tau_\gamma,\gamma\in\Gamma>)/G$ gives a family of smooth Riemann surfaces if $t_\gamma\neq 0$ (without any nodes). Essentially by the same plumbing construction as ours, Earle and Marden construct a family parametrized by $U\times(\mathbb{D}_\rho)^\Gamma$. 

Moreover, this family is $\Gamma$-marked. It is the passage from $\tau_\gamma$ to $t_\gamma$ where one loses information about Dehn twists. So by the universal property of $\Q_\Gamma$ (Theorem \ref{mainthm}), there is an analytic map of $U\times (\mathbb{D}_\rho)^\Gamma$ to $\mathcal{Q}_\Gamma$. Since the tangent space to $U\times (\mathbb{D}_\rho)^\Gamma$ is the same as the tangent space to $\mathcal{P}_\Gamma$, the lift of this map to $\mathcal{P}_\Gamma$ is an isomorphism in a neighborhood of $u$.

\nocite{cliff}\nocite{modcurves}
\nocite{scott}
\nocite{davideps}
\nocite{masur}
\nocite{teichbook}
\nocite{bers1}
\nocite{bers2}
\nocite{hartshorne}
\nocite{DM}
\nocite{germanguy}
\nocite{handbook}
\nocite{scott_hypmetric}
\nocite{scott_wp}
\nocite{schlessinger}
\nocite{wolfwolpert}
\nocite{grauert}
\nocite{salamon}
\nocite{harvey}
\nocite{maskit}
\nocite{kra}
\nocite{buser}
\nocite{buser}
\nocite{ctm}
\nocite{fay}
\nocite{gardiner}
\nocite{hans}
\nocite{hans2}
\nocite{hejhal}
\nocite{lsy}
\nocite{lsy2}
\nocite{riera}
\nocite{scott03}
\nocite{scott08}
\nocite{scott09}
\nocite{scott10a}
\nocite{scott10b}
\nocite{scott82}
\nocite{scott83}
\nocite{scott90}
\nocite{scott92}
\nocite{wentworth}
\nocite{yamada}
\nocite{scott_cliff}
\nocite{expothurston}
\nocite{abikoff1}
\nocite{abikoff2}
\nocite{abikoff4}
\nocite{harveydiscrete}

\bibliography{DM}{}
\bibliographystyle{abbrv}

\end{document}